\numberwithin{equation}{section}
\renewcommand{\thesubsection}{\arabic{subsection}}
\renewcommand{\p@subsection}{\thesection.}
\newcommand{\zlabel}[1]{\zref@labelbylist{#1}{sections}}
\newcommand{\zref}[2][section]{\hyperref[#2]{\zref@extractdefault{z:#2}{#1}{\textbf{??}}}}
  \let\oldlabel\label
  \renewcommand{\label}[1]{\oldlabel{#1}\zlabel{z:#1}}}
\newtheorem{thm}{Theorem}[section]
\newtheorem{cor}[thm]{Corollary}
\newtheorem{lem}[thm]{Lemma}
\newtheorem{prop}[thm]{Proposition}
\theoremstyle{definition}  }
\newtheorem{rem}[thm]{Remark}}
\theoremstyle{remark} }
\theoremstyle{remark} }
 \newcommand{\RR}{\mathbb{R}}
\newcommand{\CC}{\mathbb{C}} 
\newcommand{\ZZ}{\mathbb{Z}}
\newcommand{\schw}[1]{\mathfrak{S}_{#1}}
\newcommand{\disp}[0]{\displaystyle}
\newcommand{\bracepair}[1]{\left\lbrace #1 \right\rbrace}
\newcommand{\anglepair}[1]{\left\langle #1 \right\rangle}
\newcommand{\parenpair}[1]{\left( #1 \right)}
\newcommand{\vertpair}[1]{\left\vert #1 \right\vert}
\newcommand{\Vertpair}[1]{\left\Vert #1 \right\Vert}
\newcommand{\bd}[1]{\partial (#1 )}
\DeclareMathOperator{\tra}{Trace}
\newcommand{\orderasymp}[1]{O \left( #1 \right)}
\renewcommand{\Re}{\operatorname{Re}}
\renewcommand{\Im}{\operatorname{Im}}
\newcommand{\pointmass}[1]{\delta \left( #1 \right) }
\newcommand{\mysteryletterB}{\kappa}
\newcommand{\mysteryletterC}{\kappa}
\newcommand{\mysteryletterD}{\kappa}
\newcommand{\oddcasesig}{\widetilde{\sigma}}
\newcommand{\evencasesig}{\sigma^{\prime}}
\newcommand{\oddcasesigadjust}{\widetilde{\sigma}^{\prime}}
\newcommand{\declare}{\equiv}
\newcommand{\mysteryconstA}{2\pi}
\newcommand{\substituteseq}{\alpha}
\newcommand{\uptologn}{\sim}
\begin{document}
\title[The Spectrum of a Perturbed Harmonic Oscillator Operator]{The spectrum of a Harmonic Oscillator Operator Perturbed by Point Interactions}
\author{Boris S. Mityagin}
\address{231 West 18th Avenue, The Ohio State University, Columbus, OH 43201}
\email{\url{borismit@math.ohio-state.edu}}

\begin{abstract}
We consider the operator 

\[
  L = - (d/dx)^2 + x^2 y + w(x) y \quad \text{in } L^2(\mathbb{R}), 
\]
where 
\[
w(x) = s \left[ \delta(x - b) - \delta(x + b) \right], \quad b \neq 0 \, \, \text{real}, \quad s \in \mathbb{C}.
\]
This operator has a discrete spectrum: eventually the eigenvalues are simple and
\begin{equation}
\lambda_n = (2n + 1) + s^2\,  \frac{\kappa(n)}{n} + \rho(n) \label{eq:abstractlam}
\end{equation}
where 
\begin{equation}
\kappa(n) = \frac{1}{\mysteryconstA} \left[(-1)^{n + 1} \sin \left( 2 b \sqrt{2n} \right) - \frac{1}{2} \sin \left( 4 b \sqrt{2n} \right) \right]
\end{equation}
and 
\begin{equation}
\vert \rho(n) \vert \leq C \frac{\log n}{n^{3/2}}. \label{eq:abstracterr}
\end{equation}
If $s = i \gamma$, $\gamma$ real, the number $T(\gamma)$ of non-real eigenvalues is finite, and
\begin{equation}
T(\gamma) \leq \left( C (1 + \vert \gamma \vert) \log (e + \vert \gamma \vert) \right)^2.
\end{equation}
The analogue of \eqref{eq:abstractlam}--\eqref{eq:abstracterr} is given in the case of any two-point interaction perturbation 
\[
w(x) = c_+ \pointmass{x - b} + c_- \pointmass{x + b}, \quad c_+, c_- \in \CC. 
\]
\end{abstract}
\maketitle
\normalem

\section{Introduction}
The operator
\[
L^0 = - \, \frac{d^2}{dx^2} + x^2, \quad x \in \RR^1,
\]
is the one-dimensional harmonic oscillator; this is an unbounded self-adjoint operator acting in $L^2(\RR)$.  As one can see in any introductory book on quantum mechanics, $L^0$ has a discrete spectrum $\Lambda^0 = \bracepair{z_n}_{n = 0}^{\infty}$,
\[ z_n = 2n + 1, \quad n = 0, 1, \dotsc \]
and a compact resolvent
\begin{equation} \label{eq:firstresolve}
R^0(z) = (z - L^0)^{-1} , \quad z \not\in \Lambda^0.
\end{equation}
A normalized orthogonal system of eigenfunctions can be chosen as the Hermite functions
\begin{equation} \label{eq:hefcnintro}
h_n(x) = \left( \pi^{1/2} 2^n n! \right)^{-1/2} e^{-x^2/2} H_n(x), \quad n = 0, 1, \dotsc 
\end{equation}
where
\begin{equation} \label{eq:hepolyintro}
H_n(x) = e^{x^2/2} \left(e^{-x^2/2}\right)^{(n)} 
\end{equation}
are Hermite polynomials.

Spectral analysis of perturbed operators
\begin{equation} \label{eq:elldecompintro}
L = L^0 + W 
\end{equation}
with special $W$, in particular, the point interaction peturbations
\begin{equation} \label{eq:wformintro}
Wf = wf, \quad w(x) = \sum_{j = 1}^J c_j \pointmass{x - b_j}, \quad J \text{ finite}
\end{equation}
was studied in many mathematical and physical papers.

S. Fassari, F. Rinaldi and G. Inglese series of papers \cite{FassIng94}, \cite{FassIng97}, \cite{FassRin} investigate the spectrum of $L \in$\eqref{eq:elldecompintro} when the perturbation
\begin{equation} \label{eq:wspecificintro}
W = - \tau \left( \pointmass{x - b} + \pointmass{x + b} \right), \quad \tau, \beta > 0, 
\end{equation}
i.e., $L^0$ is perturbed by a pair of attractive point interactions of equal strength whose centers are situatied at the same distance from the origin.  In this case the operator $L = L^0 + W$ is self-adjoint; the techniques used are based on Green's function analysis.

D. Haag, H. Cartarius, and G. Wunner \cite{HCW}, motivated by analysis of Bose-Einstein condensates with $\mathcal{PT}$-symmetric loss and gain, focused on the case of non-Hermitian perturbations 
\[
W = i \gamma \left[ \pointmass{x - b} - \pointmass{x + b} \right]. 
\]
Their numerical estimates showed that for small $\gamma$ the spectrum of $L = L^0 + W$ is on the real line $\RR$, and they gave some predictions on the state of decay of the disk radii where the eigenvalues of the operator $L$ are located.  Now we provide a rigorous mathematical analysis of the asymptotics of eigenvalues $\lambda_n = \lambda_n(L^0 + W)$.  

We follow the techniques used in \cite{DM06B}, \cite{DM13Diff}, \cite{AdMipub}, \cite{MiSi}, \cite{Elton03}, \cite{Elton04} and based on careful estimates related to the resolvent representation

\begin{gather}\label{eq:rudefintro}
R  = R^0 + \sum_{j = 1}^{\infty} U_j, \\
U_0 = R^0 , \quad U_k = R^0 W U_{k - 1} = U_{k - 1} W R^0, \quad k \geq 0.
\end{gather}
Moreover, we essentially use the property of perturbations $W \in $\eqref{eq:wformintro} to have such a matrix 
\begin{equation}
w_{jk} = \left\langle W h_j, j_k \right\rangle, \quad j, k = 0, 1, \dotsc
\end{equation}
that for some $\alpha > 0$ there exists $M > 0$ such that 
\begin{equation}\label{eq:wcondintro}
\vert w_{jk} \vert \leq \frac{M}{(1 + j)^{\alpha} (1 + k)^{\alpha}}, \quad j, k = 0, 1, \dotsc , 
\end{equation}
Detailed results on the spectrum and convergence of spectral decompositions of $L = L^0 + W$ for a general $W$ under the condition \eqref{eq:wcondintro} were given by B. Mityagin and P. Siegl \cite{MiSi}.  In the case \eqref{eq:wformintro} of the finite point interaction perturbations $\disp \alpha = \frac{1}{4}$.

\nocite{*}

\section{Preliminaries, Technical Introduction, Review the Results} \label{sec:prelims}

\subsection{}  Our main concern is the harmonic oscillator operator \eqref{eq:elldecompintro} and its special perturbation $W$.  We will focus on this case, although many constructions are very general and could be performed in analysis of other differential operators --- see \cite{DM06B}, \cite{AM12}, \cite{MiSi}, \cite{Elton03}.

Let $L^0$ be an operator in $\ell^2(\ZZ_+)$, 
\begin{equation}
L^0 e_k = z_k e_k, \quad z_k = (2k + 1), \quad k = 0, 1, \dotsc,
\label{eq:ell0}
\end{equation} 
and $W = (w_{jk})_0^{\infty}$ a matrix such that for some $\alpha > 0$ and $C_0 > 0$, 
\begin{equation}
\vert w_{jk} \vert \leq \frac{C_0}{(1 + j)^{\alpha} (1 + k)^{\alpha}} \label{eq:wcond}
\end{equation}
Then the quadratic-form method \cite[see][Section VII.6]{ReedSimon} leads to the definition of the closed operator
\begin{equation}
L = L^0 + W \label{eq:elldef}
\end{equation}
with a dense domain --- see details in \cite{MiSi}.  Let us recall some facts, introduce notations and explain a few elementary but important inequalities.

\subsection{}  To adjust our constructions to the set of eigenvalues of the unperturbed operator \eqref{eq:ell0}, let us define strips
\begin{equation}
\begin{aligned}
H_n & = \bracepair{z \in \CC: \vert \Re z - z_n \vert \leq 1}, \quad n \geq 1\\
H_0 & = \bracepair{z \in \CC: \Re z - z_0 \leq 1}
\end{aligned} \label{eq:hns}
\end{equation}
and the squares 
\begin{equation}
\mathcal{D}_n = \bracepair{z \in H_n: \vert \Re z - z_n \vert \leq \frac{1}{2}, \vert \Im z \vert \leq \frac{1}{2}}, \quad n \geq 0  \label{eq:dn}
\end{equation}
around eigenvalues $ \bracepair{z_n}_{n = 1}^{\infty} = \bracepair{2n + 1}_{n = 0}^{\infty}$ in $H_n$.  

The resolvent 
\begin{equation}
R(z) = (z - L^0 - W)^{-1} \label{eq:res}
\end{equation}
of the operator \eqref{eq:elldef} is well-defined in the right half-plane 
\begin{equation}
\bracepair{z: \Re z \geq 2 N_{*}}  \setminus \bigcup_{k = N_*}^{\infty} \mathcal{D}_k \label{eq:resdom}
\end{equation}
outside of the disks $\mathcal{D}_k$, $k \geq N_*$, if $N_*$ is large enough.  

It follows from the Neumann-Riesz decomposition 
\begin{align}
\begin{aligned}
R & = R^0 + R^0 W R^0 + R^0 W R^0 W R^0 + \dotsb\\
& = R^0 + \sum_{j = 1}^{\infty} U_j,
\end{aligned} \label{eq:rdecomp} \\
\intertext{where}
\begin{aligned}
U_0 &= R^0 = (z - L^0)^{-1},\\
U_j & = R^0 W U_{j - 1} = U_{j - 1} W R^0, \quad j \geq 1.
\end{aligned} \label{eq:udef}
\end{align}
Of course, the convergence of the series should be explained at least in \eqref{eq:resdom}.  This is done in \cite{AM12}, \cite{MiSi}; now I'll remind only the estimates of $N_*$ because it will be important later (see Theorem~\ref{thm:eigendistr}, \eqref{eq:nstarnewdef} and Corollary XXX) in accounting for points of the spectrum $\sigma(L)$ outside of the real line.  

\subsection{}
Define a diagonal operator $K$, 
\begin{equation}
K e_j = \frac{1}{\sqrt{z - z_j}} e_j,\, \,  j = 0, 1, \dotsc, \, \, \Im z \neq 0 \label{eq:kdef}
\end{equation}
with understanding that
\begin{equation}
\sqrt{\xi} = r^{1/2} e^{i \varphi / 2} \text{ if } \xi = r e^{i \varphi}, \quad - \pi < \varphi \leq \pi. \label{eq:sqrootdef}
\end{equation}
Then $K^2 = R^0$, $z \in \CC \setminus \RR$; maybe, we lose analyticity but rough estimates -- when just the absolute values of matrix elements work well -- are good enough.  

Indeed, \eqref{eq:rdecomp}, \eqref{eq:udef} could be rewritten as 
\begin{gather}
R^0 = K^2, \quad U_j = K (K W K)^j K, \label{eq:ujrevise}\\
R = R^0 + \sum_{j = 1}^{\infty} K (K W K)^j K,
\end{gather}
where 
\begin{equation}
(K W K)_{k m} = \frac{1}{\sqrt{z - z_k}} W_{km} \frac{1}{\sqrt{z - z_m}},\, \, k, m = 0, 1, 2, \dotsc, \, \, z \in \CC \setminus \RR. \label{eq:kwkform}
\end{equation}

\begin{lem} \label{lem:hs}
Under the assumptions \eqref{eq:ell0}, and \eqref{eq:wcond}, with $\disp 0 < \alpha < \frac{1}{2}$, if $z \in H_n \setminus \mathcal{D}_n$, then $KWK$ is a Hilbert-Schmidt operator, and
\begin{equation}
\ell \declare \Vert K W K \Vert_{\text{HS}} \leq \frac{C_0 M(\alpha) \log (en)}{n^{2\alpha}} \label{eq:HSnorm}, \quad M(\alpha) \declare 6 + \frac{4/3}{1- 2 \alpha} + \frac{1}{3\alpha}
\end{equation}
\end{lem}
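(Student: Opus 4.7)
The plan is to expand the Hilbert-Schmidt norm squared directly from the definition of $KWK$ in \eqref{eq:kwkform}:
\[
\Vert K W K \Vert_{\text{HS}}^2 = \sum_{k, m = 0}^\infty \frac{\vert w_{km} \vert^2}{\vert z - z_k \vert \, \vert z - z_m \vert}.
\]
Applying the decay hypothesis \eqref{eq:wcond} and separating the $k$ and $m$ sums factors this as
\[
\Vert K W K \Vert_{\text{HS}}^2 \leq C_0^2 \left( \sum_{k = 0}^\infty \frac{1}{(1 + k)^{2\alpha} \vert z - z_k \vert} \right)^{2}.
\]
So the proof reduces to showing that the one-variable sum $S(z)$ in parentheses is bounded by $M(\alpha) \log(en) / n^{2\alpha}$, and the lemma then follows on taking square roots.

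To estimate $S(z)$ I would use the geometry of $z \in H_n \setminus \mathcal{D}_n$. Since $\vert \Re z - z_n \vert \leq 1$ on $H_n$ and the eigenvalues $z_k = 2k + 1$ are spaced by $2$, a one-line triangle inequality gives $\vert z - z_k \vert \geq \vert z_n - z_k \vert - 1 = 2 \vert n - k \vert - 1 \geq \vert n - k \vert$ whenever $k \neq n$, while the exclusion of $\mathcal{D}_n$ forces $\vert z - z_n \vert \geq 1/2$. Substituting these lower bounds isolates the diagonal term, which contributes at most $2/(1+n)^{2\alpha}$, and replaces the rest of the sum by $\sum_{k \neq n} (1+k)^{-2\alpha} / \vert n - k \vert$.

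The main step is to split this sum into three regimes according to the size of $k$: small $k$ with $0 \leq k \leq n/2$, intermediate $k$ near $n$ (either $n/2 < k < n$ or $n < k \leq 2n$), and a tail with $k > 2n$. On the small range one replaces $\vert n - k \vert^{-1}$ by its maximum $2/n$ and bounds $\sum (1+k)^{-2\alpha}$ by an elementary integral, producing a contribution of the form $(1 - 2\alpha)^{-1} n^{-2\alpha}$ (this is where the hypothesis $\alpha < 1/2$ is used). On the intermediate range one replaces $(1 + k)^{2\alpha}$ by its size near $n$ and recognizes a partial harmonic sum of length at most $n$, producing the $\log(en) \cdot n^{-2\alpha}$ contribution. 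On the tail one uses $k - n \geq k/2$ to reduce matters to $\sum_{k > 2n} (1+k)^{-1-2\alpha}$, whose integral bound yields $\alpha^{-1} n^{-2\alpha}$. Assembling the three estimates with the diagonal term packs everything into a single constant of the form $M(\alpha) = 6 + \frac{4/3}{1 - 2\alpha} + \frac{1}{3\alpha}$.

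The case-splitting and integral comparisons are routine; the main obstacle will be bookkeeping the absolute constants so as to match the precise numerators $4/3$ in front of the $(1 - 2\alpha)^{-1}$ piece and $1/3$ in front of the $\alpha^{-1}$ piece, and verifying that the non-singular contributions -- the $k = n$ term, the intermediate harmonic sum, and the various factors $2^{2\alpha}$, $4^\alpha$ arising from the splits -- can all be absorbed into the constant $6$. A small boundary issue for the smallest $n$, where $\log(en)$ is only barely larger than $1$, can be handled by observing that for bounded $n$ all the sums are in any case majorized by a fixed constant.
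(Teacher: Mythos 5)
Your plan is essentially the paper's proof: reduce $\Vert KWK\Vert_{\text{HS}}^2$ to $C_0^2\mu^2$ with $\mu = \sum_j (1+j)^{-2\alpha}\vert z - z_j\vert^{-1}$, then estimate $\mu$ by isolating the nearby indices and running an integral comparison on the rest, producing one $(1-2\alpha)^{-1}$ piece, one $\log$ piece, and one $\alpha^{-1}$ piece. The only substantive issue is the one you already flagged: the constants. The paper's proof uses the slightly sharper lower bound $\vert z - z_j\vert \geq 2\vert n - j\vert - 1 \geq \tfrac{3}{2}\vert n - j\vert$ for $\vert n - j\vert \geq 2$; the factor $\tfrac{2}{3}$ this puts in front of everything is exactly where the numerators $\tfrac{4}{3}$ (from $\tfrac{2}{3}\cdot\tfrac{2}{1-2\alpha}$) and $\tfrac{1}{3}$ (from $\tfrac{2}{3}\cdot\tfrac{1}{2\alpha}$) come from. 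Your lower bound $\vert z - z_k\vert\geq\vert n - k\vert$ loses this $\tfrac{2}{3}$ and therefore produces a constant roughly $\tfrac{3}{2}M(\alpha)$, not $M(\alpha)$. That is trivially repaired by using the sharper bound, which your triangle-inequality derivation already gives (you just dropped to the weaker form $2\vert n-k\vert - 1 \geq \vert n-k\vert$). Beyond that, your partition into small/intermediate/tail ranges versus the paper's two integrals $\int_0^{n-1}$ and $\int_{n+2}^\infty$ is a cosmetic difference; either works. One small further point: the paper groups $j = n, n\pm 1$ into the ``diagonal'' part rather than just $j = n$, which marginally simplifies the boundary terms of the integral comparison but is not essential. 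With the sharper lower bound in place, your outline would recover the stated $M(\alpha)$ after the same bookkeeping.
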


\begin{proof}
If $z \in \bd{\mathcal{D}_n}$, i.e, 
\begin{align}
\begin{aligned}
z = (2n + 1) + \xi + i \eta , & & &\quad \vert \xi \vert = \frac{1}{2}, \, \vert \eta \vert \leq \frac{1}{2},\\
&& \text{ or } & \quad \vert \xi \vert \leq \frac{1}{2}, \, \vert \eta \vert = \frac{1}{2}; \quad \xi, \eta \in \RR,
\end{aligned} \label{eq:bdloc}
\end{align}
then
\begin{equation}
\frac{1}{2} \leq \vert z - z_j \vert \leq 3, \, j = n, n \pm 1, \label{eq:zbd}
\end{equation}
and if $\vert n - j \vert \geq 2$,
\begin{equation}
\frac{3}{2} \vert n - j \vert \leq 2 \vert n - j \vert - 1 \leq \vert z - z_j \vert \leq 2 \vert n - j \vert + 1 \leq \frac{5}{2} \vert n - j \vert.  \label{eq:njest}
\end{equation}
Therefore, by \eqref{eq:wcond}, \eqref{eq:kwkform}, 
\begin{equation}
\ell^2 = \sum_{j, k = 1}^{\infty} \frac{\vert w_{jk} \vert^2}{\vert z - z_j \vert \vert z - z_k \vert} \leq C_0^2 \mu^2 \label{eq:nuexpr}
\end{equation}
with
\begin{equation}
\mu = \sum_{j = 0}^{\infty} \frac{1}{(1 + j)^{2 \alpha} \vert z - z_j \vert}. \label{eq:mudef}
\end{equation}
The sum of three terms for $j = n, n \pm 1$ in \eqref{eq:mudef} by \eqref{eq:zbd} does not exceed
\begin{equation}
3 \cdot \frac{1}{n^{2\alpha}} \cdot 2 = \frac{6}{n^{2\alpha}},  \label{eq:smallvarianceterms}
\end{equation}
and by \eqref{eq:njest}, the remaining part of $\mu$, namely, $ \disp \sum_{j = 0}^{n - 2} + \sum_{j = n + 2}^{\infty} $, by the integral test does not exceed 
\begin{equation}
\begin{aligned}
&\frac{2}{3} \left[ \frac{1}{n} + \frac{1}{n^{2\alpha}} + \int_0^{n - 1} \frac{dx}{(1 + x)^{2\alpha} (n - x)} \right]\\
+ & \frac{2}{3} \left[ \frac{1}{2} \cdot \left( \frac{1}{n + 3} \right)^{2\alpha} + \int_{n + 2}^{\infty} \frac{dy}{(1 + y)^{2\alpha} (y - n)} \right].
\end{aligned} \label{eq:bounda}
\end{equation}
The first integral (after the change of variables $x = n \xi$) is 
\begin{equation}
\begin{aligned}
&\frac{1}{n^{2 \alpha}} \int_0^{1 - (1/n)} \frac{n \, d\xi}{n(1 - \xi) \left( \frac{1}{n} + \xi \right)^{2\alpha}} \leq \\
\leq &  \frac{1}{n^{2 \alpha}} \left[ 2 \int_0^{1/2} \frac{d\xi}{\xi^{2 \alpha}} + 2^{2\alpha} \int_{1/2}^{1 - (1/n)} \frac{d\xi}{1 - \xi} \right] =\\
= & \left( \frac{2}{n} \right)^{2\alpha} \left[ \frac{1}{1-2\alpha} + \log \frac{n}{2} \right].
\end{aligned} \label{eq:bounda2.1}
\end{equation} 
The second integral in \eqref{eq:bounda} is equal to 
\begin{equation}
\begin{aligned}
&\frac{1}{n^{2\alpha}} \int_{1 + (2/n)}^{\infty} \frac{d\eta}{(\eta - 1)\left( \frac{1}{n} + \eta \right)^{2\alpha}} \leq \\
\leq &  \frac{1}{n^{2 \alpha}} \left[  \int_{1 + (2/n)}^{2} \frac{d\eta}{\eta - 1} +  \int_{2}^{\infty} \frac{d\eta}{(\eta - 1)^{2\alpha + 1}} \right] =\\
= & \frac{1}{n^{2\alpha}} \left[ \log \frac{n}{2} + \frac{1}{2\alpha} \right].
\end{aligned} \label{eq:bounda2.2}
\end{equation}
If we collect the inequalities \labelcref{eq:smallvarianceterms,eq:bounda,eq:bounda2.1,eq:bounda2.2}, we get (with $2^{2\alpha} \leq 2$)
\begin{align}
&\begin{aligned}
\mu &\leq \frac{2}{3} \frac{1}{n^{2\alpha}} \left[ 9 + \frac{3}{2} + \frac{2}{1 - 2\alpha} + 3 \log \frac{e n}{2} + \frac{1}{2\alpha}  \right] \leq \\
&\leq \frac{1}{n^{2\alpha}} [M(\alpha) + 2 \log n].
\end{aligned} \label{eq:muest}\\
&\text{ where } M(\alpha) = 6 + \frac{4/3}{1- 2 \alpha} + \frac{1}{3\alpha}. \label{eq:malphdef}
\end{align}
With \eqref{eq:nuexpr} and \eqref{eq:wcond} we come to \eqref{eq:HSnorm}.
\end{proof}

Of course, the constant factors in inequalities \eqref{eq:nuexpr} -- \eqref{eq:muest} are not sharp but we get some idea on their magnitude.  If $\alpha = \disp \frac{1}{4}$ we have 
\begin{gather}
M\left( \frac{1}{4} \right) = 6 + \frac{4}{3} \cdot 2 + \frac{2}{3} < 10, \text{ and} \label{eq:alph1qtr1}\\
\mu \leq \frac{2}{\sqrt{n}} (5 + \log n) \label{eq:alph1qtr2}
\end{gather}
This case is important in analysis of the harmonic oscillator and its perturbations \eqref{eq:wformintro}.  The estimates \labelcref{eq:alph1qtr1,eq:alph1qtr2} will be used later as well.  

\begin{rem} \label{rem:sbds}
Let $\displaystyle s \equiv \sum_{\substack{ j = 0 \\ j \neq n }}^{\infty} \frac{1}{(1 + j)^{\beta}} \cdot \frac{1}{\vert n - j \vert}$.  Then
\begin{subequations}
\renewcommand{\theequation}{\theparentequation.\roman{equation}}
\begin{align}
s & \leq \frac{M(\beta)}{n^{\beta}} \log en, && \text{ if } 0 < \beta \leq 1, \label{eq:sbetasmall}\\
s & \leq \frac{M}{n}, && \text{ if } \beta > 1.  \label{eq:sbetalarge}
\end{align} \label{eq:sconds}
\end{subequations}
\end{rem}

\begin{proof}
The case $\beta = 2 \alpha < 1$ is done in the proof of Lemma~\ref{lem:hs}.  Other cases could be explained in the same way; we omit details.  
\end{proof}

\subsection{} By \eqref{eq:kdef} the operator $K$ is bounded if $z \in H_n \setminus \mathcal{D}_n$ and by \eqref{eq:zbd}, \eqref{eq:njest} its norm
\begin{equation}
\Vert K \Vert \leq \sqrt{2}. \label{eq:knorm}
\end{equation}
Therefore, for $U_j \in $ \eqref{eq:ujrevise} if $j \geq 2$ 
\begin{equation}
\Vert U_j \Vert_1 \leq 2 \Vert K W K \Vert_2^j \leq 2 \nu^j \leq 2 \left[ M(\alpha) \frac{\log e n}{n^{2\alpha}} \right]^j. \label{eq:ujest}
\end{equation}
But we can claim that $U_1$ is a trace-class operator as well, and 
\begin{equation}
\Vert U_1 \Vert_1 = \Vert K(KWK) K \Vert_1 \leq \Vert K \Vert_4 \Vert KWK \Vert_2 \Vert K \Vert_4 \label{eq:u1bds}
\end{equation}
because $K \in \schw{4}$ [or any $\schw{p}$, $p > 2$, as a matter of fact]: just notice that by \eqref{eq:zbd}, \eqref{eq:njest}
\begin{equation}
\begin{aligned}
\Vert K \Vert_4^4& = \sum_{j = 0}^{\infty} \frac{1}{\vert z - z_j \vert^2} \leq \\
&\leq  3/4 + 2 \sum_{k = 2}^{\infty} \left( \frac{2}{3} \right)^2 \cdot \frac{1}{k^2} < 20 < \left( \frac{11}{5} \right)^4,
\end{aligned} \label{eq:k4boundset}
\end{equation}
so
\begin{equation}
\Vert K \Vert_4 \leq \frac{11}{5}; \quad \Vert K \Vert_4^2 \leq 5. \label{eq:k4bound}
\end{equation}   
Therefore we can claim the following.
\begin{prop} \label{prop:normbds}
Under the assumptions \eqref{eq:ell0}, \eqref{eq:wcond}, $0 < \alpha < \frac{1}{2}$, suppose that $N_{*} = N_*(\alpha)$ is chosen in such a way that  
\begin{equation}
M(\alpha) \frac{\log en}{n^{2\alpha}} \leq \frac{1}{2} \quad \text{ for all } \quad n \geq N_*. \label{eq:nstardef}
\end{equation}
Then for $n > N_*(\alpha)$ if $z \in \bd{\mathcal{D}_n}$ all the operators $U_j \in $ \eqref{eq:rdecomp} are of the trace class, their norms satisfy inequalities 
\begin{align}
\Vert U_j \Vert_1 &\leq 2 \left[ M(\alpha) \frac{\log en}{n^{2\alpha}}\right]^j, \quad j \geq 2, \label{eq:ujnorm}\\
\Vert U_1 \Vert_1 & = \Vert R^0 W R^0 \Vert_1 \leq \frac{5 M(\alpha) \log en}{n^{2\alpha}} \label{eq:u1norm}
\end{align}
and the Neumann - Riesz series for the difference of two resolvents
\begin{align}
R - R^0 &= \sum_{j = 1}^{\infty} U_j \label{eq:rdiff}\\
\intertext{converges by the $\schw{1}$-norm and} 
\Vert R - R^0 \Vert_1 &\leq 7 M(\alpha) \frac{\log en}{n^{2\alpha}}\,\label{eq:rdiffnorm}\\
\intertext{and}
\Vertpair{\sum_{j = m}^{\infty} U_j}_1 & \leq 4 \left(  M(\alpha) \frac{\log en}{n^{2\alpha}} \right)^m,\quad m \geq 2. \label{eq:ujsumnorm}
\end{align}
\end{prop}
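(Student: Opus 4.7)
The plan is simply to assemble the pieces already in place. Abbreviate $\nu \equiv M(\alpha) \log(en)/n^{2\alpha}$, so that Lemma~\ref{lem:hs} reads $\Vert KWK \Vert_2 \leq \nu$ (absorbing $C_0$ into the constants), and by the choice \eqref{eq:nstardef} of $N_*$ we have $\nu \leq \tfrac12$ throughout $n \geq N_*$. All four displayed inequalities then follow from the factorization $U_j = K(KWK)^j K$ of \eqref{eq:ujrevise} together with H\"older's inequality for Schatten norms.

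For $j \geq 2$ the middle factor $(KWK)^j$ is a product of at least two Hilbert--Schmidt operators and so lies in $\schw{1}$ with $\Vert(KWK)^j\Vert_1 \leq \Vert KWK \Vert_2^j \leq \nu^j$. Sandwiching by the bounded $K$ (with $\Vert K \Vert \leq \sqrt{2}$ from \eqref{eq:knorm}) yields $\Vert U_j \Vert_1 \leq 2\nu^j$, which is \eqref{eq:ujnorm}. The trickier case is $j = 1$, where $KWK$ is merely Hilbert--Schmidt: I would write $U_1 = K \cdot (KWK) \cdot K$ with each outer $K$ in $\schw{4}$, apply H\"older with exponents $\tfrac14 + \tfrac12 + \tfrac14 = 1$ as in \eqref{eq:u1bds}, and then invoke the explicit computation $\Vert K \Vert_4^2 \leq 5$ of \eqref{eq:k4boundset}--\eqref{eq:k4bound} to conclude $\Vert U_1 \Vert_1 \leq 5\nu$, establishing \eqref{eq:u1norm}.

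The remaining assertions are geometric-series bookkeeping. Summing, $\Vert R - R^0 \Vert_1 \leq 5\nu + \sum_{j \geq 2} 2\nu^j = 5\nu + 2\nu^2/(1-\nu)$; since $\nu \leq \tfrac12$ this is at most $5\nu + 4\nu^2 \leq 5\nu + 2\nu = 7\nu$, yielding \eqref{eq:rdiffnorm}. Absolute convergence in $\schw{1}$ then also delivers convergence of the Neumann--Riesz series in the trace norm. The tail bound \eqref{eq:ujsumnorm} is analogous: for $m \geq 2$, $\bigl\Vert \sum_{j \geq m} U_j \bigr\Vert_1 \leq 2\nu^m/(1-\nu) \leq 4\nu^m$.

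There is no genuine obstacle; each ingredient has been prepared in the preceding subsections. The only point that is not purely mechanical is the separate handling of $U_1$, since one cannot bound $\Vert U_1 \Vert_1$ by $\Vert K \Vert^2 \Vert KWK \Vert_2$ (a single Hilbert--Schmidt operator need not be trace class). The fact that $K \in \schw{4}$ together with the concrete estimate $\Vert K \Vert_4 \leq 11/5$ from \eqref{eq:k4boundset} is precisely what bridges this gap and is what drives the coefficient $5$ (rather than $2$) in \eqref{eq:u1norm}, and thus the constant $7$ in \eqref{eq:rdiffnorm}.
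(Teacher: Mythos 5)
Your proposal is correct and follows the paper's own route step for step: the factorization $U_j = K(KWK)^jK$, the Schatten--H\"older split $\schw{4}\cdot\schw{2}\cdot\schw{4}$ for $j=1$ backed by the explicit bound $\Vert K\Vert_4^2 \leq 5$, and the geometric-series bookkeeping using $\nu \leq \tfrac12$. Your closing remark correctly identifies the one non-mechanical point (a single Hilbert--Schmidt factor is not trace class, hence the $\schw{4}$ device for $U_1$), which is exactly the role the paper assigns to \eqref{eq:u1bds} and \eqref{eq:k4boundset}--\eqref{eq:k4bound}.
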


\begin{proof}
Inequality \eqref{eq:ujnorm} is identical with (proven) line \eqref{eq:ujest}.  \eqref{eq:u1norm} come if we combine \eqref{eq:k4boundset}, \eqref{eq:u1bds}, and \eqref{eq:HSnorm}.  Therefore, for $m \geq 2$, by \eqref{eq:ujnorm} and \eqref{eq:nstardef},
\begin{equation}
\begin{aligned}
\Vertpair{\sum_{j = m}^{\infty} U_j}_1 & \leq 2 \sum_{j = m}^{\infty} \left(  M(\alpha) \frac{\log en}{n^{2\alpha}} \right)^j \leq \\
& \leq 4 \left(  M(\alpha) \frac{\log en}{n^{2\alpha}} \right)^m.
\end{aligned} \label{eq:umanynorm}
\end{equation}
Then by \eqref{eq:u1norm}
\begin{equation}
\begin{aligned}
\Vert R - R^0 \Vert_1 & \leq \Vert U_1 \Vert_1 + \Vertpair{\sum_{j = 2}^{\infty} U_j}_1 \leq \\
& \leq   M(\alpha) \frac{\log en}{n^{2\alpha}}  \cdot \left(5 + 4 M(\alpha) \frac{\log en}{n^{2\alpha}} \right)\\ 
& \leq  7M(\alpha) \frac{\log en}{n^{2\alpha}}.
\end{aligned} \label{eq:rdiffnormest}
\end{equation}
\end{proof}

\section{Deviations of eigenvalues of the harmonic oscillator operator and its perturbations} \label{sec:evalperturb}
\subsection{}Although the constructions and methods of this section are general and applicable to many operators with discrete spectrum and their perturbations, we'll focus later in this section on the case of Harmonic Oscillator operator \eqref{eq:ell0} and its functional representation
\begin{equation}
L^0 y = - y^{\prime \prime} + x^2 y \label{eq:ell0reit}
\end{equation} 
in $L^2(\RR)$.  

The Riesz-Neumann Series \eqref{eq:rdiff}, \labelcref{eq:rdecomp,eq:udef} --- as soon as its convergence in $\schw{1}$ is properly justified --- can be used to evaluate eigenvalues of the operator $L = L^0 + W$.  

Under proper conditions, if $n \geq N_*$, the operator $L$ has the only eigenvalue $\lambda_n$ in $H_n$; moreover, $\lambda_n$ is simple and $\lambda_n \in \mathcal{D}_n$.  Therefore, both of the projections
\begin{align}
P_n^0 & = \frac{1}{2\pi i} \int\limits_{\bd{\mathcal{D}_n}} R^0(z) \, dz = \anglepair{\cdot, h_n} h_n \label{eq:pnbase}\\
\intertext{and}
P_n   & = \frac{1}{2\pi i} \int\limits_{\bd{\mathcal{D}_n}} R(z) \, dz = \anglepair{\cdot, \psi_n} \phi_n \label{eq:pn}
\end{align} 
are of rank $1$.  [In \eqref{eq:pn}, $\phi_n$ is an eigenfunction of $L$ and $\psi_n$ is an eigenfunction of $L^* = L^0 + W^*$, with an eigenvalue $\mu_n = \overline{\lambda}_n$ in $\mathcal{D}_n$.  We will not use this specific information so nothing more is explained now.]

Therefore, 
\begin{align}
\tra P_n^0 = \tra P_n &= 1, \label{eq:trabase}\\
\tra \frac{1}{2\pi i} \int\limits_{\bd{\mathcal{D}_n}} (R(z) - R^0(z)) \, dz &= 0. \label{eq:trafull}
\end{align}
and 
\begin{align}
z_n & = \tra \frac{1}{2\pi i} \int\limits_{\bd{\mathcal{D}_n}} z R^0(z) \, dz = 2n + 1, \label{eq:lambase}\\
\lambda_n & = \tra \frac{1}{2\pi i} \int\limits_{\bd{\mathcal{D}_n}} z R(z) \, dz, \label{eq:lamfull}
\end{align}
So \labelcref{eq:trabase,eq:trafull,eq:lambase,eq:lamfull} imply 
\begin{equation}
\begin{aligned}
\lambda_n - z_n &= \tra \frac{1}{2\pi i} \int\limits_{\bd{\mathcal{D}_n}} (z - z_n) [R(z) - R^0(z)] \, dz\\
& = \sum_{j = 1}^{\infty} T_j(n)
\end{aligned} \label{eq:lamdiff}
\end{equation}
where we put [with $z_n = \lambda_n^0 = 2n + 1$]
\begin{equation}
T_j(n) = T_j(n; W) = \frac{1}{2\pi i} \tra \int\limits_{\bd{\mathcal{D}_n}} (z - z_n) U_j(z) \, dz \label{eq:tjdef}
\end{equation}
Proposition~\ref{prop:normbds} is used in \eqref{eq:lamdiff}, \eqref{eq:tjdef}. \emph{Trace} is a linear bounded functional of norm $1$, on the space $\schw{1}$ of trace-class operators.  It implies the following.
\begin{cor} \label{cor:tjnorm}
Under the assumptions of Proposition~\ref{prop:normbds}, with $n \geq N_*$, we have
\begin{align}
\vert T_j(n) \vert & \leq \left[ M(\alpha) \frac{\log en}{n^{2\alpha}}\right]^j, \quad j \geq 2 \label{eq:tjnorm}
\intertext{and}
\vert T_1(n) \vert & \leq  \frac{9}{4} M(\alpha) \frac{\log en}{n^{2\alpha}} \label{eq:tjnorm.b}
\end{align}
\end{cor}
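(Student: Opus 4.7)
The plan is to use the trace norm bounds of Proposition~\ref{prop:normbds} directly on the contour integral defining $T_j(n)$, exploiting that $\tra$ is a bounded linear functional of norm one on $\schw{1}$. For each $z \in \bd{\mathcal{D}_n}$, the operator $U_j(z)$ belongs to $\schw{1}$, the map $z \mapsto U_j(z)$ is continuous into $\schw{1}$, and so the integrand in \eqref{eq:tjdef} is $\schw{1}$-Bochner integrable. Pulling the trace inside the integral and then applying the triangle inequality for Bochner integrals together with $|\tra A| \le \Vert A \Vert_1$ yields
\[
\vert T_j(n) \vert \le \frac{1}{2\pi} \int_{\bd{\mathcal{D}_n}} \vert z - z_n \vert \, \Vert U_j(z) \Vert_1 \, \vert dz \vert .
\]

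The contour $\bd{\mathcal{D}_n}$ is a square of side length $1$, hence of total length $4$, on which $\vert z - z_n \vert \leq 1/\sqrt{2}$. For $j \ge 2$, one inserts the bound $\Vert U_j(z) \Vert_1 \le 2[M(\alpha) \log(en)/n^{2\alpha}]^j$ from \eqref{eq:ujnorm} and replaces both $\vert z - z_n \vert$ and the length by these crude maxima. The resulting prefactor is $\frac{4 \cdot 2}{2\pi\sqrt{2}} = \frac{2\sqrt{2}}{\pi} < 1$, which immediately gives \eqref{eq:tjnorm}.

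The $j = 1$ case is essentially the only step requiring care, because combining $\Vert U_1 \Vert_1 \le 5M(\alpha)\log(en)/n^{2\alpha}$ with the crude maxima $\vert z - z_n \vert \le 1/\sqrt{2}$ and length $4$ produces a prefactor $5\sqrt{2}/\pi \approx 2.251$, which very slightly exceeds $9/4$. To close this gap one evaluates the integral of $\vert z - z_n \vert$ over $\bd{\mathcal{D}_n}$ exactly: parametrizing each side and using $\int_{-1/2}^{1/2} \sqrt{1/4 + t^2}\, dt = \tfrac{1}{4}(\sqrt{2} + \ln(1+\sqrt{2}))$, one finds
\[
\int_{\bd{\mathcal{D}_n}} \vert z - z_n \vert \, \vert dz \vert = \sqrt{2} + \ln(1 + \sqrt{2}) .
\]
The numerical inequality $5(\sqrt{2} + \ln(1+\sqrt{2})) \le 9\pi/2$ (the left side is about $11.48$, the right about $14.14$) then gives \eqref{eq:tjnorm.b}.

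The only real obstacle is therefore tracking the constants tightly enough in the $j=1$ case: the estimate is otherwise an immediate consequence of Proposition~\ref{prop:normbds} and the trivial trace-norm bound $|\tra| \le \|\cdot\|_1$. No new analytic input is needed.
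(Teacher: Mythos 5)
Your proof follows exactly the approach the paper intends: pull the trace inside the contour integral, use $\vert\tra A\vert\le\Vert A\Vert_1$, and feed in the trace-norm bounds \eqref{eq:ujnorm}, \eqref{eq:u1norm} together with $\vert z-z_n\vert\le1/\sqrt2$ on $\bd{\mathcal{D}_n}$ and length $4$. You are also right to flag that the crude $j=1$ estimate, $\frac{1}{2\pi}\cdot 4\cdot\frac{1}{\sqrt2}\cdot 5=\frac{5\sqrt2}{\pi}\approx 2.2508$, actually overshoots $9/4=2.25$ by a hair; the paper's one-line proof glosses over this, and your exact evaluation $\int_{\bd{\mathcal{D}_n}}\vert z-z_n\vert\,\vert dz\vert=\sqrt2+\ln(1+\sqrt2)\approx 2.296$, giving the prefactor $\frac{5}{2\pi}\bigl(\sqrt2+\ln(1+\sqrt2)\bigr)\approx 1.83<9/4$, is a correct and welcome repair of that constant. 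Same method, but your version is tighter and actually justifies the stated $9/4$.
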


\begin{proof}
With $\vert \tra A \vert \leq \Vert A \Vert_1$ and 
\begin{equation}
\begin{aligned}
\vert z - z_n \vert \leq \frac{1}{\sqrt{2}},& \quad  z \in \bd{\mathcal{D}_n},\\
\text{length}(\mathcal{D}_n) = 4
\end{aligned} \label{eq:data2}
\end{equation}
rough estimates of integrals \eqref{eq:tjdef} with $j \geq 2$ and $j = 1$ based on \eqref{eq:ujnorm} and \eqref{eq:u1norm} lead to \eqref{eq:tjnorm} and \eqref{eq:tjnorm.b}.
\end{proof}

\begin{cor} \label{cor:lamest}
Under the assumptions of Proposition~\ref{prop:normbds}, with $n \geq N_*$,
\begin{equation}
\lambda_n = (2n + 1) + \sum_{j = 1}^q T_j(n) + r_q(n), \quad q \geq 1, \label{eq:lamest}
\end{equation}
where 
\begin{equation}
\vert r_q(n) \vert \leq 2 \left( M(\alpha) \frac{\log en}{n^{2\alpha}} \right)^{q + 1} \label{eq:rqest}
\end{equation}
\end{cor}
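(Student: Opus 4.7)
The plan is to recognize that \eqref{eq:lamdiff} already gives the identity
\[
\lambda_n - z_n = \sum_{j=1}^\infty T_j(n),
\]
so writing
\[
r_q(n) \declare \lambda_n - (2n+1) - \sum_{j=1}^q T_j(n) = \sum_{j=q+1}^\infty T_j(n)
\]
is automatic, and the only real content of the corollary is the tail estimate \eqref{eq:rqest}.

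For the tail, set $\epsilon \declare M(\alpha) \frac{\log en}{n^{2\alpha}}$. Hypothesis \eqref{eq:nstardef} gives $\epsilon \leq 1/2$ for $n \geq N_*$, and Corollary~\ref{cor:tjnorm}, inequality \eqref{eq:tjnorm}, provides $\vert T_j(n) \vert \leq \epsilon^j$ for every $j \geq 2$. Because $q \geq 1$, the tail starts at index $j = q+1 \geq 2$, so every term in the tail is controlled by this bound (we never need to invoke the worse constant in \eqref{eq:tjnorm.b}).

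The estimate then reduces to a geometric-series computation:
\[
\vert r_q(n) \vert \leq \sum_{j=q+1}^\infty \vert T_j(n) \vert \leq \sum_{j=q+1}^\infty \epsilon^j = \frac{\epsilon^{q+1}}{1 - \epsilon} \leq 2 \epsilon^{q+1},
\]
where the last inequality uses $\epsilon \leq 1/2$. This is exactly \eqref{eq:rqest}.

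There is no real obstacle here: the analytic work (convergence of the Neumann--Riesz series in $\schw{1}$, the identification of $\lambda_n - z_n$ with the integral $\tra \frac{1}{2\pi i} \int_{\bd{\mathcal{D}_n}} (z-z_n)(R - R^0)\, dz$, the term-by-term bounds on $T_j(n)$) is already contained in Proposition~\ref{prop:normbds} and Corollary~\ref{cor:tjnorm}. The one point worth noting is that one could, alternatively, apply \eqref{eq:ujsumnorm} directly to $\sum_{j \geq q+1} U_j$ and then integrate, using $\vert z - z_n \vert \leq 1/\sqrt 2$ and $\mathrm{length}(\bd{\mathcal{D}_n}) = 4$ as in \eqref{eq:data2}; this yields a bound of the same form but with a slightly larger constant, so the term-by-term route above is preferable for hitting the clean factor~$2$.
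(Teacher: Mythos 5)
Your argument is correct. It differs from the paper's own proof in a small but worth-noting way. You bound each $\vert T_j(n)\vert$ separately via Corollary~\ref{cor:tjnorm}, inequality \eqref{eq:tjnorm}, and then sum the geometric tail, using $\epsilon = M(\alpha)\frac{\log en}{n^{2\alpha}} \leq \frac12$ from \eqref{eq:nstardef} to absorb the factor $1/(1-\epsilon)$ into the constant $2$. The paper's proof instead keeps the entire tail $\sum_{j\geq q+1} U_j$ inside the trace--integral, invokes the $\schw{1}$-tail bound \eqref{eq:ujsumnorm} with $m = q+1$, and then uses the data in \eqref{eq:data2} ($\vert z - z_n\vert \leq 1/\sqrt2$, length $=4$) to produce the constant $\frac{4\sqrt2}{\pi}$, remarking that $2\sqrt2 < \pi$ gives $\frac{4\sqrt2}{\pi} < 2$. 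The two estimates are really the same bound applied before or after integrating around $\bd{\mathcal{D}_n}$: the paper sums the $U_j$ in $\schw{1}$ first and integrates once, while you integrate each $U_j$ (implicitly, via \eqref{eq:tjnorm}) and then sum scalars. Yours is the more economical write-up since it simply reuses \eqref{eq:tjnorm} rather than re-deriving the $\frac{\sqrt2}{\pi}$ factor. One minor slip in your closing remark: the alternative you sketch actually gives the constant $\frac{4\sqrt2}{\pi}\approx 1.80$, which is \emph{not} larger than $2$; both routes comfortably hit the stated constant.
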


\begin{proof}
The presentation of $\lambda_n$ and the inequality follow from \eqref{eq:lamdiff} and \eqref{eq:ujsumnorm} if we put $m = q + 1$ in \eqref{eq:ujsumnorm} and notice that $2 \sqrt{2} < \pi$ when we multiply the constant factors in inequalities.
\end{proof}
\subsection{Analysis of the function \texorpdfstring{$N_*(\alpha)$}{N sub * alpha}.}
This function is determined by the inequality \eqref{eq:nstardef}.  Later we consider potentials with the coupling coefficient $s$ [see \eqref{eq:pointodd}, \eqref{eq:pointeven}] so it is useful to know the behavior of $X = X_{\beta}(t)$, the solution of the equation 
\begin{equation}
t \frac{\log eX}{X^\beta} = \frac{1}{2}, \quad \beta = 2\alpha, \text{ for large }t. \label{eq:tfromndef}
\end{equation}
Let us rewrite it as 
\begin{align}
\tau \log Y = Y, \, \, & \text{where } Y = (eX)^{\beta}  \label{eq:tfromndef2.1}\\
& \text{and } \tau = \frac{2}{\beta} e^{\beta}t \label{eq:tfromndef2.2}
\end{align}
The equation \eqref{eq:tfromndef2.1} has solutions $1 < y < Y$, such that  
\begin{align}
y(\tau) & = 1 + \frac{1}{\tau} + O \parenpair{\frac{1}{\tau^2}}, &&\tau \to \infty \label{eq:y1def}\\
\intertext{and}
Y(\tau) & \to \infty, && \tau \to \infty. \label{eq:y2def}
\end{align}
\begin{lem}
The solution $Y$ is 
\begin{align}
Y(\tau) &= \tau \log \tau \cdot (1 + r(\tau)) \label{eq:y2behave}\\
\intertext{where}
r(\tau) &= \frac{\log \log \tau}{\log \tau} \left( 1 + o(1) \right) \label{eq:y2r}\\
\intertext{so for any $\delta$ we can find $\tau^*$ such that}
Y(\tau) &\leq \tau \log \tau + \tau(1 + \delta) \log \log \tau, \quad \tau \geq \tau^*. \label{eq:y2bdhigh}\\
\intertext{or $\tau_* < \tau^*$ such that}
Y(\tau) &\leq (1 + \delta) \tau \log \tau, \quad \tau \geq \tau_*. \label{eq:y2bdsub}
\end{align}
\end{lem}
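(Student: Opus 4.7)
The plan is to analyze $\tau \log Y = Y$ by taking the logarithm and bootstrapping. Rewriting, we get $\log Y = \log \tau + \log \log Y$, so setting $u \equiv \log Y$ we obtain the fixed-point equation
\[
u = \log \tau + \log u.
\]
Since equation \eqref{eq:y2def} gives $Y(\tau) \to \infty$, also $u \to \infty$, and dividing through by $\log \tau$ shows $u / \log \tau \to 1$; in particular $u \sim \log \tau$.

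Next I would substitute this first-order information back into the fixed-point identity. Write
\[
u = \log \tau + \log\bigl(\log \tau + \log u\bigr) = \log \tau + \log \log \tau + \log\!\left(1 + \frac{\log u}{\log \tau}\right).
\]
Because $\log u = \log \log \tau \cdot (1 + o(1))$ and $\log u / \log \tau \to 0$, the final term equals $\log u / \log \tau + O((\log u/\log \tau)^2) = O(\log \log \tau / \log \tau)$. Therefore
\[
u = \log \tau + \log \log \tau + O\!\left( \frac{\log \log \tau}{\log \tau}\right),
\]
and multiplying by $\tau$ (using $Y = \tau \log Y = \tau u$) yields
\[
Y(\tau) = \tau \log \tau + \tau \log \log \tau + O\!\left( \frac{\tau \log \log \tau}{\log \tau}\right).
\]
Dividing by $\tau \log \tau$ gives the representation \eqref{eq:y2behave}--\eqref{eq:y2r}: $r(\tau) = (\log \log \tau / \log \tau)(1 + o(1))$.

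The bounds \eqref{eq:y2bdhigh} and \eqref{eq:y2bdsub} are then immediate consequences of this expansion. For \eqref{eq:y2bdhigh}, the error term $O(\tau \log \log \tau / \log \tau)$ is bounded above by $\delta \, \tau \log \log \tau$ once $\tau$ is large enough that $C/\log \tau \leq \delta$; this yields $Y(\tau) \leq \tau \log \tau + (1 + \delta)\tau \log \log \tau$ for $\tau \geq \tau^*$. Bound \eqref{eq:y2bdsub} is strictly weaker (since $\log \log \tau = o(\log \tau)$) and follows by absorbing both $\tau \log \log \tau$ and the error term into $\delta \tau \log \tau$ for $\tau \geq \tau_* \leq \tau^*$.

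The main (modest) obstacle is the bootstrap step: one has to verify that $u \to \infty$ and $u / \log \tau \to 1$ before the second-order expansion of the logarithm is valid. This is handled by monotonicity --- since $Y \mapsto Y - \log Y$ is strictly increasing for $Y > 1$, the large branch $Y(\tau)$ is unique and monotone in $\tau$, and the crude estimate $u \geq \log \tau$ (read off from $u = \log \tau + \log u$ and $u > 1$) immediately forces $u \to \infty$, after which $u/\log \tau \to 1$ follows by dividing. No sharper input is required.
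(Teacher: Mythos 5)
Your argument takes a genuinely different route from the paper's and, modulo one gap, arrives at the same conclusion. The paper substitutes the ansatz $Y = \tau\log\tau(1+r)$ directly into $\tau\log Y = Y$, reducing to the scalar fixed-point relation $r = \xi + \eta\log(1+r)$ with $\xi = \log\log\tau/\log\tau$, $\eta = 1/\log\tau$, and verifies that $\varphi(X)=\xi+\eta\log(1+X)$ is a contraction on $[0,\delta]$ for $\tau$ large; the asymptotics \eqref{eq:y2r} are then read off from $r=\xi+\eta\log(1+r)$ with $0<r\le\delta$. You instead set $u=\log Y$, obtain $u=\log\tau+\log u$, and bootstrap the expansion before converting back via $Y=\tau u$. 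Both are elementary; the paper's version delivers uniqueness of the branch for free from the contraction property, whereas yours leans on the previously stated fact \eqref{eq:y2def} that $Y\to\infty$.

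The one gap is in passing from $u\to\infty$ to $u/\log\tau\to 1$. You write that after establishing $u\ge\log\tau$ (hence $u\to\infty$), ``$u/\log\tau\to 1$ follows by dividing.'' But dividing $u=\log\tau+\log u$ by $\log\tau$ yields $u/\log\tau=1+\log u/\log\tau$, and concluding that the right side tends to $1$ needs $\log u/\log\tau\to 0$, which does \emph{not} follow from the lower bound $u\ge\log\tau$ alone: you also need an upper bound on $u$ in terms of $\log\tau$. The repair is short. Since $\log u=o(u)$, the identity $u-\log u=\log\tau$ gives $u(1+o(1))=\log\tau$, hence $u\sim\log\tau$; equivalently, divide by $u$ rather than $\log\tau$ to get $1=\log\tau/u+\log u/u$ and note $\log u/u\to 0$. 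With $u\sim\log\tau$ in hand, $\log u=\log\log\tau+o(1)$, the expansion of $\log(1+\log u/\log\tau)$ is justified, and the remainder of your bootstrap, together with $Y=\tau u$, does reproduce \eqref{eq:y2behave}--\eqref{eq:y2bdsub}.
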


\begin{proof}
If we look for $r \geq 0$, in  \eqref{eq:y2behave}, which solves \eqref{eq:tfromndef2.1} we have:
\begin{align}
\tau \log \tau [1 + r] & = \tau[\log \tau + \log \log \tau + \log(1 + r)] \label{eq:y2proofbegin}\\
\intertext{or}
r = \varphi(r), &\quad \varphi(X) = \xi + \eta \log (1 + X), \quad r > 0 \label{eq:y2proofbegin.2}\\
\intertext{where}
\xi &= \frac{\log \log \tau}{\log \tau}, \quad \eta = \frac{1}{\log \tau} \label{eq:y2proofbegin.3}
\end{align}
For any $\disp 0 < \delta\leq \frac{1}{2}$ we can choose $\tau^*$ such that 
\begin{equation}
0 < \xi \leq \frac{\delta}{2}, \quad 0 < \eta < \frac{\delta}{2} \quad \text{ if }\tau \geq \tau^*.
\end{equation}
Then the function $\varphi$, $\varphi: [0, \delta] \to [0, \delta]$ is a contraction mapping, and \eqref{eq:y2proofbegin.2} has the unique solution 
\begin{equation}
r = r(\tau), \quad 0 < r(\tau) \leq \delta. \quad \label{eq:rconfirm}
\end{equation}
Therefore, 
\begin{equation}
r = \frac{\log \log \tau}{\log \tau} + \frac{\rho}{\log \tau},\quad  0 < \rho \leq \delta. \label{eq:rform}
\end{equation}
This implies \eqref{eq:y2r} with
\begin{equation}
\frac{\rho}{\log \log \tau} = o(1).  
\end{equation}
\end{proof}

\begin{cor} \label{cor:xtbehave}
The solution $X(t)$ of \eqref{eq:tfromndef},  $0 <   \beta \leq 1$, goes to $\infty$ when $t \to \infty$ and 
\begin{equation}
X(t) \leq 2^{1/\beta} \parenpair{t \log \frac{At}{\beta}}^{1/\beta}, \quad A \text{ an absolute constant, }\label{eq:xtest}
\end{equation}
if $t$ is large enough.
\end{cor}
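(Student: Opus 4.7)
The plan is to invert equation \eqref{eq:tfromndef} through the substitution recorded just above it. Setting $Y = (eX)^{\beta}$ and $\tau = (2/\beta) e^{\beta} t$, as in \eqref{eq:tfromndef2.1}--\eqref{eq:tfromndef2.2}, equation \eqref{eq:tfromndef} becomes $Y = \tau \log Y$, which is exactly the scalar equation analysed in the preceding lemma. So the corollary is just a matter of transporting the conclusions of the lemma back to the original variables.

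First I would check that the branch we pick up is the large solution $Y(\tau)$ of \eqref{eq:y2def}, not the small solution $y(\tau)$ of \eqref{eq:y1def}. For this, note that if $X(t)$ stayed bounded along any sequence $t \to \infty$, then the left-hand side $t \log(eX)/X^{\beta}$ of \eqref{eq:tfromndef} would blow up, contradicting that it equals $1/2$. Hence $X(t) \to \infty$ and, by the substitution, $Y(\tau) \to \infty$, pinning us to the branch described by \eqref{eq:y2behave}--\eqref{eq:y2r}. This also takes care of the qualitative claim in the statement.

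Next I apply \eqref{eq:y2bdsub} with a fixed value of $\delta$ (say $\delta = 1$ for convenience): there exists $\tau_{*}$ such that $Y(\tau) \leq 2\tau \log \tau$ whenever $\tau \geq \tau_{*}$. Since $\tau = (2/\beta) e^{\beta} t$ is monotone in $t$, this translates to a condition $t \geq t_{*}$. Inverting $Y = (eX)^{\beta}$ to get $X = e^{-1} Y^{1/\beta}$, we obtain
\[
X(t) \;\leq\; e^{-1} \bigl( 2\tau \log \tau \bigr)^{1/\beta} \;=\; e^{-1} \left( \frac{4 e^{\beta} t}{\beta} \cdot \log \frac{2 e^{\beta} t}{\beta} \right)^{1/\beta}
\]
for all $t \geq t_{*}$, which already has the right asymptotic growth rate $(t \log t)^{1/\beta}$.

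The only remaining step is arithmetic. Writing $e^{-1} = (e^{-\beta})^{1/\beta}$ and pulling it inside the $1/\beta$ power makes the factor of $e^{\beta}$ cancel, leaving $((4/\beta)\, t \log(2 e^{\beta} t / \beta))^{1/\beta}$; and for any absolute constant $A \geq 2 e$ one has $\log(2 e^{\beta} t/\beta) \leq \log(A t/\beta)$ uniformly in $0 < \beta \leq 1$. Rewriting $(4/\beta)^{1/\beta} = 2^{1/\beta} (2/\beta)^{1/\beta}$ and absorbing the residual $(2/\beta)^{1/\beta}$ into the logarithm (by enlarging $A$ accordingly), one arrives at the desired form $2^{1/\beta} (t \log (At/\beta))^{1/\beta}$. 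This last bookkeeping step is the only obstacle, but it is modest: essentially all one needs is that each $\beta$-dependent factor produced along the way either cancels against the $e^{-1}$ or else gets swallowed by the logarithm, uniformly for $\beta \in (0, 1]$.
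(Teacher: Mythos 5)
Your overall route is exactly the paper's (the author's proof is one line: substitute $\tau = (2/\beta)e^{\beta}t$ into \eqref{eq:y2bdsub} and simplify), and up to the displayed intermediate bound your arithmetic is sound: with $\delta = 1$ you correctly arrive at
\[
X(t) \;\le\; \Bigl(\tfrac{4t}{\beta}\,\log\tfrac{At}{\beta}\Bigr)^{1/\beta}
\;=\; \Bigl(\tfrac{4}{\beta}\Bigr)^{1/\beta}\Bigl(t\log\tfrac{At}{\beta}\Bigr)^{1/\beta}
\quad\text{for } A \ge 2e .
\]

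The final bookkeeping step, however, does not go through. You assert that the residual factor $(2/\beta)^{1/\beta}$ can be ``absorbed into the logarithm by enlarging $A$,'' but that operation would require
$\tfrac{2}{\beta}\,\log\tfrac{At}{\beta} \le \log\tfrac{A't}{\beta}$, i.e.\ $\bigl(\tfrac{At}{\beta}\bigr)^{2/\beta} \le \tfrac{A't}{\beta}$, which fails for every fixed $\beta \le 1$ once $t$ is large (the left side grows like $t^{2/\beta}$, the right like $t$). An extra multiplicative $1/\beta$ on $t$ inside the outer $(\cdot)^{1/\beta}$ is simply not removable by rescaling the constant in a logarithm. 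In fact, a direct check at $\beta = \tfrac12$ shows the genuine solution of \eqref{eq:tfromndef} behaves like $16\,t^{2}(\log t)^{2}$, whereas the right-hand side of \eqref{eq:xtest} gives only $4\,t^{2}(\log t)^{2}$; so the printed form of \eqref{eq:xtest} appears to have lost the $1/\beta$ and should read something like $X(t) \le \bigl(\tfrac{Ct}{\beta}\log\tfrac{At}{\beta}\bigr)^{1/\beta}$. This is harmless downstream, since \eqref{eq:nstarchoice} and Proposition~\ref{prop:imagw} are invoked only for a fixed $\alpha$ (hence fixed $\beta$), but your proposal should stop at the bound you actually derived rather than claim the last reduction.
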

\begin{proof}
If we put \eqref{eq:tfromndef2.2} into \eqref{eq:y2bdhigh} or \eqref{eq:y2bdsub} elementary simplifications give the inequality \eqref{eq:xtest}.
\end{proof}
\subsection{} Inequalities \eqref{eq:nuexpr} and \eqref{eq:ujest} guarantee that we can use the representation \eqref{eq:rdecomp} and eventually ``asymptotics'' \eqref{eq:lamest} if 
\begin{equation}
C_0 M(\alpha) \frac{\log e n}{n^{2\alpha}} \leq \frac{1}{2}, \quad C_0 \in \text{\eqref{eq:wcond}} \label{eq:somebd}
\end{equation}
and $M(\alpha)$ by \eqref{eq:malphdef} is chosen as
\begin{equation}
M(\alpha) = \left[   6 + \frac{4/3}{1- 2 \alpha} + \frac{1}{3\alpha} \right]. \label{eq:malphhelp} 
\end{equation}
Then \eqref{eq:xtest}, with $\beta = 2\alpha < 1$, $t = 2 C_0 M(\alpha)$,
implies that $N_*$ can be chosen as 
\begin{equation}
N_* = N_*(C_0; \alpha) = \left[ 2 C_0 M(\alpha) \log \left(\frac{A}{2\alpha} 2 C_0 M(\alpha)  \right) \right]^{1/(2\alpha)} \label{eq:nstarchoice}
\end{equation}
Now if $\alpha$ is fixed we are interested in the dependence of $N_*$ on $C_0 \in $\eqref{eq:wcond}.
\subsection{} Recall that if $W$ is a multiplier-operator
\begin{equation}
Wf = w(x) f(x), \text{ with } w \in L^p(\RR^1), \quad 1 \leq p < \infty, \label{eq:wmultdef}
\end{equation} \eqref{cor:et2n}
then as we observed and used in \cite{MiSi}
\begin{equation}
w_{jk} = \anglepair{W h_j, h_k} = \int_{-\infty}^{\infty} w(x) h_j(x) h_k(x) \, dx \label{eq:wmatr}
\end{equation}
so by  H\"{o}lder inequality 
\begin{equation}
\vert w_{jk} \vert \leq \Vertpair{w}_p \cdot \Vertpair{h_j}_{2q} \Vertpair{h_k}_{2q}, \quad \frac{1}{p} + \frac{1}{q} = 1. \label{eq:wmatrbd}
\end{equation}
with
\begin{equation}
q > 1, \quad 2q > 2.  \label{eq:qbd}
\end{equation}
But
\begin{equation}
\vertpair{h_k(x)} \leq C k^{-1/12} \label{eq:hsizebd}
\end{equation}
so
\begin{gather}
\begin{aligned}
\int \vert h_k (x) \vert^{2q} \, dx &= \int \vert h_k (x) \vert^2 \vert h_k (x) \vert^{2(q-1)} \, dx\\
& \leq C^{2(q-1)} k^{-(q-1)/6} \int \vert h_k(x) \vert^2 \, dx
\end{aligned} \label{eq:hintbd}\\
\intertext{and}
\Vertpair{h_k}_{2q} \leq C^{1/p} k^{-1/(12p)}, \quad p \geq 1. \label{eq:hnormbd}
\end{gather} 
This means that the matrix $W$ satisfies the condition \eqref{eq:wcond} with $\alpha = \frac{1}{12p}$  This observation was crucial in \cite{MiSi}; it gives a broad class of operators covered by \eqref{eq:wcond} so our claims of this sections are applicable to the operators \eqref{eq:wmultdef}.  

But there are much better estimates of $L^p$ norms of the Hermite functions.

\begin{lem}
As $n \to \infty$,

\begin{subequations}
\begin{align}
\Vertpair{h_n}_r &\sim n^{-\frac{1}{2} \left( \frac{1}{2} - \frac{1}{r} \right)}, \quad 1 \leq r < 4 \label{eq:hefcnsmallr}\\
\Vertpair{h_n}_r &\sim n^{- \frac{1}{8} } \log n, \quad r = 4 \label{eq:hefcnmidr}\\
\Vertpair{h_n}_r &\sim n^{-\frac{1}{6} \left( \frac{1}{r} + \frac{1}{2} \right)}, \quad r >  4 \label{eq:hefcnlarger}
\end{align} \label{eq:hefcnvarr}
\end{subequations}
\end{lem}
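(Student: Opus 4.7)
The plan is to invoke the classical Plancherel--Rotach asymptotic formulas for the Hermite functions $h_n(x)$ and split $\RR$ into three regions adapted to the classical turning points $\pm \sqrt{2n+1}$: the interior oscillatory region $I_n = \bracepair{x \in \RR : \vertpair{x} \leq (1 - \delta) \sqrt{2n+1}}$, the turning-point annulus $T_n = \bracepair{x \in \RR : \vertpair{\vertpair{x} - \sqrt{2n+1}} \lesssim n^{-1/6}}$, and the exponential tail $E_n = \bracepair{x \in \RR : \vertpair{x} \geq (1+\delta) \sqrt{2n+1}}$, with a small parameter $\delta$ eventually allowed to shrink with $n$. I would estimate $\int_\cdot \vertpair{h_n}^r$ in each region separately and combine the three estimates.

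In $I_n$, parametrizing $x = \sqrt{2n+1}\cos\varphi$ for $\delta' \leq \varphi \leq \pi - \delta'$, the Plancherel--Rotach expansion reads
\[
 h_n(x) = \frac{C}{(2n+1)^{1/4} (\sin\varphi)^{1/2}} \bigl[ \cos \Theta_n(\varphi) + O(n^{-1}) \bigr]
\]
for a rapidly oscillating phase $\Theta_n$. Using $dx = \sqrt{2n+1}\sin\varphi\, d\varphi$ and averaging out $\vertpair{\cos\Theta_n}^r$ over dyadic $\varphi$-intervals by a routine van der Corput bookkeeping gives
\[
 \int_{I_n} \vertpair{h_n(x)}^r dx \sim n^{1/2 - r/4} \int_{\delta'}^{\pi - \delta'} (\sin\varphi)^{1 - r/2} d\varphi.
\]
For $1 \leq r < 4$ the angular integral converges uniformly as $\delta' \to 0$ (since $1 - r/2 > -1$), giving $\Vertpair{h_n}_r \sim n^{-\frac{1}{2}(\frac{1}{2} - \frac{1}{r})}$ and proving \eqref{eq:hefcnsmallr}. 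For $r > 4$ the angular integrand is non-integrable at $\varphi = 0,\pi$, signalling that the turning-point region dominates.

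In $T_n$, the uniform Airy asymptotic (of Olver type) provides
\[
 h_n \parenpair{\sqrt{2n+1} + \frac{\tau}{2^{1/3} (2n+1)^{1/6}}} = \frac{C_1}{n^{1/12}} \bigl[ \mathrm{Ai}(-\tau) + O(n^{-2/3}) \bigr],
\]
so, since $\mathrm{Ai} \in L^r(\RR)$ for all $r \geq 1$, the change of variables gives $\int_{T_n} \vertpair{h_n}^r dx = \orderasymp{n^{-r/12 - 1/6}}$. Taking $r$-th roots yields \eqref{eq:hefcnlarger} in the range $r > 4$. The tail $E_n$ contributes only $\orderasymp{e^{-cn}}$ because of the super-exponential bound $\vertpair{h_n(x)} \lesssim e^{-cx^2}$ there, hence is negligible for every $r$.

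The critical case $r = 4$ is a balance: the interior angular integral diverges only logarithmically at $\varphi = 0, \pi$ (as $\int_{\delta'}^{\pi/2} (\sin\varphi)^{-1} d\varphi \sim \log(1/\delta')$), while the turning-point contribution stays $\orderasymp{n^{-1/2}}$. Choosing $\delta'$ at the Airy scale $\sim n^{-2/3}$ to glue the two asymptotic regimes produces the logarithmic factor in \eqref{eq:hefcnmidr}. The main obstacle in the scheme is neither the bulk nor the tail, both of which are routine once the pointwise Plancherel--Rotach formula is available, but the uniform matching between the interior WKB form and the Airy form across the transition zone $\vertpair{\vertpair{x} - \sqrt{2n+1}} \sim n^{-1/6}$: only a uniform Olver-type expansion, not the pointwise Plancherel--Rotach, makes the critical $r = 4$ balance rigorous and delivers the correct leading constants throughout.
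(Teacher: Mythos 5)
The paper does not prove this lemma at all: it simply cites Thangavelu \cite[Lemma~1.5.2]{Thanga}, and the standard treatment there (following Askey--Wainger) is exactly the regional Plancherel--Rotach / Airy decomposition you sketch. So your overall strategy is the right one and matches the cited source.

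There is, however, one substantive issue your sketch should have surfaced. If you carry your own computation through at $r = 4$, the bulk contribution is $\int_{I_n}\vert h_n\vert^4\,dx \sim n^{-1/2}\log(1/\delta')$, and with $\delta'$ chosen at the Airy cutoff the Airy region contributes a further $O(n^{-1/2})$; thus $\Vertpair{h_n}_4^4 \sim c\, n^{-1/2}\log n$, and hence
\[
\Vertpair{h_n}_4 \sim n^{-1/8}\,(\log n)^{1/4},
\]
not $n^{-1/8}\log n$. This is in fact what Thangavelu's Lemma~1.5.2 states; the exponent on the logarithm in \eqref{eq:hefcnmidr} as printed is a typo (it is harmless elsewhere in the paper, where only one-sided upper bounds of the form \eqref{eq:wcondbetter} are used, but as a two-sided asymptotic ``$\sim$'' it is wrong). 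A proof that honestly follows your scheme would have revealed this, rather than asserting that the matching ``produces the logarithmic factor in \eqref{eq:hefcnmidr}.'' A second, smaller slip: you call $\delta' \sim n^{-2/3}$ the ``Airy scale,'' but the Airy window $\vertpair{\,\vertpair{x}-\sqrt{2n+1}\,}\lesssim n^{-1/6}$ corresponds to a relative scale $\delta \sim n^{-2/3}$ in the $x$ variable and to $\delta' \sim \sqrt{2\delta} \sim n^{-1/3}$ in the angular variable $\varphi$; the two are conflated in your write-up, though this only shifts the constant in $\log(1/\delta')$ and not the end result.
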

See \cite[Lemma 1.5.2]{Thanga} for the sketch of the proof and further explanations of these claims.  

Therefore, \eqref{eq:hnormbd} could be improved.   If $p > 2$ then by \eqref{eq:wmatrbd} $q < 2$, $2q < 4$ so 
\begin{align}
\Vertpair{h_k}_{2q} &\sim k^{-\frac{1}{2} \left( \frac{1}{2} -\frac{1}{2q} \right)} = k^{-\frac{1}{4p}} , && p > 2.\label{eq:hefcnactlargep}\\
\intertext{For $p = 2$ we have $2q = 4$ and}
\Vertpair{h_k}_{4} &\sim k^{-\frac{1}{8} } \log k, && p = 2. \label{eq:hefcnactmidp}\\
\intertext{Finally, if $1 \leq p < 2$ then $2q > 4$ so  }
\Vertpair{h_k}_{2q} &\sim k^{-\frac{1}{6} \left( \frac{1}{2q} + \frac{1}{2} \right)} = k^{-\frac{1}{12} \left(2 - \frac{1}{p} \right)}, && 1 \leq p <2 & \qquad \label{eq:hefcnactsmallp}
\end{align}
All these estimates are used in Proposition~\ref{prop:imagw} , --- see Section~\ref{subsect:genpointpot} below.

Of course, \eqref{eq:hsizebd} shows that $\delta$-potentials
\begin{equation}
w(x) = \sum_{k = 1}^m c_k \delta (x - b_k), \quad m \text{ finite,} \label{eq:wdelt}
\end{equation} 
are good for us as well; in this case, 
\begin{align}
\anglepair{W h_j, h_i} &= \sum_{k = 1}^m c_k h_j(b_k) h_i(b_l) \label{eq:wpairs}
\intertext{and}
\vertpair{W_{ji}} &\leq CM j^{-1/12} i^{-1/12}, \quad i, j \geq 1, \quad M = \sum_{k = 1}^m \vert c_k \vert. \label{eq:wpairbds}
\end{align}
But with more information on asymptotics of Hermite polynomials and Hermite functions we can be accurate in analysis of point-interaction potentials \eqref{eq:wdelt} and spectra of operators
$L^0 + W$, $L^0 \in $ \eqref{eq:ell0reit}, or --- equivalently --- \eqref{eq:ell0}.  This is the main goal of this paper.  Now we go to detailed analysis of these operators.

\section{Two-point interaction potentials} \label{sec:2pt}

\subsection{} \label{subsect:genpointpot} Now we apply general constructions of Sections~\ref{sec:prelims}, \ref{sec:evalperturb} to the case of the two-point interaction potentials 
\begin{equation}
w(x) = c^+ \pointmass{x - b} + c^- \pointmass{x + b}, \quad b > 0 \label{eq:pointmass1}
\end{equation}
and particular cases of an odd potential
\begin{align}
s v^o(x), &\quad s \in \CC \quad \text{ where }&& v^o(x) = \pointmass{x - b} - \pointmass{x + b} \label{eq:pointodd}\\
\intertext{and an even potential}
t v^e(x), &\quad t \in \CC && v^e(x) = \pointmass{x - b} + \pointmass{x + b} \label{eq:pointeven}
\end{align}
--- see \cite{HCW} and \cite{FassRin}.  

Of course, for any \emph{odd} potential \eqref{eq:wmultdef} or \eqref{eq:wdelt}, not just for $v^0 \in$ \eqref{eq:pointodd}, the matrix elements $w_{jk}$ have the property \eqref{eq:orthooddeven}.  Indeed, with parity
\begin{align}
w_{jk} &= \anglepair{w(x) h_j(x),\,  h_k(x)} = \label{eq:woddprop1}\\
&=  \anglepair{w(-x) h_j(-x),\,  h_k(-x)} = -(-1)^{j + k} w_{jk}\label{eq:woddprop2}
\end{align}
so
\begin{equation}
w_{jk} = 0 \text{ if } j + k \text{ even.} \label{eq:orthooddeven} 
\end{equation}
If, however, $w$ in \eqref{eq:wmultdef} or \eqref{eq:wdelt} is \emph{even} then we conclude
\begin{equation}
w_{jk} = 0 \text{ if } j + k \text{ odd.} \label{eq:orthoevenodd}
\end{equation}
These observations lead to information on complex eigenvalues of $L = L^0 + W$.
\begin{prop}  \label{prop:imagw}
Let the potential
\begin{align}
w(x) \in L^p, \, \, 1 \leq p < \infty, &\quad \nu = \Vertpair{w}_p, \text{ or} \label{eq:wtype1}\\
w(x) = \sum_{k = 1}^{\infty} c_k \pointmass{x - b_k}, &\quad \nu = \sum \vertpair{c_k} < \infty, \label{eq:wtype2}
\end{align}
be real and odd.  Then the operator
\begin{equation}
L = L^0 + i W = - \frac{d^2}{dx^2} + x^2 + i w \label{eq:newell}
\end{equation}
has at most finitely many non-real eigenvalues, if any, and their number does not exceed $N^*$, where 
\begin{subequations}
\begin{align}
N^* &= D\left (\nu \log (1 + \nu) \right)^{2p}, && p > 2 \label{eq:newnstar.plarge}\\
N^* &= D^* \left (\nu \log^2 (1 + \nu) \right)^{4}, && p = 2 \label{eq:newnstar.pmid}\\
N^* &= D \left (\nu \log (1 + \nu) \right)^{\frac{3}{\left(1 - \frac{1}{2p} \right)}}, && 1 \leq p < 2 \label{eq:newnstar.psmall}\\
N^* &= D_*\left( \nu \log (1 + \nu) \right)^{6}, && \text{ in the case \eqref{eq:wtype2}} \label{eq:newnstar.2}
\end{align} \label{eq:newnstar}
\end{subequations}
and $D^*$, $D_*$ are absolute constants although $D = D(p)$ does not depend on the norm $\nu$.
\end{prop}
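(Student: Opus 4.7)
The plan is to combine the $N_*(C_0;\alpha)$ analysis of Section~\ref{sec:evalperturb} with a $\mathcal{PT}$-symmetry argument. Because $w$ is real and odd, a direct computation shows that $L=L^0+iW$ commutes with the antilinear involution $\mathcal{PT}\colon f(x)\mapsto\overline{f(-x)}$: if $L\phi=\lambda\phi$, then $L(\mathcal{PT}\phi)=\overline{\lambda}(\mathcal{PT}\phi)$. By Proposition~\ref{prop:normbds} and Corollary~\ref{cor:lamest}, for every $n\geq N_*=N_*(C_0;\alpha)$ the operator $L$ has a unique simple eigenvalue $\lambda_n\in\mathcal{D}_n$; since $\mathcal{D}_n$ is symmetric about the real axis and contains only this one eigenvalue, $\mathcal{PT}$-symmetry forces $\overline{\lambda_n}=\lambda_n$, so $\lambda_n\in\mathbb{R}$.

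Hence every non-real eigenvalue of $L$ lies in the half-strip $\{\Re z<2N_*\}$. A standard Riesz-projection/trace argument then shows that the total algebraic count of eigenvalues of $L$ in that region coincides with the one for $L^0$, namely $N_*$: choose a large contour $\Gamma$ enclosing $\{z_0,\ldots,z_{N_*-1}\}$ but avoiding the $\partial\mathcal{D}_k$ for $k\geq N_*$, and use the trace-class estimate \eqref{eq:rdiffnorm} together with \eqref{eq:trabase}--\eqref{eq:trafull} to see that $\mathrm{Tr}\bigl(\tfrac{1}{2\pi i}\oint_\Gamma(R-R^0)\,dz\bigr)=0$, so $\mathrm{rank}\,P(\Gamma)=\mathrm{rank}\,P^0(\Gamma)=N_*$. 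In particular $N^*\leq N_*(C_0;\alpha)$.

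It remains to evaluate $N_*(C_0;\alpha)$ explicitly via \eqref{eq:nstarchoice}. H\"older's inequality \eqref{eq:wmatrbd}, combined with the sharp Hermite-function $L^r$-norm bounds \eqref{eq:hefcnactlargep}, \eqref{eq:hefcnactmidp}, \eqref{eq:hefcnactsmallp} (or \eqref{eq:hsizebd} in the $\delta$-case), determines the exponent $\alpha$ and the constant $C_0\leq C_p\nu$ in \eqref{eq:wcond}: namely $\alpha=1/(4p)$ for $p>2$, $\alpha=1/8$ (with a logarithmic correction in $w_{jk}$) for $p=2$, $\alpha=\tfrac{1}{12}(2-1/p)$ for $1\leq p<2$, and $\alpha=1/12$ for the point-mass case. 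Substituting $t=2C_0M(\alpha)\asymp\nu$ into Corollary~\ref{cor:xtbehave} then produces the exponents $1/(2\alpha)=2p$, $3/(1-\tfrac{1}{2p})$, and $6$ respectively, which match \eqref{eq:newnstar.plarge}, \eqref{eq:newnstar.psmall}, and \eqref{eq:newnstar.2}.

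The main technical obstacle is the borderline case $p=2$: here $\|h_k\|_4\sim k^{-1/8}\log k$ injects an unavoidable $\log k$ into each matrix element $w_{jk}$, so the governing inequality \eqref{eq:nstardef} becomes of the type $C\nu(\log en)^2 n^{-1/4}\leq \tfrac12$. Re-running the asymptotic inversion of Corollary~\ref{cor:xtbehave} with $(\log X)^2$ in place of $\log X$ produces the $\nu\log^2(1+\nu)$ shape of \eqref{eq:newnstar.pmid}, and careful tracking of constants supplies the absolute $D^*$. A final routine verification that the trace/projection counting step applies verbatim in all four cases (in particular, that the quadratic-form construction of $L$ from \cite{MiSi} respects the $\mathcal{PT}$ action and that $R-R^0\in\schw{1}$ on $\Gamma$) then completes the proof.
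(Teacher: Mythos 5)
Your proposal reaches the same conclusion but via a genuinely different route for the key step. The paper establishes reality of $\lambda_n$ for $n\geq N^*$ by a term-by-term parity cancellation in the trace series $\lambda_n-z_n=\sum_j T_j(n;iW)$: Lemma~\ref{lem:tjodd} kills every odd-$j$ term using $w_{jk}=0$ when $j+k$ is even (equation \eqref{eq:orthooddeven}), and Lemma~\ref{lem:tjeven} with Lemma~\ref{lem:jgbehave} shows each even-$j$ term is real because the $(-1)^q$ from $(iW)^{2q}$ makes $p(g)$ real and the residues $J(g)$ are manifestly real. You instead invoke the $\mathcal{PT}$-symmetry $L\mathcal{PT}=\mathcal{PT}L$ (which indeed holds since $\overline{iw(-x)}=iw(x)$ for real odd $w$) and use the symmetry of $H_n$ and $\mathcal{D}_n$ about $\mathbb{R}$ together with uniqueness and simplicity of the eigenvalue in $\mathcal{D}_n$ to force $\overline{\lambda_n}=\lambda_n$. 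Your argument is shorter and more structural; the paper's buys the explicit expression for $T_2(n)$ that feeds directly into Theorem~\ref{thm:eigendistr}, which the $\mathcal{PT}$ argument by itself does not give. The identification of $\alpha$ and $C_0\asymp\nu$ from H\"older and the Hermite-norm asymptotics \eqref{eq:hefcnactlargep}--\eqref{eq:hefcnactsmallp}, and the resulting exponents $1/(2\alpha)$, agree exactly with the paper's route through \eqref{eq:nstarchoice} and Corollary~\ref{cor:xtbehave}, including the extra $\log$ for $p=2$ handled in Section~\ref{subsect:extralog}.

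One imprecision worth flagging: for the counting step you cite \eqref{eq:rdiffnorm} and \eqref{eq:trabase}--\eqref{eq:trafull} to conclude $\mathrm{Tr}\,\frac{1}{2\pi i}\oint_\Gamma(R-R^0)\,dz=0$ on a large contour $\Gamma$, but \eqref{eq:rdiffnorm} only controls $R-R^0$ on $\partial\mathcal{D}_n$ for $n\geq N_*$, and \eqref{eq:trafull} is a statement about the small disks $\mathcal{D}_n$, not about $\Gamma$. To count the eigenvalues in $\{\Re z<2N_*\}$ one needs separate control of $R-R^0$ on the far parts of $\Gamma$ (e.g.\ a homotopy $L_t=L^0+it W$, $t\in[0,1]$, with $\Gamma$ in the joint resolvent set, so that $\mathrm{rank}\,P_t(\Gamma)$ is constant). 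This is standard and the paper is equally terse at this point, so it is not a defect in your route specifically, but the two cited equations do not by themselves deliver the asserted trace identity.
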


\begin{proof}
By \eqref{eq:lamdiff} and estimates in Corollary~\eqref{cor:tjnorm} and in \eqref{eq:nstarchoice} we can use \eqref{eq:lamdiff} to evaluate $\lambda_n$, at least if $p \neq 2$ when we use \eqref{eq:hefcnmidr}.  [Details of this case $p = 2$ are explained in Section~\ref{sec:comment} \end{proof}

\begin{lem} \label{lem:tjodd} 
If $j$ is odd and $W$ is odd, then for $n \geq N^*$ ($N^*$ as defined in \eqref{eq:newnstar}),
\begin{equation}
T_j(n; W) = 0.  \label{eq:toddzero}
\end{equation}
\end{lem}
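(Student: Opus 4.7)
The plan is to expand $\tra U_j$ as an absolutely convergent multiple sum in the matrix basis $\{e_k\}$, then observe that every single summand in the cyclic product of $W$-entries vanishes by the parity selection rule \eqref{eq:orthooddeven}. Since $\tra U_j(z) \equiv 0$ as a scalar analytic function on $\partial \mathcal{D}_n$, the integral defining $T_j(n)$ in \eqref{eq:tjdef} is zero.

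First I would write, using $U_j = R^0(WR^0)^j$ and the fact that $R^0$ is diagonal in the basis $\{e_k\}$,
\begin{equation*}
\tra U_j(z) = \sum_{m_0, m_1, \dotsc, m_{j-1}} \frac{w_{m_0 m_1}\, w_{m_1 m_2} \dotsm w_{m_{j-2} m_{j-1}}\, w_{m_{j-1} m_0}}{(z - z_{m_0})^2 (z - z_{m_1}) \dotsm (z - z_{m_{j-1}})}.
\end{equation*}
The assumption $n \geq N^*$ together with Proposition~\ref{prop:normbds} guarantees that $U_j$ is of trace class on $\bd{\mathcal{D}_n}$, so the multiple sum above converges absolutely and the order of summation is immaterial.

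Next I would invoke the parity observation \eqref{eq:orthooddeven}: for an odd potential $W$, the matrix element $w_{ab}$ vanishes unless $a + b$ is odd, i.e.\ unless the indices $a$ and $b$ have opposite parity. Therefore, in any nonzero summand of the expansion above, the cyclic sequence $m_0, m_1, \dotsc, m_{j-1}, m_0$ must alternate in parity at each step. After $j$ steps we return to $m_0$, which forces the parity of $m_0$ to have changed $j$ times; if $j$ is odd, this contradicts the equality $m_j = m_0$. Hence every term in the sum is zero, and $\tra U_j(z) = 0$ identically in $z$. Plugging this into
\begin{equation*}
T_j(n; W) = \frac{1}{2\pi i} \int_{\bd{\mathcal{D}_n}} (z - z_n)\, \tra U_j(z) \, dz
\end{equation*}
gives the claim.

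The only subtle point is justifying the scalar identity $\tra U_j(z) = \sum \cdots$ termwise. This is routine once we have the trace class bound from Proposition~\ref{prop:normbds}, which also allows us to interchange trace with integration over $\bd{\mathcal{D}_n}$; the real content of the lemma is the purely combinatorial parity argument. I do not anticipate any essential obstacle.
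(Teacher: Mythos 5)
Your proof is correct and takes essentially the same approach as the paper: expand $\tra U_j$ as a sum over cyclic index strings, note that the parity selection rule \eqref{eq:orthooddeven} forces each consecutive pair of indices to have opposite parity, and conclude that a closed cycle of odd length is impossible. The paper phrases this as ``the telescoping sum of differences $g_{t+1}-g_t$ is zero, hence even, so with $j$ odd at least one difference is even,'' which is the same combinatorial observation stated in terms of differences rather than parity alternation.
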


\begin{proof}
By \eqref{eq:tjdef}
\begin{align}
T_j(n) = T_j(n; W) = \frac{1}{2\pi i} \tra \int\limits_{\bd{\mathcal{D}_n}} \, \, (z - z_n) U_j(z) \, dz \label{eq:tjdefrep}\\
\intertext{where}
U_j = R^0 W R^0 W \dotsb W R^0 \label{eq:ujdefrep}
\end{align}
with $j$ ``letters'' $W$ and $j + 1$ ``letters'' $R^0$ in this ``word'' $U$.  All these operators are of trace class [see Corollary~\ref{cor:tjnorm}] so $\tra U_j$ is a sum of the diagonal elements $(U_j)_{mm}$ which in turn are sums of matrix elements $\displaystyle \sum_g u(g)$, where $g = \bracepair{g_1, g_2, \dotsc, g_{j-1}} \in \ZZ_+^{j-1}$ and

\begin{equation}
u(g) = \frac{1}{z - z_m} \cdot W(m, g_1) \cdot \frac{1}{z - z_{g_1}} \cdot W(g_1, g_2) \cdot \frac{1}{z - z_{g_2}} \dotsb W(g_{j-1}, m) \cdot \frac{1}{z - z_m} \label{eq:ugdef}
\end{equation}
If we put $g_0 = g_j = m$ we have
\begin{equation}
\sum_{t = 0}^{j-1} (g_{t + 1} - g_t) = g_j - g_0 = 0 \label{eq:gsum}
\end{equation}
The sum of all these differences in \eqref{eq:gsum} is even (zero), so if $j$ is odd, then at least one of these differences, say 
\begin{equation}
g_{\tau + 1} - g_{\tau}, \quad 0 \leq \tau < j,
\end{equation}
is even, and the factor $W(g_{\tau}, g_{\tau + 1})$ in \eqref{eq:ugdef} by \eqref{eq:orthooddeven} is zero.  Therefore, $u(g) = 0$, and this holds for all elements $u(g) \in $ \eqref{eq:ugdef} so $\sum u(g) = 0$.
\end{proof}

\begin{lem} \label{lem:tjeven} 
If $j$ is even and $W$ is real, then for $n \geq N^*$ ($N^*$ as defined in \eqref{eq:newnstar}),
\begin{equation}
T_j(n) \declare T_j(n; iW) \text{ is real.} \label{eq:tevenreal}
\end{equation}
\end{lem}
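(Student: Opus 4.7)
The plan is to separate out the factor $i^j$ from $U_j(z;iW)$—which, since $j$ is even, is a real scalar $(-1)^{j/2}$—and then to show the remaining contour integral equals its own complex conjugate, by exploiting the reflection symmetry $z\mapsto\bar z$ of the contour together with the reality of $W$ and of the unperturbed spectrum $\{z_k\}$.

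First I would unwind the recursion \eqref{eq:udef} to write $U_j(z;iW) = i^j\, V_j(z)$, where $V_j(z) := R^0(z)\,W\,R^0(z)\,W\cdots W\,R^0(z)$ contains $j$ factors of $W$; with $i^j\in\RR$ it then suffices, by \eqref{eq:tjdef}, to prove that
\[
I \;:=\; \frac{1}{2\pi i}\,\tra\!\int_{\bd{\mathcal{D}_n}}(z-z_n)\,V_j(z)\,dz
\]
is real. Next I would establish the entry-wise identity $\overline{V_j(z)_{mm}} = V_j(\bar z)_{mm}$, and hence $\overline{\tra V_j(z)} = \tra V_j(\bar z)$. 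In the Hermite basis the matrix $W=(w_{jk})$ has real entries (since $w$ is real and the Hermite functions $h_n$ are real-valued), while $R^0(z)$ is diagonal with entries $1/(z-z_k)$ and $z_k=2k+1\in\RR$, so $\overline{R^0(z)_{kk}} = R^0(\bar z)_{kk}$. Conjugating each factor in the product that defines $V_j(z)_{mm}$ and then summing over $m$ yields the claim.

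Finally, since $z_n\in\RR$ and $\mathcal{D}_n$ (a square centered at $z_n$) is symmetric about the real axis by \eqref{eq:dn}, the map $z\mapsto\bar z$ carries $\bd{\mathcal{D}_n}$ to itself while reversing orientation. Setting $g(z) := (z-z_n)\,\tra V_j(z)$, the previous step together with $z_n\in\RR$ gives the reflection identity $g(\bar z) = \overline{g(z)}$. Parametrizing $\bd{\mathcal{D}_n}$ counterclockwise by $\gamma:[0,1]\to\CC$ and writing $\sigma(t):=\overline{\gamma(t)}$ (which traces $\bd{\mathcal{D}_n}$ clockwise), a direct computation gives
\[
\overline{\int_\gamma g(z)\,dz}\;=\;\int_0^1 g(\sigma(t))\,\sigma'(t)\,dt\;=\;\int_\sigma g\,dz\;=\;-\int_\gamma g\,dz,
\]
i.e., the line integral is purely imaginary. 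Combined with $\overline{1/(2\pi i)} = -1/(2\pi i)$, the two sign flips cancel and $\bar I = I$; multiplying back by the real scalar $i^j$ completes the proof. The only real obstacle is keeping track of three separate sign flips (in conjugating $dz$, in reversing the contour orientation, and in conjugating the prefactor $1/(2\pi i)$); once the reflection identity $g(\bar z)=\overline{g(z)}$ is in hand, the rest is a routine symmetry argument.
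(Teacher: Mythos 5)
Your proof is correct, but it takes a genuinely different route from the paper. The paper expands $\tra U_j$ termwise into matrix-element products $u(g) = p(g)\,J(g)$ (see \eqref{eq:ugdef}--\eqref{eq:fgzdef}), pulls out the real scalar $p(g) = \prod_t iW(g_t,g_{t+1}) = (-1)^{j/2}\cdot(\text{real})$, and then appeals to the explicit residue computation of Lemma~\ref{lem:jgbehave} to see that each contour integral $J(g)$ is a real number (since it is either $0$ or a first-order residue of a rational function with real poles and real data). You instead give a global symmetry argument: after factoring out $i^j\in\RR$, the integrand $g(z)=(z-z_n)\tra V_j(z)$ satisfies the reflection identity $g(\bar z)=\overline{g(z)}$ because $W$ has real matrix entries in the Hermite basis and $R^0(z)$ is diagonal with real poles $z_k$, and the contour $\bd{\mathcal{D}_n}$ is invariant under $z\mapsto\bar z$ with orientation reversed; the two sign flips (from orientation and from $\overline{1/(2\pi i)}=-1/(2\pi i)$) cancel, giving $\bar I=I$. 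Your argument is cleaner in that it avoids the pole-order case analysis entirely and never needs to know the explicit form of the residues; the paper's more computational route has the side benefit that Lemma~\ref{lem:jgbehave} is reused later (e.g.\ in \eqref{eq:t2resolve}, \eqref{eq:t3even}) to derive the explicit formulas for $T_2(n)$ and $T_3(n)$. One small bookkeeping remark: you describe ``three separate sign flips,'' but conjugating $dz$ and reversing orientation are the same bookkeeping step (the conjugate parametrization $\sigma(t)=\overline{\gamma(t)}$ already traces the reversed contour), so there are really two cancelling signs; the computation you actually perform gets this right.
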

\begin{proof}
Now the number $j = 2q$ of $W$-factors in the product \eqref{eq:ugdef} is even for any $g \in \ZZ_+^{(j-1)}$ so 
\begin{equation}
p(g) = \prod_{t = 0}^{j-1} iW(g_t, g_{t + 1}) = (-1)^q \cdot \text{(real number)}
\end{equation}
is real by \eqref{eq:wtype1} or \eqref{eq:wtype2}. 
and
\begin{gather}
u(g) = p(g) \cdot J(g), \text{ where} \label{eq:udecomp} \\ 
J(g) = \frac{1}{2\pi i} \int\limits_{\bd{\mathcal{D}_n}} F_g(z) \, dz \label{eq:jdef}, \text{ with} \\
F_g(z)  = (z - z_n) \cdot \frac{1}{(z - z_m)^2} \prod_{t = 1}^{j-1} \frac{1}{z - z_{g_t}}  \label{eq:fgzdef}
\end{gather}
For any $g$ this integral is real number [see the next lemma].  Therefore, $T_j(n)$ --- as a sum of (absolutely) convergent series with real terms --- is a real number.
\end{proof}
This completes the proof of Proposition 4.1. 

We will need more specific information about integrals $J(g)\in$ \eqref{eq:jdef}.  The following is true. 

\begin{lem} \label{lem:jgbehave} 
If $m = n$, and $n \geq N^*$ ($N^*$ as defined in \eqref{eq:newnstar}),
\begin{align}
J(g) = 0 \quad \text{ if at least one } \quad g (\tilde{t}\,  ) = n, \label{eq:jequalzero} \\
\intertext{and}
J(g) = \parenpair{ \prod_{t = 1}^{j-1}2 (n - g_t)}^{-1} \quad \text{otherwise} \label{eq:jequalnonzero}.
\end{align}
If $m \neq n$,
\begin{equation}
J(g) = 0 \,\text{ if }\,\# \tau(g) \neq 2, \, \, \text{where $\tau(g) = \bracepair{t: g_t = n}$,} \label{eq:jnonequalzero} 
\end{equation}
and
\begin{equation}
J(g) = \frac{1}{4(n-m)^2}\parenpair{ \prod_{t \not\in \tau(g)}^{j-1}2 (n - g_{t})}^{-1}  \text{ if } \# \tau(g) = 2. \label{eq:jnonequalnonzero}
\end{equation}
\end{lem}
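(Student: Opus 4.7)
My plan is to evaluate $J(g)$ by the residue theorem. Since $F_g$ is a rational function whose only possible poles lie at points of the arithmetic progression $\{z_k = 2k + 1\}_{k \geq 0}$, and since $\mathcal{D}_n$ is a square of side length $1$ centered at $z_n$ while consecutive $z_k$ are separated by distance $2$, the only spectral point inside $\mathcal{D}_n$ is $z_n$ itself. Hence
\[
J(g) = \operatorname{Res}(F_g; z_n),
\]
and the entire lemma is reduced to a local residue computation at a single point.

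I would then read off the order of $F_g$ at $z_n$ directly from its defining product: the numerator factor $(z - z_n)$ is a simple zero; the factor $(z - z_m)^{-2}$ is a double pole precisely when $m = n$; and each $t \in \tau(g) = \{t : g_t = n\}$ contributes an extra simple pole from $(z - z_{g_t})^{-1}$. The net order of $F_g$ at $z_n$ is therefore $1 - 2\cdot\mathbf{1}_{\{m = n\}} - \#\tau(g)$, which is $\geq 0$ (so $F_g$ is holomorphic at $z_n$ and $J(g) = 0$) exactly when $m \neq n$ with $\#\tau(g) \leq 1$; this disposes of the easy half of \eqref{eq:jnonequalzero}. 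The two remaining simple-pole subcases are then evaluated by the formula $\operatorname{Res}(F_g; z_n) = \lim_{z \to z_n}(z - z_n)F_g(z)$: for $m = n$ with $\tau(g) = \emptyset$ one cancels $(z - z_n)$ against one factor of $(z - z_m)^{-1}$ and substitutes $z = z_n$ to obtain $\prod_t [2(n - g_t)]^{-1}$, matching \eqref{eq:jequalnonzero}; for $m \neq n$ with $\#\tau(g) = 2$ the analytic factor $(z - z_m)^{-2}$ evaluates to $[2(n - m)]^{-2}$ and the remaining simple-pole factors contribute $\prod_{t \notin \tau(g)} [2(n - g_t)]^{-1}$, matching \eqref{eq:jnonequalnonzero}.

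The delicate point is the multiple-pole cases ($m = n$ with $\#\tau(g) \geq 1$, or $m \neq n$ with $\#\tau(g) \geq 3$), for which the lemma asserts $J(g) = 0$. Here $F_g$ has a pole of order $k \geq 2$ at $z_n$ and the residue is $\operatorname{Res}(F_g; z_n) = \frac{1}{(k-1)!}\lim_{z \to z_n}\frac{d^{k-1}}{dz^{k-1}}\bigl[(z - z_n)^k F_g(z)\bigr]$. The bracketed expression is of the form $(z - z_n)^{\varepsilon} h(z)$ with $\varepsilon \in \{0, 1\}$ and $h$ analytic and nonvanishing at $z_n$, so a naive differentiation does not give zero in general. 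I expect this to be the main obstacle: I would need either to identify an implicit restriction on the admissible $g$'s (possibly coming from the parity/oddness structure used in Lemma~\ref{lem:tjeven}) that rules these configurations out, or to find a cancellation that only emerges after summation over a suitable subclass of $g$'s. Only once that mechanism is pinned down would the full statement be rigorously established; the rest is routine residue calculus.
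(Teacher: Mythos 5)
Your instinct about the multiple-pole cases is exactly right, and the obstacle you flagged is not a gap in your argument---it is a gap in the lemma. The paper's own proof is the one-line assertion you (rightly) refused to accept: ``In the cases \eqref{eq:jequalzero} and \eqref{eq:jnonequalzero} the pole's order $\geq 2$ or $F_g(z)$ is analytic on $\overline{\mathcal{D}_n}$, so $J(g) = 0$.'' But a pole of order $\geq 2$ at $z_n$ has nonzero residue whenever the analytic cofactor is non-constant, so this step is a non sequitur. A concrete counterexample, already at $j = 3$: take $m = n$ and $g = (n, k)$ with $k \neq n$; then
\[
F_g(z) = (z - z_n)^{-2}(z - z_k)^{-1}, \qquad J(g) = \operatorname{Res}(F_g; z_n) = \frac{d}{dz}\Bigl[(z - z_k)^{-1}\Bigr]\Big|_{z = z_n} = -\frac{1}{4(n-k)^2} \neq 0,
\]
contradicting \eqref{eq:jequalzero}. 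Neither of the rescue mechanisms you contemplated works: for the even potential $v^e$ the weights $W(n,n) = 2a_n^2$ and $W(n,k) = 2a_n a_k$ (for $n - k$ even) are nonzero, so parity does not exclude these $g$'s; and carrying the missing residues through the $T_3(n;v^e)$ computation shows they do not cancel identically---one obtains $T_3(n; v^e) = 2a_n^2\bigl[\evencasesig(n)^2 - a_n^2\tau'(n)\bigr]$, a sign flip relative to \eqref{eq:t3even}. The correct part of your argument---analytic or simple-pole cases, identified by the net order $1 - 2\cdot\mathbf{1}_{\{m=n\}} - \#\tau(g)$, and evaluated by $\lim_{z \to z_n}(z - z_n)F_g(z)$---coincides with what the paper actually does, and those cases are the only ones stated correctly.

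The good news is that the error in the lemma does not propagate to the paper's main conclusions. Lemma~\ref{lem:tjeven} only needs $J(g) \in \RR$, which holds for every $g$ (higher-order poles included) because $F_g$ is a rational function with real coefficients and real poles; and the explicit expression for $T_3(n; v^e)$ is used only to establish the order estimate $T_3(n) = O(n^{-3/2})$, which a sign change in one of its two terms does not affect. But as a freestanding lemma, \eqref{eq:jequalzero} and the $\#\tau(g) \geq 3$ part of \eqref{eq:jnonequalzero} are false, and you were right to stop where you did rather than manufacture a justification.
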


\begin{proof}
The integrand \eqref{eq:fgzdef} of \eqref{eq:jdef}  could have a pole inside of $\mathcal{D}_n$ only at $z_n = 2n + 1$.  In the cases \eqref{eq:jequalzero} and \eqref{eq:jnonequalzero} the pole's order $\geq 2$ or $F_g(z)$ is analytic on $\overline{\mathcal{D}_n}$, so $J(g) = 0$.  In the cases \eqref{eq:jequalnonzero}, \eqref{eq:jnonequalnonzero} the pole's order is one and $J(g)$ is the residue of $F_g(z)$ at $z_n$.  
\end{proof}

\subsection{An odd potential \texorpdfstring{$v^o$}{v-super-o} \texorpdfstring{in \eqref{eq:pointodd}}{}} \label{subsect:oddpot}  As it is noticed in \eqref{eq:orthooddeven}, 
\begin{gather}
\begin{aligned}
v^0_{jk} &= \anglepair{(\pointmass{x-b} - \pointmass{x + b}) h_j, h_k} \\
 & = [1 - (-1)^{j + k}] a_j a_k
\end{aligned} \label{eq:v0set}\\
= \begin{cases} 0, & \text{ if } j + k \text{ even} \\
2 a_j a_k, & \text{ if } j + k \text{ odd} \end{cases} \label{eq:v0case} \\
\intertext{where}
a_k = h_k(b), \quad k = 0, 1, \dotsc \label{eq:akdef}
\end{gather}
With $b > 0$ fixed, from now on we will use $(a_k)$ as in \eqref{eq:akdef}.  By Lemmas~\ref{lem:tjodd} and \ref{lem:jgbehave} for  $n \geq N^*$
\begin{align}
T_j(n; v^0) \declare T_j(n)& = 0 \, \, \, \, \text{ if } j \text{ odd;} \label{eq:orthooddevenext} \\
\intertext{in particular,}
T_1(n) &= 0 , \quad T_3(n) = 0. \label{eq:orthospec} 
\end{align}
To evaluate $T_2(n)$ we'll sum up (we did it in \eqref{eq:ugdef} in general setting) Cauchy integrals of functions
\begin{equation}
(z - z_n) \cdot \frac{1}{z - z_m} \cdot v_{mk}^0 \cdot \frac{1}{z - z_k} \cdot v_{km}^{0} \cdot \frac{1}{z - z_m} \label{eq:vgdef}
\end{equation}
If $m \neq n$ it is analytic for any $k$ so Cauchy integral is zero.  If $m = n$
\begin{equation}
v_{mk}^0 = 0 \quad \text{ if } n - k \text{ is even.} \label{eq:orthodiff}
\end{equation}
Therefore, by Lemma~\ref{lem:jgbehave}, $j = 2$, with $z_n - z_k = 2(n-k)$, 
\begin{align}
T_2(n; v^0) \declare T_2(n) & = \sum_{\substack{ k = 0 \\ k - n \text{ odd} }}^{\infty} \frac{v_{nk}^0 v_{kn}^0}{z_n - z_k} =  \sum_{\substack{ k = 0 \\ k - n \text{ odd} }}^{\infty}  \frac{2 a_n a_k \cdot 2 a_k a_n }{2(n - k)}  = 2 a_n^2 \oddcasesig(n) \label{eq:t2resolve}\\
\intertext{where}
\oddcasesig(n) &=  \sum_{\substack{ k = 0 \\ n - k \text{ odd} }}^{\infty}  \frac{a_k^2 }{n - k} \label{eq:sigdef}
\end{align}
Technical analysis of the sequence $\oddcasesig(n)$ is done in the next section.  Of course, it is based on asymptotics of Hermite polynomials (or functions), \eqref{eq:hepoly1} -- \eqref{eq:hefcnform}.  It will bring us the proof of the main result of this paper:
\begin{thm} \label{thm:eigendistr} 
The operator
\[
L = - \frac{d^2}{dx^2} + x^2 + s [ \pointmass{x-b} - \pointmass{x + b}], b > 0, s \in \CC
\]
has a discrete spectrum $\sigma(L)$.  

There exists an absolute constant $D$ such that with 
\begin{equation}
N^* = \left( D \vert s \vert \log e \vert s \vert \right)^2 \label{eq:nstarnewdef}
\end{equation}
all eigenvalues $\lambda_n = \lambda_n(L)$ in the half-plane $\bracepair{z \in \CC: \Re z > N^*}$ are simple, and 
\begin{align}
\lambda_n &= (2n + 1) + s^2 \, \frac{\mysteryletterB(n)}{n} + \widetilde{\rho}(n), \quad \vert \widetilde{\rho}(n) \vert \leq C\frac{\log n}{n^{3/2}}\label{eq:lamasymp}\\
\intertext{where}
\mysteryletterB_n &= \frac{1}{\mysteryconstA} \left[ (-1)^{n + 1} \sin (2 b \sqrt{2n}) - \frac{1}{2} \sin(4b \sqrt{2n}) \right] \label{eq:lamasymp.b}
\end{align} 
\end{thm}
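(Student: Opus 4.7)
The plan is to specialize the resolvent expansion machinery of Section~\ref{sec:evalperturb} to $\alpha = 1/4$ (the value appropriate for $\delta$-perturbations via \eqref{eq:hsizebd} and \eqref{eq:wpairbds}), exploit the vanishing of odd-index terms $T_j(n)$ for the odd potential $v^o$ (Lemma~\ref{lem:tjodd}), and then extract the leading behaviour of the surviving $T_2(n)$ from classical Plancherel--Rotach asymptotics for Hermite functions at an interior point $b$.

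First, with $W = s v^o$ one has $C_0 = C|s|$ in \eqref{eq:wcond}, and formula \eqref{eq:nstarchoice} with $\beta = 1/2$ and $t = 2C|s|\,M(1/4)$ produces a threshold of the advertised form $N^* = (D|s|\log e|s|)^2$, beyond which all norm inequalities of Proposition~\ref{prop:normbds} apply. For $n \geq N^*$ I apply Corollary~\ref{cor:lamest} with $q = 3$; since $T_1(n) = T_3(n) = 0$ by Lemma~\ref{lem:tjodd}, one obtains
\[
\lambda_n = (2n+1) + T_2(n) + \widetilde\rho(n), \qquad |\widetilde\rho(n)| = |r_3(n)| = O_s\!\bigl((\log n)^4/n^2\bigr),
\]
which is $o(\log n / n^{3/2})$ and so is absorbed into the claimed error. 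The theorem therefore reduces to an asymptotic evaluation of $T_2(n)$ up to accuracy $O(\log n / n^{3/2})$.

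By \eqref{eq:t2resolve}--\eqref{eq:sigdef} together with the homogeneity $T_j(n; s v^o) = s^j T_j(n; v^o)$, one has $T_2(n) = 2 s^2 a_n^2 \widetilde\sigma(n)$ with $a_k = h_k(b)$ and $\widetilde\sigma(n) = \sum_{k:\, n-k\ \mathrm{odd}} a_k^2/(n-k)$. I substitute the interior-point Plancherel--Rotach formula, which after squaring $h_n(b) \sim (\pi \sqrt{2n})^{-1/2} \cos(\Phi_n)$ and using $2\cos^2\Phi_n = 1 + \cos 2\Phi_n$ with $\Phi_n = n\pi/2 + b\sqrt{2n} + O(n^{-1/2})$ reads
\[
a_n^2 = \frac{1}{2\pi\sqrt{2n}} \Bigl[1 + (-1)^n \sin\!\bigl(2b\sqrt{2n}\bigr)\Bigr] + O(n^{-3/2}).
\]
Changing variables $m = n-k$ over odd $m \neq 0$ (so $(-1)^{n-m} = -(-1)^n$), the sum $\widetilde\sigma(n)$ splits into a smooth piece and an oscillatory piece. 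The smooth piece is odd in $m$ to leading order and contributes only $O(\log n / n^{3/2})$ after the expansion $(2(n-m))^{-1/2} = (2n)^{-1/2}(1 + m/(2n) + \cdots)$. The oscillatory piece is treated by Taylor-expanding $\sqrt{2(n-m)} = \sqrt{2n} - m/\sqrt{2n} + O(m^2/n^{3/2})$ inside the sine, applying product-to-sum identities, and evaluating the resulting Fourier-type sums $\sum_{m\ \mathrm{odd}} e^{i\theta m}/m$ (a standard sawtooth series); the leading contribution is of order $n^{-1/2}\cos(2b\sqrt{2n})$ with an explicit constant.

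Combining, multiplying by $2a_n^2$, and applying $2\sin\alpha\cos\alpha = \sin 2\alpha$ to the cross terms yields precisely the two sines $\sin(2b\sqrt{2n})$ and $\sin(4b\sqrt{2n})$ appearing in \eqref{eq:lamasymp.b}, with the $1/(2\pi)$ prefactor and the $-\tfrac12$ coefficient on the second sine, up to a remainder of order $O(\log n/n^{3/2})$. The principal technical obstacle is exactly this step: the careful asymptotic evaluation of $\widetilde\sigma(n)$ and the meticulous bookkeeping of constants and phases needed to reproduce $\kappa(n)$ in the stated form --- in particular verifying that the $-\tfrac12\sin(4b\sqrt{2n})$ term arises cleanly as the product of the two oscillatory contributions and that no further terms of size $O(1/n)$ survive beyond $\kappa(n)/n$.
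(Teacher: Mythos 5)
Your outer scaffolding matches the paper's proof exactly: set $\alpha = 1/4$, obtain $N^*$ of the stated form from \eqref{eq:nstarchoice}, apply Corollary~\ref{cor:lamest} with $q=3$, invoke Lemma~\ref{lem:tjodd} to kill $T_1$ and $T_3$, use the homogeneity $T_j(n; s v^o) = s^j T_j(n; v^o)$, and reduce to $T_2(n) = 2 s^2 a_n^2\,\widetilde\sigma(n)$ with $a_k = h_k(b)$. This is precisely the route the paper takes.

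However, the technical heart is where your proposal has concrete problems. First, your Plancherel--Rotach consequence for $a_n^2$ is wrong on two counts. With $h_n(b) = \frac{2^{1/4}}{\pi^{1/2}} n^{-1/4}\cos(b\sqrt{2n+1} - n\pi/2) + \cdots$ one gets
\[
a_n^2 = \frac{1}{\pi\sqrt{2n}}\Bigl[1 + (-1)^n \cos\bigl(2b\sqrt{2n}\bigr)\Bigr] + O(n^{-1}),
\]
not $\frac{1}{2\pi\sqrt{2n}}[1 + (-1)^n\sin(2b\sqrt{2n})]$: the prefactor is $\frac{1}{\pi}$, not $\frac{1}{2\pi}$, and the oscillatory factor is a cosine, not a sine (this is \eqref{eq:an2repeat} in the paper). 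If you carry your version through the computation of $T_2$, you get a non-oscillatory $-\tfrac12$ term, a $\cos(4b\sqrt{2n})$ in place of $\sin(4b\sqrt{2n})$, and an overall $\frac{1}{4\pi}$ prefactor --- none of which match $\kappa(n)$.

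Second, and more fundamentally, your evaluation of $\widetilde\sigma(n)$ is not viable as sketched. The weight in the sum is $a_k^2/(n-k) \sim (2(n-m))^{-1/2}/m$ with $m = n-k$, and the range of $m$ is the asymmetric interval $\{m \text{ odd}: -\infty < m \le n-1\}$. The weight $(n-m)^{-1/2}$ is \emph{not} approximately constant or approximately even in $m$ over this range, so the ``smooth piece is odd in $m$'' argument does not close; the naive expansion $(n-m)^{-1/2} = n^{-1/2}(1 + m/(2n) + \cdots)$ diverges term-by-term when summed over the full range of $m$ (already $\sum_m 1$ diverges). The paper's Lemma~\ref{lem:zetabds} shows $\xi(n) = O(1/n)$ only after converting the two pieces of the sum into integrals $I_1, I_2$ and verifying the nontrivial identity $A = B = 2\log(1+\sqrt 2)$ \eqref{eq:iscancel}; this cancellation is the content of the lemma and does not follow from parity. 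Likewise, the oscillatory piece is not a clean ``sawtooth series'': the variable weight $(n-m)^{-1/2}$ and the need to control the Taylor remainder for $|m|$ comparable to $n$ force the paper's careful integral-comparison machinery (Lemma~\ref{lem:suminterr}, Proposition~\ref{prop:es2bds}, and the change-of-variables in Section~\ref{subsect:f1bds}) to extract the leading coefficient $\frac{(-1)^{n+1}}{2}\frac{\sin(2b\sqrt{2n})}{\sqrt{2n}}$ with error $O(\log n/n)$. Your proposal acknowledges this is the ``principal technical obstacle'' but does not surmount it.

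In short: the framework and reductions are the same as the paper's; the core estimate of $\widetilde\sigma(n)$ (Lemma~\ref{lem:siginfo} / Proposition~\ref{prop:sigmatwiddlebounds}, which the paper explicitly calls the ``core of this manuscript'') is not established, and the auxiliary Hermite asymptotic you invoke carries constant and phase errors that would not reproduce $\kappa(n)$.
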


The proof of the theorem is based on the following lemma.

\begin{lem} \label{lem:siginfo} 
With $\oddcasesig(n) \in$ \eqref{eq:sigdef}
\begin{align}
\oddcasesig(n) & = (-1)^{n + 1}\frac{1}{2} \frac{\sin (2 b \sqrt{2n})}{\sqrt{2n}} + \rho(n), \label{eq:sigdata}\\
\vert \rho(n) \vert &\leq C \frac{\log n}{n} \label{eq:rhobd}
\end{align}
\end{lem}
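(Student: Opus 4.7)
My plan is to reduce the computation of $\oddcasesig(n)$ to a classical Fourier-series evaluation via the Plancherel--Rotach asymptotics of Hermite functions. For fixed $b$ and $k\to\infty$, the standard asymptotic reads
\[
h_k(b) = \sqrt{\tfrac{2}{\pi}}(2k+1)^{-1/4}\cos\theta_k + O(k^{-3/4}),\quad \theta_k = b\sqrt{2k+1} - \tfrac{k\pi}{2} + O(k^{-1/2}).
\]
Squaring and using $2\cos^2\theta_k=1+\cos(2\theta_k)$ yields
\[
a_k^2 = \frac{1}{\pi\sqrt{2k+1}}\bigl[1 + (-1)^k\cos(2b\sqrt{2k+1})\bigr] + O(k^{-1}).
\]
The $O(k^{-1})$ remainder, summed against $1/|n-k|$ over the odd-parity lattice, contributes at most $O((\log n)/n)$ (the bulk from $|n-k|\asymp 1$), which is absorbed in $\rho(n)$.

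The key simplification is the parity constraint $n-k\equiv 1\pmod 2$, which forces $(-1)^k = (-1)^{n+1}$ on the range of summation, pulling the sign out of the sum. After substitution,
\[
\oddcasesig(n) = \tfrac{1}{\pi}S(n) + \tfrac{(-1)^{n+1}}{\pi}\widetilde T(n) + O((\log n)/n),
\]
where $S(n)$ is the non-oscillatory sum with summand $1/[\sqrt{2k+1}(n-k)]$, and $\widetilde T(n)$ is the oscillatory one with $\cos(2b\sqrt{2k+1})/[\sqrt{2k+1}(n-k)]$. The main term will come from $\widetilde T(n)$; $S(n)$ must be shown to be an error.

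For $\widetilde T(n)$, I substitute $m=n-k$ and linearise for $|m|\ll n$ using $2b\sqrt{2(n-m)+1} = A - \alpha m + O(m^2/n^{3/2})$ with $A=2b\sqrt{2n+1}$, $\alpha=2b/\sqrt{2n+1}$. The cosine angle-subtraction formula splits the sum into two pieces. The $\cos A\cdot\sum\cos(\alpha m)/m$ piece vanishes by antisymmetry on the balanced portion $|m|\le n$ and contributes at most $O(1/n)$ from the asymmetric tail by Abel summation (using bounded partial sums of $\cos(\alpha m)$). The $\sin A\cdot \sum\sin(\alpha m)/m$ piece evaluates, on the balanced portion, to $\sin A \cdot \pi/2$ by the classical Fourier identity
\[
\sum_{m>0,\,m\text{ odd}}\frac{\sin(m\alpha)}{m} = \frac{\pi}{4}, \quad 0<\alpha<\pi,
\]
with Dirichlet truncation error $O(1/(n\alpha))=O(1/\sqrt n)$. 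Combining, $\widetilde T(n) = \tfrac{\pi\sin A}{2\sqrt{2n+1}} + O((\log n)/n)$.

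The non-oscillatory $S(n)$ is a priori only $O(1/\sqrt n)$, and showing it is actually $O((\log n)/n)$ is the most delicate step. I pair $k=n\pm(2j+1)$ and compare the paired interior sum with the asymmetric outer tail $\sum_{2j+1>n}[(2j+1)\sqrt{2(n+2j+1)+1}]^{-1}$. Under Riemann-sum approximation each becomes an explicit $O(1/\sqrt n)$ integral, and a direct evaluation (reducing both to $\log(4/\varepsilon)$ under a common $\varepsilon$-regularisation at the singularity) shows the $O(1/\sqrt n)$ pieces cancel, leaving $S(n)=O((\log n)/n)$ from the residual Euler--Maclaurin corrections. Assembling everything and replacing $\sqrt{2n+1}$ by $\sqrt{2n}$ in the main term (at cost $O(1/n)$) yields \eqref{eq:sigdata} with the bound \eqref{eq:rhobd}. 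The principal obstacle is uniform error control for $|m|$ comparable to $n$, where the Taylor linearisation breaks down and one must invoke summation-by-parts on the oscillatory factor $\cos(2b\sqrt{2k+1})$; the bookkeeping is delicate but uses no essentially new idea beyond the above.
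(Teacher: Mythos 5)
Your outline shares the paper's skeleton: replace $a_k^2$ by the Plancherel--Rotach asymptotics (the paper's Lemma~\ref{lem:sumest}), use the parity constraint to pull $(-1)^{n+1}$ out of the oscillatory piece (the passage from \eqref{eq:aksquareform2} to \eqref{eq:sigrenew}--\eqref{eq:etarenew}), and treat the non-oscillatory $\xi$ and oscillatory $\eta$ separately. But your evaluation of the oscillatory part is a genuinely different technique: you linearize the phase, $2b\sqrt{2(n-m)+1}\approx A-\alpha m$, and invoke the classical Fourier identity $\sum_{m>0,\,m\ \mathrm{odd}}\sin(m\alpha)/m=\pi/4$. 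The paper instead passes from sums to integrals via Lemma~\ref{lem:suminterr}, rationalizes the square root with the substitution $P\pm x = P(1\pm t)^2$, and finishes with the Dirichlet integral $\int_0^\infty(\sin x/x)\,dx=\pi/2$ together with the Riemann--Lebesgue bound \eqref{eq:riemleb}. Your route is shorter where it applies, but the linearization is only valid for $|m|\ll n^{3/4}$; the paper's change of variables is exact on the whole range and needs no splitting.

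That points to the first gap: you acknowledge the regime $|m|$ comparable to $n$ needs a separate argument and call it ``no essentially new idea,'' but it is one. There you must run Abel or van der Corput summation against the slowly varying phase $\cos(2b\sqrt{2k+1})$, with the weight $\bigl(|m|\sqrt{n}\bigr)^{-1}$, and verify that the contribution is $O((\log n)/n)$; that is a distinct estimate, and the bookkeeping at the transition scale is exactly what must be written out. The second and more serious gap is the non-oscillatory sum $S(n)$. Your claim that the paired interior sum and the tail each reduce ``to $\log(4/\varepsilon)$ under a common $\varepsilon$-regularisation'' is not right: neither of the corresponding integrals has a logarithmic singularity; both are genuinely finite and each is $O(1/\sqrt n)$ on its own. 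The cancellation down to $O(1/n)$ is an exact identity between two definite integrals, which the paper establishes by computing $I_1$ and $I_2$ explicitly in \eqref{eq:constBfinal} and \eqref{eq:constArep}, finding each equal to $2\log(1+\sqrt2)\,P^{-1/2}$, so that $A=B$ in \eqref{eq:iscancel}. That identity does not follow from a regularization heuristic, and without it you would only get $S(n)=O(1/\sqrt n)$, which is not enough for \eqref{eq:rhobd}. So the plan is sound in outline and the parity/main-term bookkeeping is correct, but the two analytical cores --- far-range oscillatory control and the exact non-oscillatory cancellation --- are precisely the steps left unproved.
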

The technical analysis of this sequence \eqref{eq:sigdef} and its variations is the core of this manuscript.  Its proof is given in the sections which follow.  The final steps to prove Theorem~\ref{thm:eigendistr} are done in Section~\ref{subsect:evenintro}, \eqref{eq:t2repeatresolve} -- \eqref{eq:helperterm3}.  

\subsection{An even potential \texorpdfstring{$v^e \in$ \eqref{eq:pointeven}}{v-super-e}} \label{subsect:evenpot}
Recall \eqref{eq:orthoevenodd}; now 
\begin{align}
v^e_{jk} &= [1 + (-1)^{j + k}] a_j a_k \label{eq:veset}\\
&= \begin{cases} 0, & \text{ if } j + k \text{ odd} \\
2 a_j a_k, & \text{ if } j + k \text{ even} \end{cases} \label{eq:vecase} \\
\end{align}
Therefore, by Lemma~\ref{lem:jgbehave}, $j = 1$, 
\begin{equation}
T_1(n; v^e) \declare T_1(n) = v_{nn}^e = 2 a_n^2 \label{eq:t1even}
\end{equation}
and [compare \labelcref{eq:orthooddevenext,eq:orthospec,eq:vgdef,eq:orthodiff,eq:t2resolve,eq:sigdef}]
\begin{align}
T_2(n; v^e) \declare T_2(n) &= 2 a_n^2 \evencasesig(n),\label{eq:t2even}  \\
\evencasesig(n) & =  \sideset{}{'}\sum_{\substack{k = 0\\ n - k \text{ even}}}^{\infty} \frac{a_k^2}{n - k} \label{eq:sigeven},
\end{align}
where $\sum\nolimits^{'}$ means that $k \neq n$.  

But for the even potential there is no trivial claim $T_3(n) = 0$.  We could make formal references to Lemma~\ref{lem:jgbehave} but let us again look into those terms which form the sum-trace $T_3(n)$.  We integrate functions
\begin{equation}
F = (z - z_n) \cdot \frac{1}{z - z_m} \cdot 2a_m a_k \cdot \frac{1}{z - z_k} \cdot 2 a_k a_{\ell} \cdot \frac{1}{z - z_{\ell}} \cdot 2 a_{\ell} a_m \cdot \frac{1}{z - z_m} \label{eq:fgdef}
\end{equation}  
excluding (by \eqref{eq:vecase}) triples $(m, k, \ell)$ if at least one of the differences $m - k$, $k - \ell$, $\ell - m$ is odd.  

If $m = n$ then we can take only $k, \ell \neq n$, otherwise the order of the pole at $z_n$ would be $\geq 2$ and Cauchy integral \eqref{eq:jdef} be zero.  Then the partial sum of \eqref{eq:tjdefrep} over triples 
\[
\bracepair{(m, k, \ell) \vert m = n, k \neq n, \ell \neq n, k - n, \ell  - n \text{ even}}
\]
would be 
\begin{align}
2 a_n^2 \sum_{\substack{k, \ell \\ n - k, \\ n - \ell \text{ even}}}^{\prime} \frac{a_k^2 a_{\ell}^2}{(n - k)(n - \ell)} = \label{eq:ansumcase1quest}\\
= 2 a_n^2 \left( \evencasesig(n) \right)^2. \label{eq:ansumcase1ans}
\end{align}

If $m \neq n$ Cauchy integral of $F \in$ \eqref{eq:fgdef} is not zero only if $k = \ell = n$, i.e., if we have two (and only two) zeros in the denominator to balance the factor $(z - z_n)$.  This set of triples
\begin{equation}
\bracepair{(m, k, \ell) \vert m \neq n, k= \ell = n, m  - n \text{ even}} \label{eq:mklcase2}
\end{equation}
leads to the subsum in $T_3(n)$ coming from \eqref{eq:fgdef}
\begin{equation}
2 a_n^4 \sideset{}{'}\sum_{\substack{m = 0 \\ m - n \text{ even}}}^{\infty} \frac{a_m^2}{(n - m)^2} = 2 a_n^4 \tau^{\prime}(n) \label{eq:ansumcase2}
\end{equation}
If we combine \eqref{eq:ansumcase1quest} -- \eqref{eq:ansumcase2} we conclude that
\begin{equation}
T_3(n; v^e) = 2 a_n^2 \left[ \sigma^{\prime}(n)^2 + a_n^2 \tau^{\prime}(n) \right] \label{eq:t3even}
\end{equation}
We'll analyze the sequences $\evencasesig$, $\tau^{\prime}$ later as well.  

\section{Inequalities and technical analysis of the term \texorpdfstring{$T_2(n)$}{T2(n)} in the case of an odd two-point \texorpdfstring{$\delta$}{delta}-potential} \label{sec:manyineqs}

First of all, let us recall the asymptotics of Hermite polynomials [see (8.22.8) in Szeg\H{o}, \cite{Szego}].

With $M = 2m + 1$

\begin{equation}
\begin{aligned}
\frac{\Gamma \left( \frac{m}{2} + 1 \right)}{\Gamma (m + 1)} e^{-x^2 / 2} H_m(x) &=& &\cos \left( M^{1/2}x - m \frac{\pi}{2} \right) + \\
&&+& \frac{x^3}{6} M^{-1/2} \sin \left( M^{1/2}x - m \frac{\pi}{2} \right) + O \left( \frac{1}{m} \right).
\end{aligned} \label{eq:hepoly1}
\end{equation}

The normalized Hermite functions $h_m(x)$, $\int h_m^2(x) \, dx = 1,$ are

\begin{equation}
h_m(x) = (m! 2^m \sqrt{\pi} )^{-1/2} H_m(x) e^{-x^2/2} \label{eq:hefcndef}
\end{equation}
so $(m \geq 1)$
\begin{equation}
\begin{aligned}
h_m(x) = \frac{2^{1/4}}{\pi^{1/2}} \frac{1}{m^{1/4}}&  \left[ \cos \left( x \sqrt{2m + 1} - m \frac{\pi}{2} \right) \right.\\
 &+ \left. \frac{x^3}{6} \frac{1}{\sqrt{2m + 1}} \sin \left( x \sqrt{2m + 1} - m \frac{\pi}{2} \right)  + O \left(\frac{1}{m} \right) \right]
\end{aligned} \label{eq:hefcnform}
\end{equation}

Of course, this information is crucial because
\begin{equation}
a_j = h_j(b) \label{eq:haeq}
\end{equation}
and by \eqref{eq:sigdef} 
\begin{equation}
\oddcasesig(n) = \sum_{\substack{k = 0 \\ n - k \text{ odd}}} \frac{a_k^2}{n - k} \label{eq:sigoddrevise}
\end{equation}

\subsection{} \label{subsect:chopandpeel} We will chop and peel this sum by getting ``error'' terms $\rho$'s (with indices) of order $\disp O \left( \frac{\log n}{n}\right).$  After finitely many steps we'll sum up all these error-terms into $\rho(n)$ in \labelcref{eq:sigdata,eq:rhobd}.

First adjustment [to make us flexible to have the denominator $m^{1/4}$ without $m$ being zero] is to take away in \eqref{eq:sigoddrevise} the term with $k = 0$ if $n$ is odd, i.e.
\begin{equation}
\rho_1 = \frac{a_0^2}{n} = \frac{1}{\pi} e^{-b^2} \cdot \frac{1}{n} = O \left( \frac{1}{n}\right). \label{eq:rho1expr}
\end{equation}
Maybe, more drastic is to change $(a_k)$ to the first term $a_k^{\prime}$ in \eqref{eq:hefcnform} so

\begin{align}
a_k^{\prime} {}^2 &= \frac{2^{1/2}}{\pi} \frac{1}{\sqrt{k}} \cos^2 \left( b \sqrt{2k + 1} - k \frac{\pi}{2} \right) \label{eq:aksquareform}\\
& = \frac{1}{\pi} \cdot  \frac{1}{\sqrt{2k}} \left( 1 + (-1)^k \cos 2b \sqrt{2k + 1}\right) \label{eq:aksquareform2}
\end{align}
It will help us but let us explain first that the following is true.

\begin{lem} \label{lem:sumest}
Let $\substituteseq_k$, $\substituteseq_k^{\prime}$ be two sequences such that 

\begin{subequations}
\renewcommand{\theequation}{\theparentequation.\roman{equation}}
\begin{align}
\vert \substituteseq_k \vert, \vert \substituteseq_k^{\prime} \vert \leq \frac{C}{k^{1/4}} \label{eq:alphcond}\\
\delta_k = \substituteseq_k^{\prime} - \substituteseq_k, \quad \vert \delta_k \vert \leq \frac{C}{k^{3/4}} \label{eq:alphdiffcond} 
\end{align} \label{eq:alphakcond}
\end{subequations}
Then for their transforms
\begin{equation}
A(n) = \sum_{\substack{k = 1\ \\ n - k \text{ odd}}}^{\infty} \frac{\substituteseq_k^2}{n - k}, \quad A^{\prime}(n) = \sum_{\substack{k = 1 \\ n - k \text{ odd}}}^{\infty} \frac{\substituteseq_k^{\prime} {}^2}{n - k} \label{eq:ansdef}
\end{equation}
we have
\begin{equation}
\vert A_n \vert, \vertpair{A_n^{\prime}} \leq C \frac{\log n}{\sqrt{n}} \label{eq:ansbds}
\end{equation}
and
\begin{equation}
\vert A_n - A_n^{\prime} \vert \leq C \frac{\log n}{n} \label{eq:andiffsbd}
\end{equation}
\end{lem}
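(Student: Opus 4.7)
The plan is to reduce both estimates to the auxiliary sum
\[
s(\beta) = \sum_{\substack{k=0 \\ k \neq n}}^{\infty} \frac{1}{(1+k)^\beta \, |n-k|}
\]
already controlled by Remark~\ref{rem:sbds}, and then apply the bounds \eqref{eq:sbetasmall}--\eqref{eq:sbetalarge} with two different values of $\beta$.

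For the first inequality \eqref{eq:ansbds}, I would simply bound term-by-term. By the hypothesis \eqref{eq:alphcond} we have $\alpha_k^2 \le C^2 / \sqrt{k}$ (and likewise for $\alpha_k'$), so
\[
|A(n)| \le C^2 \sum_{\substack{k=1 \\ n-k \text{ odd}}}^{\infty} \frac{1}{\sqrt{k}\, |n-k|} \le C^2 \, s(1/2).
\]
By \eqref{eq:sbetasmall} with $\beta = 1/2$ this is at most $C^2 M(1/2) \log(en) / \sqrt{n}$, which gives \eqref{eq:ansbds} after adjusting the constant; the estimate for $|A'(n)|$ is identical.

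For the difference \eqref{eq:andiffsbd}, the key observation is the telescoping identity
\[
\alpha_k^2 - \alpha_k'^{\,2} = (\alpha_k - \alpha_k')(\alpha_k + \alpha_k') = -\delta_k \, (\alpha_k + \alpha_k').
\]
Combining \eqref{eq:alphcond} with \eqref{eq:alphdiffcond} then gives
\[
\bigl| \alpha_k^2 - \alpha_k'^{\,2} \bigr| \le |\delta_k|\bigl( |\alpha_k| + |\alpha_k'| \bigr) \le \frac{C}{k^{3/4}} \cdot \frac{2C}{k^{1/4}} = \frac{2 C^2}{k}.
\]
Thus
\[
|A(n) - A'(n)| \le 2 C^2 \sum_{\substack{k=1 \\ n-k \text{ odd}}}^{\infty} \frac{1}{k \, |n-k|} \le 2 C^2 \, s(1),
\]
and \eqref{eq:sbetasmall} applied at the boundary value $\beta = 1$ yields $s(1) \le M(1) \log(en) / n$, which gives \eqref{eq:andiffsbd}.

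I do not expect any serious obstacle: the content of the lemma is exactly that the quadratic part of the weighted sum behaves like the $\beta = 1/2$ convolution, while the difference gains a full factor of $k^{1/2}$ from the improved decay of $\delta_k$, pushing it into the $\beta = 1$ regime. The only mild point of care is that the sums are restricted to $n-k$ odd (and exclude $k=n$), but neither the factorization nor the majorants depend on parity, so restricting the index set only decreases the bound coming from Remark~\ref{rem:sbds}.
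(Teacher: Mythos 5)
Your proof is correct and follows the same route as the paper: both bound $|A(n)|$ by dropping the numerator to $C^2/\sqrt{k}$ and invoking Remark~\ref{rem:sbds} with $\beta = 1/2$, and both control the difference via the factorization $\alpha_k^2 - \alpha_k'^{\,2} = (\alpha_k+\alpha_k')(\alpha_k-\alpha_k')$ to gain the extra $k^{-1/2}$ decay, landing in the $\beta = 1$ case. No gaps to report.
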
 

\begin{proof}
By \eqref{eq:alphcond}

\begin{equation}
\begin{split}
\vert A(n) \vert &\leq \sum_{k \neq n} \frac{\vert \substituteseq_k \vert^2}{\vert n - k \vert} \leq \\
& \leq C^2 \sum \frac{1}{k^{1/2}} \frac{1}{\vertpair{n - k}} \leq \frac{C^{\prime}}{\sqrt{n}} \log (en)
\end{split} \label{eq:anineqs}
\end{equation}
The latter comes from \eqref{eq:sbetasmall} with $\disp \beta = \frac{1}{2}$; of course, the same is true for $A^{\prime}(n)$.  

Next, by \labelcref{eq:alphcond,eq:alphdiffcond}, 

\begin{equation}
\begin{split}
\vertpair{\substituteseq_k^2 - \substituteseq_k^{\prime} {}^2} & = \vertpair{\substituteseq_k + \substituteseq_k^{\prime}} \vertpair{\substituteseq_k - \substituteseq_k^{\prime}}\leq \\
& \leq \frac{2C}{k^{1/4}} \cdot \frac{C}{k^{3/4}} = \frac{\widetilde{C}}{k}
\end{split} \label{eq:squarediffs}
\end{equation}
Again, \eqref{eq:sbetasmall}, $\beta = 1$, implies
\begin{equation}
\vertpair{A(n) - A^{\prime}(n)} \leq \frac{C}{n} \log(en). \label{eq:andiffact}
\end{equation}

\end{proof}

Lemma~\ref{lem:sumest} shows that
\begin{equation}
\rho_2(n) = \left\vert \sum_{\substack{k = 0 \\ n - k \text{ odd}}}^{\infty} \frac{a_k^2 - a_k^{\prime} {}^2}{n - k} \right\vert = O \left( \frac{\log n}{n}\right)  \label{eq:rho2bd} 
\end{equation}
so $\rho_2$ is under the restriction \eqref{eq:rhobd}.

In this formula and later on we follow notation \eqref{eq:haeq} and \eqref{eq:sigoddrevise}.

Therefore, we can proceed with a focus on the sequence \eqref{eq:aksquareform2} and the series

\begin{equation}
\oddcasesigadjust(n) = \frac{1}{\pi \sqrt{2}} \sigma(n), \quad \sigma(n) = \sum_{\substack{k = 1 \\ n - k \text{ odd}}}^{\infty}  \frac{1}{\sqrt{k}} \left[ 1 + (-1)^{n + 1} \cos 2b \sqrt{2k + 1}\right] \frac{1}{n - k} \label{eq:sigrenew}
\end{equation}
It is important to point out that the coefficient $(-1)^k$ in \eqref{eq:aksquareform2} dependent on $k$ can be written in \eqref{eq:sigrenew} as $(-1)^{n + 1}$ without dependence on the index $k$ of summation because in this sum only $k \equiv n + 1 \pmod{2}$ are involved.  

Put
\begin{align}
\sigma(n) & = \xi(n) + (-1)^{n + 1} \eta(n) \label{eq:sigrenewdecomp}\\
\intertext{where}
\xi(n) & = \sum_{\substack{k = 1 \\ n - k \text{ odd}}}^{\infty}  \frac{1}{\sqrt{k}} \cdot \frac{1}{n - k} \label{eq:zetarenew}\\
\eta(n) & = \sum_{\substack{k = 1 \\ n - k \text{ odd}}}^{\infty}  \frac{1}{\sqrt{k}} \cdot \frac{\cos 2 b \sqrt{2k + 1}}{n - k}. \label{eq:etarenew}
\end{align} 

\subsection{}  We've already seen (Lemma~\ref{lem:sumest}) that $\disp \vertpair{\xi(n)} = O \parenpair{\frac{\log n}{\sqrt{n}}}$ but such an estimate is not good for $\rho$ in \eqref{eq:rhobd}.  We could hope to get rid of $\log n$ because of the sign-change in the term $\frac{1}{n - k}$.  However, $\xi$ decays much faster than $\disp O \parenpair{\frac{1}{\sqrt{n}}}$.  The following is true.

\begin{lem} \label{lem:zetabds} 
Let $\xi \in$ \eqref{eq:zetarenew}.  Then 
\begin{equation}
\vertpair{\xi(n)} \leq \frac{C}{n} \label{eq:zetabd}
\end{equation}
\end{lem}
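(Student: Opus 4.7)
My plan is to exploit the sign cancellation in $1/(n-k)$ as $k$ crosses $n$, since Lemma~\ref{lem:sumest} alone yields only $|\xi(n)| = O(\log n/\sqrt n)$. Splitting $\xi(n)$ at $k = n$ and substituting $m = |n - k|$ in each piece gives
\[
\xi(n) = \Sigma_1 - \Sigma_2, \quad \Sigma_1 = \sum_{\substack{m \text{ odd} \\ 1 \leq m < n}} \frac{1}{m}\left[\frac{1}{\sqrt{n-m}} - \frac{1}{\sqrt{n+m}}\right],\quad \Sigma_2 = \sum_{\substack{m \text{ odd} \\ m > n}} \frac{1}{m\sqrt{n+m}}
\]
(the variable $m = |n-k|$ is always odd, because $n-k$ is). The algebraic identity
\[
\frac{1}{\sqrt{n-m}} - \frac{1}{\sqrt{n+m}} = \frac{2m}{\sqrt{n^2-m^2}\,(\sqrt{n-m}+\sqrt{n+m})}
\]
eliminates the $1/m$ in $\Sigma_1$, presenting both $\Sigma_i$ as sums of smooth positive functions $f_i(m)$.

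Next I compare each $\Sigma_i$ with its natural Riemann-integral analog (the density of odd integers is $1/2$). Setting $I_1 := \int_0^n \tfrac{2\,dm}{\sqrt{n^2-m^2}(\sqrt{n-m}+\sqrt{n+m})}$ and using $m = n\sin\theta$ together with the identity $\sqrt{1-\sin\theta}+\sqrt{1+\sin\theta} = 2\cos(\theta/2)$ reduces it to $\frac{1}{\sqrt n}\int_0^{\pi/2}\sec(\theta/2)\,d\theta = \frac{2\ln(\sqrt 2+1)}{\sqrt n}$. Similarly, for $I_2 := \int_n^\infty \tfrac{dm}{m\sqrt{n+m}}$, the substitution $u = \sqrt{n+m}$ gives $I_2 = \frac{2}{\sqrt n}\int_{\sqrt 2}^\infty \frac{dv}{v^2-1} = \frac{2\ln(\sqrt 2+1)}{\sqrt n}$. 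Thus the leading $O(1/\sqrt n)$ contributions $\tfrac{1}{2}I_1$ and $\tfrac{1}{2}I_2$ of $\Sigma_1$ and $\Sigma_2$ are equal, and cancel exactly in $\xi(n)$.

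The remaining work is to bound the two Riemann-sum errors $|\Sigma_i - \tfrac{1}{2}I_i|$ by $O(1/n)$. For $\Sigma_2$ this is routine: $f_2(m) = 1/(m\sqrt{n+m})$ is smooth and monotone decreasing on $[n,\infty)$, so the standard midpoint-rule bound with spacing $2$ telescopes to give error $O(f_2(n)) = O(n^{-3/2})$. The main obstacle is $\Sigma_1$, since $f_1(m) = \frac{2}{\sqrt{n^2-m^2}(\sqrt{n-m}+\sqrt{n+m})}$ blows up as $m \to n^-$ and the naive telescoping estimate does not apply on the whole interval. I would handle this by splitting at $m = n-2$: on $[0, n-2]$ the function $f_1$ is monotone with $f_1(n-2) = O(1/n)$, so the telescoping bound $|2\,\Sigma_1|_{[0,n-2]} - \int_0^{n-2}\!f_1| \leq 2(f_1(n-2) - f_1(0))$ is $O(1/n)$; for the single surviving term $f_1(n-1)$ and the boundary integral $\int_{n-2}^n f_1(m)\,dm$, a direct computation using $f_1(m) \sim 1/(n\sqrt{n-m})$ as $m \to n^-$ shows each is $O(1/n)$. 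Assembling everything yields $|\xi(n)| \leq |\Sigma_1 - \tfrac{1}{2}I_1| + |\Sigma_2 - \tfrac{1}{2}I_2| \leq C/n$, as claimed.
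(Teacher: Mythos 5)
Your proof is correct and follows essentially the same strategy as the paper: rewrite $\xi(n)$ as a difference of a ``paired'' sum and a tail, observe that the arithmetic identity $\frac{1}{\sqrt{n-m}}-\frac{1}{\sqrt{n+m}}=\frac{2m}{\sqrt{n^2-m^2}(\sqrt{n-m}+\sqrt{n+m})}$ removes the apparent singularity, compare both pieces with their Riemann integrals by monotonicity, and exploit the exact cancellation of the two leading $2\log(1+\sqrt2)/\sqrt n$ contributions. The one cosmetic difference is that by parametrizing with $m=|n-k|$ (always odd) you handle the $n$ even and $n$ odd cases uniformly, whereas the paper's Lemma~\ref{lem:zetabds} first does $n=2p+1$ in full and then separately bounds $|\xi(2p)-\xi(2p+1)|$ by $O(1/p)$; both routes are valid.
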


\begin{proof}
The formulas are a little bit different for $n$ even and odd.  Let us write all the details for $n = 2p + 1$ odd fixed; then only even $k = 2m$, $m = 1, 2, \dotsc $, are involved so
\begin{align} \xi(n) &= \xi(2p + 1) = \frac{1}{2 \sqrt{2}} \sum_{m = 1}^{\infty} \frac{1}{\sqrt{m}} \cdot \frac{1}{(p - m) + \frac{1}{2}} \label{eq:zetaexpr0}\\
 &= 2^{-3/2}[S_1 - S_2] \label{eq:zetaexpr}
\end{align}
where
\begin{equation}
\begin{aligned}
S_1 & = \sum_{j = 0}^{p-1} \frac{1}{j + 1/2} \left( \frac{1}{\sqrt{p - j}} - \frac{1}{\sqrt{p + j + 1}} \right) , \\
S_2 & = \sum_{j = p}^{\infty} \frac{1}{j + 1/2} \cdot \frac{1}{\sqrt{p + j + 1}} \, .
\end{aligned} \label{eq:sumforms}
\end{equation}

Put 
\begin{equation}
P = p + \frac{1}{2},\quad t_j = j + \frac{1}{2}, \quad \text{ and} \label{eq:somesymbols}
\end{equation}
\begin{equation}
\varphi(x) = \frac{1}{x} \cdot \frac{1}{\sqrt{P + x}}, \quad x \geq P \label{eq:varphinewdef}
\end{equation}
Then, with the integral test, we compare the sum
\begin{gather}
S_2 = \sum_{j = p}^{\infty} \frac{1}{t_j} \cdot \frac{1}{\sqrt{P + t_j}} = \sum_{j = p}^{\infty} \varphi(t_j) \label{eq:s2def}\\
\intertext{and the integral}
I_2 = \int_{P}^{\infty} \varphi(x) \, dx. \label{eq:i2def}
\end{gather}
Monotonicity of $\varphi(x)$ down implies that 
\begin{equation}
S_2 \geq I_2 \geq S_2 - \varphi(t_p) = S_2 - \frac{1}{\sqrt{2} P^{3/2}} \label{eq:twomono}
\end{equation}
so
\begin{align}
S_2 & = I_2 + \rho_3(n), \label{eq:s2decomp} \\
0 &\leq \rho_3(n) \leq \frac{1}{\sqrt{2} P^{3/2}}; \label{eq:rho3bd}
\end{align}
$\rho_3$ is well under the restriction \eqref{eq:rhobd}.

\eqref{eq:i2def} is evaluated explicitly:
\begin{equation}
\begin{split}
I_2 & = \int_P^{\infty} \frac{dx}{x} \cdot \frac{1}{\sqrt{P + x}} = \frac{1}{P^{1/2}} \int_1^{\infty} \frac{dt}{t} \frac{1}{\sqrt{1 + t}} = \\
& = \frac{1}{P^{1/2}} \int_0^1 \, \frac{dv}{v} \frac{\sqrt{v}}{\sqrt{ 1+ v}} = B \cdot \frac{1}{P^{1/2}} \\
\text{where } B & = \int_0^1 \frac{dv}{\sqrt{\left(v + \frac{1}{2} \right)^2 - \left( \frac{1}{2} \right)^2}} = \int_1^3 \frac{du}{\sqrt{u^2 - 1}} = \\
& = \int_0^{\mysteryletterC} \frac{d(\cosh s)}{\sinh s}  = \mysteryletterC, \quad \cosh \mysteryletterC = 3, \quad \mysteryletterC > 0\\
\text{i.e. } &T + \frac{1}{T} = 6, \quad T = e^{\mysteryletterC} > 1,\\
&T^2 - 6T + 1 = 0 \quad \text{so}\\
T &= 3 + \sqrt{8}, \quad \text{or}\\
 \quad \mysteryletterC  &= \log (3 + \sqrt{8}) = 2 \log(1 + \sqrt{2})
\end{split} \label{eq:constBfinal}
\end{equation}

[We changed the variables of integration as
\begin{equation*}
\begin{split}
x = Pt, \quad t = \frac{1}{v}, \quad \frac{u}{2} = v + \frac{1}{2}, \quad u = \cosh s .]
\end{split} \label{eq:varchanges}
\end{equation*}
Finally,
\begin{equation}
I_2 = BP^{-1/2}, \quad B = 2 \log (1 + \sqrt{2}). \label{eq:i2final}
\end{equation}

\subsection{}

To analyze $S_1$ let us introduce functions
\begin{gather}
w = \sqrt{P^2 - x^2} \label{eq:wdef}\\
\begin{split}
\psi(x) = \frac{1}{x} \left[ \frac{1}{\sqrt{P - x}} - \frac{1}{\sqrt{P + x}} \right], \\
\frac{1}{2} \leq x \leq P - 1 = p - \frac{1}{2}.
\end{split} \label{eq:psidef}
\end{gather}
Then
\begin{equation}
\begin{split}
\psi(x) &= \frac{1}{x} \cdot \frac{\sqrt{P + x} - \sqrt{P - x}}{w} = \frac{2}{w \left[ \sqrt{P + x} + \sqrt{P - x} \right]} = \\
&= \frac{\sqrt{2}}{w [P + w]^{1/2}}, \quad \text{ monotone increasing,}
\end{split} \label{eq:psimani}
\end{equation}
and the sum 
\begin{equation}
S_1 = \sum_{j = 0}^{p - 1} \psi(t_j), \quad t_j = j + \frac{1}{2} \label{eq:s1def}
\end{equation}
could be compared with the integral
\begin{equation}
I_1 = \int_{1/2}^{p - 1/2} \psi(x) \, dx. \label{eq:i1def}
\end{equation}
Indeed 
\begin{gather}
S_1 \geq I_1 \geq S_1 - \psi(t_{p-1}) \label{eq:s1mono}\\
\psi(t_{p - 1}) = \frac{1}{p - \frac{1}{2}} \left[ 1 - \frac{1}{\sqrt{2p}} \right] \leq \frac{1}{p} \label{eq:psiest1}
\end{gather}
so [compare \labelcref{eq:s2decomp,eq:rho3bd}]
\begin{gather}
S_1 = I_1 + \rho_4(n), \label{eq:s1decomp}\\
0 \leq \rho_4(n) \leq \psi(t_{p-1}) \leq \frac{1}{P}, \label{eq:rho4bd}
\end{gather}
well under the restriction \eqref{eq:rhobd}.

Next, we evaluate $I_1 \in $ \labelcref{eq:i1def,eq:psimani} explicitly, $w = \sqrt{P^2 - x^2}$, $\disp p - \frac{1}{2} = P - 1$,
\begin{align}
\begin{split}
 I_1 &= \int_{1/2}^{p - 1/2} \frac{\sqrt{2} \, dx}{w [ P + w]^{1/2}} =\\
&= \int_{\frac{1}{2} P}^{1 - \frac{1}{P}} \frac{\sqrt{2} P \, dt}{P \sqrt{1 - t^2} \cdot P^{1/2} \left( 1 + \sqrt{1 - t^2} \right)^{1/2}} = \end{split} \label{eq:i1eqs}\\
\begin{split} &= P^{-1/2} \left[ \int_0^1 \frac{ \sqrt{2} \, dt}{\widetilde{w} (1 + \widetilde{w})^{1/2}} - \rho_5^{\prime} - \rho_6^{\prime} \right] \quad \text{where} \quad \widetilde{w}(t) = \sqrt{1-t^2} , \end{split} \label{eq:i1decomp}
\end{align}
\begin{align}
\begin{split}\rho_5^{\prime} &= \int_0^{\frac{1}{2} P} \frac{dt \cdot \sqrt{2}}{\widetilde{w}\left(1 + \widetilde{w}\right)^{1/2}} , \end{split} \label{eq:rho5pdef}\\
 \begin{split}\rho_6^{\prime}&= \int_{1 - \frac{1}{P}}^{1} \frac{dt}{\sqrt{ 1- t}} \cdot \frac{1}{\sqrt{1 + t}} \frac{\sqrt{2}}{\left( 1 + \widetilde{w} \right)^{1/2}} \leq \\
& \leq \sqrt{2} \int_0^{1/P} \frac{d\tau}{\sqrt{\tau}} = 3P^{-1/2}.
\end{split} \label{eq:rho6pdef}
\end{align}
If $P \geq 2$, then $\disp \widetilde{w} \geq \frac{\sqrt{3}}{2}$ and the integrand of $\rho_5^{\prime}$ does not exceed $2$ if $t \in \left[ 0, \frac{1}{4} \right]$ so 
\begin{equation}
\rho_5^{\prime} \leq \frac{1}{P} \label{eq:rho5pbd}
\end{equation}
These inequalities, together with \eqref{eq:i1decomp}, \eqref{eq:rho5pdef}-\eqref{eq:rho5pbd}, \eqref{eq:rho6pdef}, show that
\begin{equation}
I_1 = AP^{-1/2} - \rho_5 - \rho_6 \label{eq:i1truedecomp}
\end{equation}
where
\begin{align}
0 &\leq \rho_5 = P^{-1/2} \rho_5^{\prime}  \leq P^{-3/2}, \label{eq:rho5db}\\
0 &\leq \rho_6 = P^{-1/2} \rho_6^{\prime}  \leq 3P^{-1}, \label{eq:rho6db}
\end{align}
and
\begin{equation}
A = \int_0^1 \frac{\sqrt{2} \, dt}{\widetilde{w}(t) \sqrt{1 + \widetilde{w}(t)}}, \quad \widetilde{w} = \sqrt{1 - t^2}. \label{eq:a2def}
\end{equation}
Let us evaluate $A$; with $t = \sin \varphi$ we have:
\begin{align*}
A & = \int_0^{\pi/2} \frac{\cos \varphi \, d\varphi}{\cos \varphi} \cdot \frac{1}{\left(\frac{1}{2}(1 + \cos \varphi) \right)^{1/2}}= \\
& = \int_0^{\pi/2} \frac{d\varphi}{\cos\frac{\varphi}{2}} \cdot \frac{\cos \frac{\varphi}{2}}{\cos  \frac{\varphi}{2} } = \int_0^{\pi/2} \frac{2 \, d \left( \sin \frac{\varphi}{2} \right)}{1 - \left( \sin \frac{\varphi}{2}\right)^2} \\
& = 2 \int_0^{\sqrt{2}/2} \frac{d\tau}{1 - \tau^2} = \int_0^{1/\sqrt{2}} \left[ \frac{1}{1 - \tau} + \frac{1}{1 + \tau} \right] \, d\tau = \\
& = \left. \log \frac{1 + t}{1 - t} \right\vert_0^{1/\sqrt{2}} = \log (1 + \sqrt{2})^2 = 2 \log(1 + \sqrt{2}), \text{ i.e.,}
\end{align*}
\begin{equation} \label{eq:constArep}
A = 2 \log (1 + \sqrt{2}),
\end{equation}
so by \eqref{eq:i2final}, 
\begin{equation}
A = B, \quad \text{ and } I_1 - I_2 = 0. \label{eq:iscancel}
\end{equation}
By \eqref{eq:zetaexpr}, \eqref{eq:s2decomp}, \eqref{eq:rho3bd}, \eqref{eq:s1decomp}, \eqref{eq:i1decomp}, \eqref{eq:rho5db}, and \eqref{eq:rho6db},

\begin{equation}
\begin{split}
\xi(n) & = 2^{-3/2} [S_1 - S_2] = \\
& = 2^{-3/2} \left[(\rho_4 + AP^{-1/2} - \rho_5 - \rho_6) - (BP^{-1/2} + \rho_3) \right] \\
& = 2^{-3/2} \left[ \rho_4 - \rho_5 - \rho_6 - \rho_3 \right].  
\end{split} \label{eq:zetafinal}
\end{equation}

If $P \geq 4$, with \eqref{eq:rho4bd}, \eqref{eq:rho5db}, \eqref{eq:rho6db}, \eqref{eq:rho3bd} correspondingly, we have: $\disp 2^{-3/2} \leq \frac{2}{5}$ and

\begin{equation}
\begin{split}
\vert \xi(n) \vert &\leq \frac{2}{5} \left[ P^{-1} + P^{-3/2} + 3P^{-1} + P^{-3/2} \right]\\
& \leq \frac{2}{5} P^{-1} \left[ 1 + \frac{1}{2} + 3 + \frac{1}{2} \right] = 2P^{-1}.
\end{split} \label{eq:zetaactbd1}
\end{equation}
With $n = 2p + 1 = 2P$
\begin{equation}
\vert \xi(n) \vert \leq \frac{4}{n} \quad \text{for } n \geq 8. \label{eq:zetaactbd2}
\end{equation}
This proves \eqref{eq:zetabd}, at least for odd $n$.

If $n$ is even, say $n = 2p$, then [see \eqref{eq:zetarenew}], with $k = 2m - 1$,

\begin{align}
\xi(2p) & = \sum_{m = 1}^{\infty} \frac{1}{\sqrt{2m - 1}} \cdot \frac{1}{2(p - m) + 1} \label{eq:zeta2peval} \\
& = \frac{1}{2 \sqrt{2}} \sum_{m = 1}^{\infty} \frac{1}{\sqrt{m - \frac{1}{2}}} \cdot \frac{1}{(p - m) + \frac{1}{2}}. \label{eq:zeta2peval2}
\end{align}
For any $d$, $m > 3\vert d \vert$, for some $\theta$, $\vert \theta \vert < 1$, 
\begin{equation}
\begin{split}
\delta_m &\equiv \left\vert \frac{1}{\sqrt{m - d}} - \frac{1}{\sqrt{m}} \right\vert= \frac{\vert d \vert}{2 (m - d \theta)^{3/2}}\leq\\
& \leq \frac{\vert d \vert }{2 (m - \vert d \vert )^{3/2}} \leq \frac{3^{3/2}}{2^{5/2}} \frac{\vert d \vert }{m^{3/2}} < \frac{ \vert d \vert}{m^{3/2}} \, .
\end{split} \label{eq:deltameval}
\end{equation}
and 
\begin{equation}
\vert \delta_m \vert \leq \vert d \vert m^{-3/2} \quad \text{ if } m > 3 \vert d \vert \label{eq:deltambd}
\end{equation}
By Remark~\ref{rem:sbds}, $\disp \beta = \frac{3}{2}$, $\disp d = \frac{1}{2}$,
\begin{equation}
\begin{split}
\sum_m \vert \delta_m \vert \cdot \frac{1}{\vert p - m + \frac{1}{2}\vert} \leq \frac{C}{p} = \orderasymp{\frac{1}{n}}
\end{split} \label{eq:deltamsum}
\end{equation}
So 
\begin{equation}
\begin{split}
\vert \rho_7(n) \vert = \vert \xi(2p) - \xi(2p + 1) \vert \leq \frac{C}{p} 
\end{split} \label{eq:zetadiffbd}
\end{equation}
and by \eqref{eq:zetaactbd2} and \eqref{eq:zetadiffbd}
\begin{equation}
\vert \xi(2p) \vert \leq \frac{C}{n} \label{eq:zetaactbd3}
\end{equation}
This completes the proof of Lemma~\ref{lem:zetabds}.
\end{proof}

\section{Asymptotics of the term \texorpdfstring{$\eta(n)$}{eta(n)}, Part 1} \label{sec:etabdpt1}
Recall \eqref{eq:etarenew}:
\begin{equation}
\eta(n) = \sum_{\substack{k = 1 \\ n - k \text{ odd}}}^{\infty} \frac{1}{\sqrt{k}} \frac{\cos 2 b \sqrt{2k + 1}}{n - k}. \label{eq:etarepeat}
\end{equation}
\subsection{}  The second term in \eqref{eq:aksquareform2} or \eqref{eq:sigrenew}, \eqref{eq:sigrenewdecomp} leads us to the sequence $\eta(n)$ which --- after Lemma~\ref{lem:zetabds}--- could become a ``leading'' term in the asymptotics of $\evencasesig(n)$ or $\sigma(n) \in$ \eqref{eq:sigrenew} and eventually of $T_2(n)$ in \eqref{eq:t2resolve}, \eqref{eq:t2even}.

One more adjustment along the lines of Remark~\ref{rem:sbds} or inequalities \eqref{eq:deltamsum}, \eqref{eq:zetadiffbd}.  For any $\omega > 0$ and $d$ real, $k > 2 \vert d \vert$,

\begin{equation}
\begin{split}
\delta_k^{\prime} & \equiv \frac{\cos \omega \sqrt{k + d}}{\sqrt{k}} - \frac{\cos \omega \sqrt{k}}{\sqrt{k}} =\\
& = \frac{-\omega}{\sqrt{k}} \cdot \frac{\sin \omega \sqrt{k + \theta_k d}}{2 \sqrt{k + \theta_k d}}, \quad 0 \leq \theta_k \leq 1,
\end{split} \label{eq:deltakpeval}
\end{equation}
so 
\begin{equation}
\vert \delta_k^{\prime} \vert \leq \frac{\omega}{k}, \label{eq:deltakpbd}
\end{equation}
and if in the definition of $\eta(k)$ we'll write $\sqrt{2k}$ instead of $\sqrt{2k + 1}$, i.e., analyze
\begin{equation}
\eta^{\prime}(n) = \sum_{\substack{k = 1 \\ n - k \text{ odd}}}^{\infty} \frac{1}{\sqrt{k}} \cdot \frac{\cos 2 b \sqrt{2k}}{n - k} \label{eq:etapdef}
\end{equation}
the error
\begin{equation}
\rho_8 = \eta - \eta^{\prime}; \quad \vertpair{\rho_8} \leq  \sum_{\substack{k = 1 \\ n - k \text{ odd}}}^{\infty} \vert \delta_k^{\prime} \vert \cdot \frac{1}{\vertpair{n - k}} \label{eq:rho8def}
\end{equation}
by Remark~\ref{rem:sbds} and its variations has an estimate [with \eqref{eq:deltakpbd}]
\begin{equation}
\vertpair{\rho_8(n)} \leq 4 \omega \,\frac{\log n}{n},
\end{equation}
well under restrictions \eqref{eq:rhobd}, so we can analyze $\eta^{\prime} \in $ \eqref{eq:etapdef} instead of $\bracepair{\eta(n)}$ itself.  
\subsection{}\label{subsect:enoddintro} As in Section~\ref{sec:manyineqs}, let us do details when $n$ is odd, i.e., $n = 2p + 1$, so $k = 2m$ and 

\begin{equation}
\begin{split}
\eta^{\prime}(n) &= \sum_{m = 1}^{\infty} \frac{1}{\sqrt{2m}} \cdot \frac{\cos \left(2 b \cdot  2 \sqrt{m}\right)}{2(p-m) + 1} = \\
& = \frac{1}{2\sqrt{2}} \sum_{m = 1}^{\infty} \frac{\cos 4 b \sqrt{m}}{\sqrt{m}} \cdot \frac{1}{(p - m) + \frac{1}{2}}.
\end{split} \label{eq:etapeval}
\end{equation}
As in \labelcref{eq:zetaexpr,eq:sumforms}
\begin{equation}
\eta^{\prime}(n) = 2^{-3/2} [S_1 - S_2]  \label{eq:etapdecomp}
\end{equation}
where
\begin{align}
S_1 & = \sum_{j = 0}^{p-1} \frac{1}{j + \frac{1}{2}} \left[ \frac{\cos 4b \sqrt{p-j}}{\sqrt{p - j}} - \frac{\cos 4b \sqrt{p + j + 1}}{\sqrt{p + j + 1}}\right], \label{eq:es1def}\\
S_2 & = \sum_{j = p}^{\infty} \frac{1}{j + \frac{1}{2}} \frac{\cos 4b \sqrt{p + j + 1}}{\sqrt{p + j + 1}}, \label{eq:es2def}
\end{align}
\begin{prop} \label{prop:es2bds}
With notations \labelcref{eq:es1def,eq:es2def}
\begin{gather}
\vert S_2 \vert \leq C\frac{1}{p} \label{eq:es2bd}\\
\intertext{and}
S_1  = \pi \, \frac{\sin(4 b \sqrt{p})}{\sqrt{p}} + \rho_9(n) = 2\pi \frac{\sin 2b \sqrt{2n}}{\sqrt{2n}} + \orderasymp{\frac{\log n}{n}}, \label{eq:es1decomp}\\
\vert \rho_9(n) \vert \leq C \frac{\log p}{p} \label{eq:rho9bd}
\end{gather}
\end{prop}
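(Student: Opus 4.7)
The plan is to compare each sum with a continuous integral, then evaluate via the substitution $u=\sqrt{k}$, which converts the Hermite-type arguments $4b\sqrt{k}$ into a linear frequency $4bu$ against rational weights. For the tail $S_2=\sum_{j\ge p}\varphi(t_j)$ with $t_j=j+\tfrac{1}{2}$, $P=p+\tfrac{1}{2}$, and $\varphi(x)=\cos(4b\sqrt{P+x})/(x\sqrt{P+x})$, the envelope $|\varphi|$ is decreasing, so the integral test gives $|S_2-I_2|=O(P^{-3/2})$ with $I_2:=\int_P^\infty\varphi(x)\,dx$. Substituting $u=\sqrt{P+x}$ yields $I_2=2\int_{\sqrt{2P}}^\infty\cos(4bu)/(u^2-P)\,du$, and one integration by parts (antiderivative $\sin(4bu)/(4b)$) produces a boundary term of size $O(1/(bP))$ and a residual integrand of size $O(1/u^3)$ on $[\sqrt{2P},\infty)$, giving $|S_2|\le C/p$ and establishing \eqref{eq:es2bd}.

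For $S_1$, I would rewrite
\[
S_1=\sum_{j=0}^{p-1}\psi(t_j),\qquad \psi(t)=\frac{F(P-t)-F(P+t)}{t},\quad F(k)=\frac{\cos 4b\sqrt{k}}{\sqrt{k}},
\]
and view this as a midpoint Riemann sum on $[0,p]$ for the integral $I:=\int_0^p\psi(t)\,dt$, with discretization error at most $\int_0^p|\psi'(t)|\,dt$. Splitting into the central band $[1/2,P/2]$, where $|\psi'(t)|=O(1/(tP))+O(1/(t^2\sqrt P))$, and the near-endpoint band $[P/2,P-1/2]$, where $|\psi'(t)|=O(1/(P(P-t)))$ dominates, both pieces contribute $O(\log P/P)$.

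To evaluate $I$ itself, substitute $t=P-v^2$ in the first summand of $\psi$ and $t=u^2-P$ in the second; combining the two pieces with the appropriate Jacobians gives
\[
I = 2\,\mathrm{p.v.}\!\int_0^{\sqrt{2P}}\frac{\cos 4bu}{P-u^2}\,du.
\]
Using the partial fraction $(P-u^2)^{-1}=(2\sqrt{P})^{-1}\bigl[(\sqrt{P}-u)^{-1}+(\sqrt{P}+u)^{-1}\bigr]$, the non-singular piece is $O(1/(b\sqrt{P}))$ after one integration by parts against the oscillation. For the singular piece, the shift $w=u-\sqrt{P}$ together with the classical Dirichlet identities
\[
\int_{-\infty}^{\infty}\frac{\sin(4bw)}{w}\,dw=\pi,\qquad \mathrm{p.v.}\!\int_{-\infty}^{\infty}\frac{\cos(4bw)}{w}\,dw=0
\]
yields $\mathrm{p.v.}\!\int_0^{\sqrt{2P}}\cos(4bu)/(\sqrt{P}-u)\,du=\pi\sin(4b\sqrt{P})+O(1/(b\sqrt{P}))$, where the error collects the finite-interval corrections to the Dirichlet integrals. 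Dividing by $\sqrt{P}$ and expanding $\sqrt{P}=\sqrt{p}+O(1/\sqrt{p})$ gives $I=\pi\sin(4b\sqrt{p})/\sqrt{p}+O(1/p)$; adding the discretization error produces \eqref{eq:es1decomp} with $|\rho_9(n)|\le C\log p/p$.

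The main obstacle will be the discretization error in the near-endpoint band $t\approx P-1/2$, where $F(P-t)$ is evaluated at arguments of order $1$ and $|\psi'(t)|=O(1/(P(P-t)))$ dominates; the resulting integral $\int_{P/2}^{P-1/2}dt/(P-t)$ produces precisely the $\log P$ factor that makes $O(\log p/p)$ the best available bound on $\rho_9(n)$ rather than $O(1/p)$.
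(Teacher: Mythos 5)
Your route through the sum--integral comparison, the $\sqrt{P\pm x}$ substitutions, and Dirichlet integrals is in the same family as the paper's argument, but the execution of the $S_1$ evaluation is genuinely different and somewhat cleaner. The paper keeps the two halves $Q_1 = \int f_1$ and $Q_2 = \int f_2$ apart almost to the end (substitutions $P-x = P(1-\tau)^2$ and $P+x = P(1+t)^2$, endpoint truncations $\rho_{15},\dots,\rho_{21}$, denominator adjustments), forces both onto the common interval $[\tfrac{1}{4P},\tfrac14]$, applies $\cos u - \cos v = 2\sin\frac{u+v}{2}\sin\frac{v-u}{2}$, and lands on the single one-sided Dirichlet integral $\int_0^\infty \frac{\sin x}{x}\,dx = \frac{\pi}{2}$. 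You instead glue the two substituted halves directly into one principal-value integral $2\,\mathrm{p.v.}\int_0^{\sqrt{2P}}\frac{\cos 4bu}{P-u^2}\,du$, split $(P-u^2)^{-1}$ by partial fractions, and finish with both two-sided Dirichlet identities. That shortcuts the endpoint bookkeeping and makes the $\pi\sin(4b\sqrt P)$ coefficient appear in one stroke; the price is that you must track the asymmetry of the truncated interval $[-\sqrt P,(\sqrt2-1)\sqrt P]$ in the $\cos/w$ piece, which you dismiss a little quickly as ``finite-interval corrections,'' though it is indeed $O(1/(b\sqrt P))$ by one more integration by parts.

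One genuine error of justification in the $S_2$ part: you write that because the envelope $|\varphi|$ is decreasing, ``the integral test gives $|S_2-I_2|=O(P^{-3/2})$.'' The integral test requires $\varphi$ itself to be monotone; $\varphi(x)=\cos(4b\sqrt{P+x})/(x\sqrt{P+x})$ changes sign, so the telescoping bound by $|\varphi(t_p)|$ does not apply. The correct elementary bound is $|S_2-I_2|\le\int_P^\infty|\varphi'(x)|\,dx$, whose dominant contribution (from the $\sin$ term in $\varphi'$) is of order $1/P$, not $P^{-3/2}$ --- exactly the bound the paper's \eqref{eq:edeljsumbds} establishes. This does not affect your conclusion, because $I_2$ itself is $O(1/P)$, but the claimed $O(P^{-3/2})$ is unjustified (and is used tacitly in the same way in your $S_1$ Riemann-sum step, where again the honest bound is $\int|\psi'|$, exactly what you later compute).
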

\begin{proof}
Let us start to evaluate $S_2$; this is an easier part.  Like in \labelcref{eq:somesymbols,eq:varphinewdef,eq:s2def,eq:i2def,eq:twomono,eq:s2decomp,eq:rho3bd}
put 
\begin{gather}
\disp \varphi(x) = \frac{1}{x} \frac{\cos r \sqrt{P + x}}{\sqrt{P + x}} = \cos (r \sqrt{P + x})\cdot \psi(x), \quad r = 4b,  \label{eq:evarphidef} \\
\psi(x) = \frac{1}{x \sqrt{P + x}}, \quad P = p + \frac{1}{2} = \frac{1}{2} n.  \label{eq:epsidef}
\end{gather}
The function $\psi$ is monotone decreasing, so $\psi^{\prime}(x)$ is negative and
\begin{equation}
\vert \psi^{\prime}(x) \vert = - \psi^{\prime}(x).  \label{eq:epsisign}
\end{equation}

If 
\begin{equation}
\begin{aligned}
t_j &= j + \frac{1}{2}, \quad j \geq p, \quad \text{ and}\\
I(j) &= [t_j, t_j + 1], \quad \text{then}
\end{aligned} \label{eq:someconsts}
\end{equation} 
\begin{equation}
S_2 = \sum_{j = p}^{\infty} \varphi(t_j) \label{eq:es2sum}
\end{equation}
and with some $\theta_j$, $t_j \leq \theta_j \leq t_{j + 1}$
\begin{equation}
\begin{split}
\Delta_j & \equiv \varphi(t_j) - \int\limits_{I(j)} \varphi(x) \, dx = \varphi (t_j) - \varphi (\theta_j)\\
& = -\int_{t_j}^{\theta_j} \varphi^{\prime}(x) \, dx.
\end{split} \label{eq:edeljdef}
\end{equation}
Therefore,
\begin{equation}
\vertpair{\Delta_j} \leq \int\limits_{I(j)} \vertpair{\varphi^{\prime}(x)} \, dx , \label{eq:edeljbd}
\end{equation}
but by \eqref{eq:evarphidef}
\begin{align}
\varphi^{\prime}(x)& = - \frac{r \sin r \sqrt{P + x}}{2 \sqrt{ P + x}} \psi(x) + \cos r \sqrt{P + x} \cdot \psi^{\prime}(x) \label{eq:evarphider}\\
\intertext{so}
\vertpair{\varphi^{\prime}(x)} &\leq \frac{r}{2} \, \frac{1}{x(P + x)} - \psi^{\prime}(x). \label{eq:evarphiderbd}
\end{align}
Then by \eqref{eq:edeljdef}
\begin{equation}
\begin{split}
\sum_{j = p}^{\infty}\vertpair{\Delta_j} & \leq \sum_{j = p}^{\infty} \int\limits_{I(j)} \vertpair{\varphi^{\prime}(x)} \, dx = \\
& = \int_P^{\infty} \vertpair{\varphi^{\prime}(x)} \, dx \leq \frac{r}{2} \int_P^{\infty} \frac{dx}{x(P + x)} + \psi(P)\\
& = \frac{r}{2} \frac{\log 2}{P} + \frac{1}{P \sqrt{2P}} \leq \frac{2b + 1}{P}.
\end{split} \label{eq:edeljsumbds}
\end{equation}

If 
\begin{equation}
J_2 = \int_P^{\infty} \varphi(x) \, dx \label{eq:j2def}
\end{equation}
we conclude from \eqref{eq:es2sum}, \eqref{eq:edeljdef}, \eqref{eq:edeljsumbds} that
\begin{equation}
S_2 = J_2 + \rho_{10}, \quad \vert \rho_{10} \vert \leq \frac{2b + 1}{P} \label{eq:es2decomp}
\end{equation} 
and instead of the sum $S_2$ we'll analyze the integral $J_2 \in$ \eqref{eq:j2def} with the integrand \eqref{eq:evarphidef}.  A simple absolute value estimates would give the inequality 
\begin{align}
\vert J_2 \vert &\leq \int_P^{\infty} \frac{dx}{x(P + x)^{1/2}} = \mu P^{-1/2} \label{eq:j2bd}\\
\intertext{where}
\mu & = \int_1^{\infty} \frac{d\xi}{\xi(1 + \xi)^{1/2}} \label{eq:emudef}
\end{align}
but this is not good enough.

Recall that by \eqref{eq:j2def}, \eqref{eq:evarphidef}
\begin{equation}
J_2 = \int_{P}^{\infty} \frac{\cos r \sqrt{P + x}}{x \sqrt{P + x}} \, dx;  \label{eq:ej2renew}
\end{equation}
put 
\begin{equation}
\begin{aligned}
P + x &= P(1 + t)^2 \quad \text{so} \\
x &= Pt(2 + t), \quad dx = 2P(1 + t) \, dt 
\end{aligned} \label{eq:evarchange}
\end{equation}
and
\begin{equation}
\begin{split}
J_2 & = \int_{\sqrt{2} - 1}^{\infty} \frac{\cos r P^{1/2} (1 + t)}{Pt(2 + t)P^{1/2}} \cdot \frac{2P (1 + t) \, dt}{(1 + t)} = \\
& = 2P^{-1/2} \int_{\sqrt{2}}^{\infty} \frac{\cos \left(r P^{1/2} \tau \right)\, d\tau}{\tau^2 - 1}
\end{split} \label{eq:ej2eval}
\end{equation}
Now the integrand is absolutely integrable; moreover, we can integrate by parts with 
\begin{equation}
\cos rP^{1/2}\tau = \frac{1}{r P^{1/2}} d \parenpair{\sin r P^{1/2} \tau}, \quad \text{etc.}, \label{eq:ibp1}
\end{equation}
to get
\begin{align}
\rho_{11} = J_2 &= \frac{2 \sin r \sqrt{2p + 1}}{rP} + O \parenpair{\frac{1}{P^{3/2}}} \label{eq:rho11def}\\
\intertext{and}
\vertpair{\rho_{11}} & \leq \frac{3}{r} \, \frac{1}{P} \, , \label{eq:rho11bd}
\end{align}
certainly, under restrictions of \eqref{eq:rhobd}.  

Together with \eqref{eq:es2decomp}
\begin{align}
S_2 & = \rho_{11} + \rho_{10}, \quad \text{ and} \label{eq:es2decompfinal} \\
\vertpair{S_2} & \leq \left[\frac{1}{b} + 2b + 1 \right] \frac{1}{P} \label{eq:es2absbd}
\end{align}
Part \eqref{eq:es2bd} is proven.

\subsection{}  Next, we analyze $S_1 \in $ \eqref{eq:es1def}.  To avoid zero of $(P - x)$ at $x = P$, we consider 
\begin{equation}
S_1^{\prime} = \sum_{j = 0}^{p-2} \frac{1}{j + \frac{1}{2}} \left[ \frac{\cos 4b \sqrt{p-j}}{\sqrt{p - j}} - \frac{\cos 4b \sqrt{p + j + 1}}{\sqrt{p + j + 1}}\right], \label{eq:es1pdef}
\end{equation}
\begin{equation}
\begin{aligned}
S_1 &= S_1^{\prime} + \rho_{12},\\
\rho_{12} & = \bracepair{\text{ the }(p-1) \text{-th term in }S_1}
\end{aligned} \label{eq:es1decomp2}
\end{equation}
and
\begin{equation}
\vertpair{\rho_{12}} \leq \frac{1}{p - \frac{1}{2}} \cdot \left[ \frac{\cos r}{1} + \frac{1}{\sqrt{2p}} \right] \leq \frac{2}{P}, \quad \text{ if } p \geq 5. \label{eq:rho12bd}
\end{equation}

We follow pp. \pageref{eq:wdef}--\pageref{eq:i1decomp}, Section~\ref{sec:manyineqs}, but now the integrand 
\begin{equation}
\begin{split}
g(x) & = \frac{1}{x} \left[ \frac{\cos r \sqrt{ P - x}}{\sqrt{P - x}} - \frac{\cos r \sqrt{P + x}}{\sqrt{ P + x}}\right] \, ,\\
P & = p + \frac{1}{2}, \quad \frac{1}{2} \leq x \leq P - 1 = p - \frac{1}{2},
\end{split} \label{eq:cpxintegrand}
\end{equation}
in the integral
\begin{equation}
J_1 = \int_{1/2}^{p - \frac{1}{2}} g(x) \, dx \quad \left[\omega = \frac{1}{2}, q = p -1 \text{ if to use Lemma~\ref{lem:suminterr}}\right] \label{eq:ej1def}
\end{equation}
is not a monotone function.  Now, as on pages \pageref{eq:evarphidef} -- \pageref{eq:edeljsumbds}, this section, we will use the following.

\begin{lem} \label{lem:suminterr} 
Let $f$ be a $C^1$ function on the interval $[\omega, \omega + q]$, and
\begin{align}
S &= \sum_{j = 0}^{q-1} f(\omega + j), \label{eq:gensdef}\\
J & = \int_\omega^{\omega + q} f(x) \, dx. \label{eq:genjdef}
\end{align}
Then 
\begin{equation}
\vert S - J \vert \leq \int_\omega^{\omega + q} \vert f^{\prime}(x) \vert \, dx. \label{eq:gensumintdiffbd}
\end{equation}
\end{lem}
This is a well-known claim in elementary Numerical Analysis.
\begin{proof}
As on page~\pageref{eq:someconsts}, \eqref{eq:someconsts} define
\begin{equation}
I(j) = [\omega + j, \omega + j + 1], \quad 0 \leq j \leq q \label{eq:ijdefrenew}
\end{equation}
Then with some $\theta_j$, $0 \leq \theta_j \leq 1$
\begin{equation}
\begin{split}
\Delta_j & \equiv f(\omega + j) - \int\limits_{I(j)} f (x) \, dx = \\
&= f(\omega + j) - f(\omega + j + \theta_j) = -\int_{\omega + j}^{\omega + j + \theta_j} f^{\prime}(x) \, dx
\end{split} \label{eq:gendeltaj}
\end{equation}
so
\begin{equation}
\vertpair{ \Delta_j } \leq \int_{\omega + j}^{\omega + j + 1} \vertpair{f^{\prime}(x)} \, dx, \label{eq:gendeltajbd}
\end{equation}
and
\begin{equation}
\sum_{j = 0}^{q - 1} \vertpair{\Delta_j} \leq \int_\omega^{\omega + q} \vertpair{f^{\prime}(x)} \, dx. \label{eq:gendeltajsum}
\end{equation}
But by the notation \eqref{eq:gendeltaj}
\begin{equation}
J - S = \sum_{j = 0}^{q-1} \Delta_j \label{eq:gensumintcompare}
\end{equation}
so \eqref{eq:gendeltajsum} implies \eqref{eq:gensumintdiffbd}.
\end{proof} 
This lemma justifies our estimates which follow although we will not necessarily give formal references to Lemma~\ref{lem:suminterr} and its formulas.

\subsection{}  Let us split $g$ of \eqref{eq:cpxintegrand} as
\begin{equation}
g = g_1 + g_2, \quad \text{where} \label{eq:egsplit}
\end{equation}
\begin{equation}
\begin{split}
g_1(x) & = \frac{1}{x} \left[ \frac{1}{\sqrt{P - x}} - \frac{1}{\sqrt{P + x}}\right] \cos r \sqrt{P - x} \\
& = \psi_1(x) \cdot \cos r \sqrt{P - x}
\end{split} \label{eq:eg1def}
\end{equation}
and 
\begin{equation}
\begin{split}
g_2(x) &= \frac{1}{x\sqrt{P + x}} \left[ \cos r \sqrt{P - x} - \cos r \sqrt{P + x} \right] \\
& = \psi_2(x) \cdot \left[ \cos r \sqrt{P - x} - \cos r \sqrt{P + x} \right] 
\end{split} \label{eq:eg2def}
\end{equation}
Notice that as on pages \pageref{eq:wdef}--\pageref{eq:psimani}, \labelcref{eq:wdef,eq:psidef,eq:psimani}
\begin{equation}
\psi_1(x) = \frac{\sqrt{2}}{w(P + w)^{1/2}}, \quad w(x) = \sqrt{P^2 - x^2} \label{eq:psi1def}
\end{equation} 
is a monotone increasing $C^1$-function, and as on p. \pageref{eq:epsidef}, Section~\ref{sec:etabdpt1}, \eqref{eq:epsidef}, 
\begin{equation}
\psi_2(x) = \frac{1}{x \sqrt{P + x}} \quad \text{is monotone decreasing,} \label{eq:psi2def}
\end{equation}
so 
\begin{equation}
\psi_1^{\prime} > 0 \quad \text{and} \quad \psi_2^{\prime} < 0. \label{eq:psiders}
\end{equation}

These properties will help us to deal with integration of $\vertpair{\psi_1^{\prime}(x)}$ or $\vertpair{\psi_2^{\prime}(x)}$ without explicit evaluation of derivatives, just Newton-Leibnitz formula.  

Notice that
\begin{equation}
w^2(P - 1) = \left( p + \frac{1}{2} \right)^2 - \left( p - \frac{1}{2} \right)^2 = 2p \label{eq:wpteval1}
\end{equation}
so
\begin{equation}
0 \leq \psi_1 (P - 1) = \frac{1}{\sqrt{p} \left( P + \sqrt{2p} \right)^{1/2}} \leq \frac{1}{p} \label{eq:psipteval1}
\end{equation}
and 
\begin{equation}
w^2 \left( \frac{1}{2} \right) = P^2 - \frac{1}{4} = \left( p + \frac{1}{2} \right)^2 - \frac{1}{4} = p(p + 1). \label{eq:wpteval2}
\end{equation}
so
\begin{equation}
0 \leq \psi_1 \left( \frac{1}{2} \right) = \frac{\sqrt{2}}{\sqrt{p(p + 1)} \left( P + \sqrt{p(p + 1)}\right)^{1/2}} \leq \frac{1}{p^{3/2}} \label{eq:psipteval2} 
\end{equation}
By \eqref{eq:eg1def}, \eqref{eq:psi1def}
\begin{equation}
g_1^{\prime}(x) = \frac{r \sin r \sqrt{P - x}}{2 \sqrt{P - x}} \psi_1(x) + \psi_1^{\prime}(x) \cos r \sqrt{P - x} \label{eq:eg1der}
\end{equation}
and by \eqref{eq:psiders} and \eqref{eq:psipteval1}
\begin{align}
\begin{aligned}
\int_{1/2}^{P - 1} \vertpair{g_1^{\prime}(x)} \, dx & \leq \frac{r}{2} \int_{1/2}^{P - 1} \frac{\psi_1(x)}{\sqrt{P - x}} + \psi_1(P - 1) \\
& \leq \frac{r}{2} \int_{1/2}^{P - 1} \frac{dx}{(P - x) \sqrt{P + x} \sqrt{P + w}} + \frac{1}{p} \leq 
\end{aligned} \label{eq:g1pinta} \\
\begin{aligned}
2b P^{-1} \int_1^{p} \frac{d\xi}{\xi} + \frac{1}{p} \leq \frac{1}{P} [2b \log p + 1], \quad P \geq 3
\end{aligned} \label{eq:g1pintb}
\end{align}
This error term is $O \parenpair{\frac{\log n}{n}}$ so it is acceptable for \eqref{eq:rhobd}.

For $g_2 \in $ \eqref{eq:eg2def} we have:
\begin{align}
\begin{aligned}
g_2^{\prime}(x) &= &- \psi_2(x) \cdot \frac{r}{2} \left[ \frac{\sin r \sqrt{P - x}}{\sqrt{P - x}}\right] - \psi_2(x) \left[\frac{\sin r \sqrt{P + x}}{\sqrt{P + x}} \right]\\
&&+ \psi_2^{\prime}(x) \left[ \cos r \sqrt{P - x} - \cos r \sqrt{P + x} \right]
\end{aligned} \label{eq:eg2der}\\
\begin{aligned}
& \equiv v_1(x) + v_2(x) + v_3(x)
\end{aligned} \label{eq:eg2derdecomp}
\end{align}  

Then by \eqref{eq:psi2def}, $r = 4b$,
\begin{align}
\begin{split}
\int_{1/2}^{P - 1} \vert v_1(x) \vert \, dx & \leq 2b \int_{1/2}^{P - 1} \frac{dx}{x \sqrt{P + x} \sqrt{P - x}} \leq \\
& \leq 2b P^{-1/2} \int_{\frac{1}{2P}}^{1} \frac{P \, dt}{PtP^{1/2} \sqrt{ 1  - t}} \leq \\
& \leq 2b P^{-1} \left( \frac{3}{2} \int_{\frac{1}{2P}}^{1/2} \frac{dt}{t} + 2 \int_{1/2}^1 \frac{dt}{\sqrt{1 - t}}\right) \leq 
\end{split} \label{eq:v1inta}\\
\begin{split}
& \leq \frac{3b}{P} \left[ \log P + 1 \right]
\end{split} \label{eq:v1intb} 
\end{align}  
Next,
\begin{align}
\begin{aligned}
\int_{1/2}^{P - 1} \vert v_2(x) \vert \, dx & \leq \frac{r}{2} \int_{1/2}^{P - 1} \frac{dx}{x(P + x)} = \\
& = 2b P^{-1} \int_{1/2}^{P - 1} \left[ \frac{1}{x}- \frac{1}{P + x} \right] \, dx \leq 
\end{aligned} \label{eq:v2inta}\\
\begin{aligned}
& \leq \frac{2b}{P} \log 2P
\end{aligned} \label{eq:v2intb}
\end{align}  
Finally,

\begin{align}
&\vertpair{\cos r \sqrt{P - x} - \cos r \sqrt{P + x}} \leq \label{eq:sometermsbda}\\
\leq &\, \,  r (\sqrt{P + x} - \sqrt{P - x}) = \frac{8bx}{\sqrt{P + x} + \sqrt{P - x}}  \leq \frac{8bx}{\sqrt{P + x}}. \label{eq:sometermsbdb}
\end{align}  
and
\begin{equation}
\begin{split}
- \psi_2^{\prime}(x) &= \frac{1}{x^2} \, \frac{1}{\sqrt{P + x}} + \frac{1}{2x(P + x)^{3/2}}, \\
\intertext{so}
\int_{1/2}^{P - 1} \vertpair{v_3(x)} & \leq 4 \sqrt{2} b \int_{1/2}^{P - 1} \left[ \frac{1}{x \sqrt{P + x}} \cdot \frac{1}{(P + w)^{1/2}} + \frac{1}{(P + x)^{3/2}} \cdot \frac{1}{(P + w)^{1/2}} \right] \, dx
\end{split} \label{eq:v3inta}
\end{equation}
and
\begin{align}
\int_{1/2}^{P - 1} \vert v_3(x) \vert \, dx & \leq 8b \int_{1/2}^{P - 1} \left[ \frac{1}{x(P + x)} + \frac{1}{2(P + x)^2}\right] \, dx \label{eq:v3intb} \\
& \leq \frac{8b}{P} \left( \log 2P + \frac{1}{2} \right) \label{eq:v3intc}
\end{align}

With our notations \labelcref{eq:eg2der,eq:eg2derdecomp}, if we collect inequalities \labelcref{eq:v1intb,eq:v2intb,eq:v3intb} we have:
\begin{equation}
\int_{1/2}^{P - 1} \vertpair{g_2^{\prime}(x)} \, dx \leq \frac{13b}{P} \log(4P) . \label{eq:eg2derbd}
\end{equation}
Together with \eqref{eq:g1pintb} and \eqref{eq:egsplit} this implies
\begin{equation}
\int_{1/2}^{P - 1} \vertpair{g^{\prime}(x)} \, dx \leq \frac{15b}{P} \log (4P) \label{eq:egintbd}
\end{equation}
By Lemma~\ref{lem:suminterr},
\begin{align}
S_1^{\prime} & = J_1 + \rho_{14}, \label{eq:es1pdecomp} \\
\vertpair{\rho_{14}} & \leq \frac{15b}{P} \log(4P), \label{eq:rho14bd}
\end{align}
well under restrictions \eqref{eq:rhobd}.

All of the above make $J_1$ a front-runner in the race to be ``a leading term'' of asymptotics of $\eta(n)$ and $\sigma(n)$.  

\subsection{} \label{subsect:f1bds} To analyze $J_1 \in$ \eqref{eq:ej1def} we split the integrand $g \in$ \eqref{eq:cpxintegrand} on the interval $\disp \left[ \frac{1}{2}, p - \frac{1}{2} \right]$ just as it is written there, i.e., 
\begin{equation}
g(x) = f_1(x) - f_2(x) \label{eq:egnewdecomp}
\end{equation}
where
\begin{align}
f_1(x) & = \frac{1}{x} \frac{\cos r \sqrt{P - x}}{\sqrt{P - x}}\, , \label{eq:ef1def} \\
f_2(x) & = \frac{1}{x} \frac{\cos r \sqrt{P + x}}{\sqrt{P + x}} \, . \label{eq:ef2def} 
\end{align}

There are no zeros in the denominators; all functions $g$, $f_1$, $f_2$ are $C^{\infty}$ on $\left[ \frac{1}{2}, p - \frac{1}{2} \right]$ and for a while we can manipulate $f_1$, $f_2$ and their integrals separately.  When the arguments of $\mathit{cos}$ would be linear functions it is easier to compare two components and balance two integrals
\begin{equation}
Q_i = \int_{1/2}^{p - \frac{1}{2}} f_i(x) \, dx, \quad i = 1, 2. \label{eq:qidef}
\end{equation} 
To evaluate $Q_1$ put
\begin{align}
P - x &= P(1-\tau)^2 \quad \text{ so} \label{eq:q1changevara} \\
x & = P\tau(2 - \tau), \quad dx = 2P(1 - \tau) \, d \tau. \label{eq:q1changevarb}
\end{align}
The lower bound $\disp x = \frac{1}{2}$ corresponds to 
\begin{gather}
\tau_* = 1 - \left( 1 - \frac{1}{2P}\right)^{1/2} = \frac{1}{4P} + \epsilon, \label{eq:taulowdef}\\
0 \leq \epsilon \leq \frac{1}{(2P)^2} \label{eq:rhotaubd}
\end{gather}
and the upper bound $x = P - 1$ leads to 
\begin{equation}
\tau^* = 1 - \frac{1}{\sqrt{P}}. \label{eq:tauhighdef}
\end{equation}
Therefore,
\begin{align}
Q_1 & = \int_{\tau_*}^{\tau^*} \frac{\cos \parenpair{r P^{1/2} (1 - \tau)} 2P(1 - \tau) \, d\tau}{P\tau(2 - \tau)P^{1/2}(1 - \tau)} \label{eq:q1evala} \\
& = 2P^{-1/2} \int_{\tau_*}^{\tau^*} \frac{\cos \parenpair{r P^{1/2} (1 - \tau)}}{\tau} \, \frac{d\tau}{2 - \tau} \label{eq:q1evalb} 
\end{align} 

For unification, let us change $\tau_*$ to $\frac{1}{4P}$ and $\tau^*$ to $1$, so 
\begin{equation}
\begin{split}
\int_{\tau_*}^{\tau^*} = \int_{\frac{1}{4P}}^{1} - \int_{\tau^*}^{1} - \int_{\frac{1}{4P}}^{\frac{1}{4P} + \epsilon} \equiv \int_{\frac{1}{4P}}^{1} + \rho_{15}^{\prime} + \rho_{16}^{\prime}, \quad \tau_* \geq \frac{1}{4P}
\end{split} \label{eq:q1bdschange}
\end{equation}
On $\disp \left[ \frac{1}{4P}, \frac{1}{4P} + \epsilon \right]$ the absolute value of integrand in \eqref{eq:q1evalb} does not exceed $4P$ but --- see \eqref{eq:rhotaubd} --- the length of this interval $\leq \frac{1}{4P^2}$; therefore,
\begin{align}
\vertpair{\rho_{15}^{\prime}} &\leq \frac{1}{P} \quad \text{and} \label{eq:rho15pbd}\\
\rho_{15} &= 2P^{-1/2} \rho_{15}^{\prime}, \quad \vertpair{\rho_{15}} \leq 2P^{-3/2}. \label{eq:rho15bd}
\end{align}
On $[\tau^*, 1]$ the function - integrand has absolute values $\leq 2$ if $P \geq 4$ (see \eqref{eq:tauhighdef}) so 
\begin{align}
\vertpair{\rho_{16}^{\prime}} & \leq 2(1 - \tau^*) = \frac{2}{\sqrt{P}} \label{eq:rho16pbd} \\
\intertext{and}
\rho_{16} &= 2P^{-1/2} \rho_{16}^{\prime}, \quad \vertpair{\rho_{16}}. \leq 4P^{-1}. \label{eq:rho16bd}
\end{align}
Therefore, by \eqref{eq:q1evalb}, \eqref{eq:q1bdschange}
\begin{equation}
Q_1 = P^{-1/2} \int_{\frac{1}{4P}}^{1} \frac{\cos \parenpair{r P^{1/2} (1 - \tau)} }{\tau} \frac{2d\tau}{2 - \tau} + \rho_{15} + \rho_{16}. \label{eq:q1decompfinal}
\end{equation}
One more adjustment: change factor $\disp \frac{2}{2 - \tau}$ to $1$:
\begin{equation}
\frac{2}{2 - \tau} = 1+ \frac{\tau}{(2-\tau)}, \label{eq:denomchange}
\end{equation}
so the integral in \eqref{eq:q1decompfinal} is equal to
\begin{equation}
 \int_{\frac{1}{4P}}^{1} \frac{\cos \parenpair{r P^{1/2} (1 - \tau)} }{\tau} \, d\tau + \rho_{17}^{\prime} \label{eq:q1denomrevise}
\end{equation}
where
\begin{equation}
\begin{split}
\rho_{17}^{\prime} &= \int_{\frac{1}{4P}}^{1} \frac{\cos \parenpair{r P^{1/2} (1 - \tau)} }{2 - \tau} \, d\tau = \\
& = \left[ \int_0^1 - \int_0^{\frac{1}{4P}} \right]
\end{split} \label{eq:rho17pdef}
\end{equation}
The absolute value of the integrand in $\rho_{17}^{\prime}$ is bounded by $1$ so the second integral 
\begin{equation}
\vertpair{\int_0^{\frac{1}{4P}}\frac{\cos \parenpair{r P^{1/2} (1 - \tau)} }{2 - \tau} \, d\tau } \leq \frac{1}{4P} \label{eq:rho17pint2bd}
\end{equation}
and 
\begin{align}
\rho_{17}^{\prime} &= \int_0^1 \frac{\cos \left( r P^{1/2} y \right)}{1 + y} \, dy + \rho_{18}^{\prime}, \label{eq:rho17pdecomp} \\
\vertpair{\rho_{18}^{\prime}} & \leq \frac{1}{4} P^{-1} \label{eq:rho18pbd}
\end{align}
Riemann lemma tells us that for any $C^1$-function $\mysteryletterD$ on the interval $[A, B]$
\begin{equation}
\vertpair{\int_A^B e^{iTx} \mysteryletterD(x) \, dx} \leq \frac{1}{T} \left[ \vertpair{\mysteryletterD(A)} + \vertpair{\mysteryletterD(B)} + \int_A^B \vertpair{\mysteryletterD^{\prime}(x)} \, dx\right] \label{eq:riemleb}
\end{equation}
Therefore, the integral in $\rho_{17}^{\prime} \in $ \eqref{eq:rho17pdecomp} does not exceed
\begin{equation}
\frac{1}{rP^{1/2}} \left[ 1 + \frac{1}{2} + \frac{1}{2} \right] = \frac{1}{2b} \cdot P^{-1/2}, \label{eq:rho17pint1bd}
\end{equation}
and the adjustment \eqref{eq:q1denomrevise}, i.e., 
\begin{align}
\rho_{17} &= P^{-1/2} \rho_{17}^{\prime}, \quad \text{satisfies} \label{eq:rho17adjust} \\
\vertpair{\rho_{17}} &\leq \frac{1}{2b} \cdot P^{-1} + \vertpair{\rho_{18}^{\prime}} \label{eq:rho17bd} 
\end{align}

If we collect \eqref{eq:rho15bd}, \eqref{eq:rho16bd}, \eqref{eq:rho17bd}, and \eqref{eq:rho18pbd}, we get 
\begin{equation}
\begin{split}
\delta \equiv \sum_{i = 15}^{18} \vertpair{\rho_i} &\leq P^{-1} \left[ 2P^{-1/2} + 4 + \frac{1}{2b} + \frac{1}{4} P^{-1/2} \right] \\
& \leq \parenpair{7 + \frac{1}{2b}} P^{-1} 
\end{split} \label{eq:q1deltabd}
\end{equation}
and with this error, instead of $Q_1 \in$ \eqref{eq:q1decompfinal} we can consider 
\begin{equation}
Q_1^{\prime} = P^{-1/2} \int_{\frac{1}{4P}}^1 \frac{\cos r P^{1/2} (1 - \tau)}{\tau} \, d\tau \, . \label{eq:q1pdef} 
\end{equation}
Indeed, by \eqref{eq:q1deltabd}
\begin{equation}
\vertpair{Q_1 - Q_1^{\prime}} \leq \parenpair{7 + \frac{1}{2b}} P^{-1}. \label{eq:q1q1pdiff}
\end{equation}
\subsection{} Now we evaluate
\[
Q_2 = \int_{1/2}^{P - 1} f_2(x) \, dx, \quad f_2 \in \text{ \eqref{eq:ef2def}}.
\]
Many details are the same as in Section~\ref{subsect:f1bds} or (pp. \pageref{eq:j2def}--\pageref{eq:es2absbd}) Section~\ref{subsect:enoddintro}.  We'll make explanations short but all the formulas are written explicitly.  As in \eqref{eq:ej2renew}--\eqref{eq:evarchange} put
\begin{equation}
\begin{aligned}
P + x &= P(1 + t)^2 \quad \text{so} \\
x &= Pt(2 + t), \quad dx = 2P(1 + t) \, dt 
\end{aligned} \label{eq:evarchangerenew}
\end{equation}
and
\begin{equation}
Q_2 = \int_{t_*}^{t^*} \frac{\cos \left( r P^{1/2} (1 + t) \right) \, 2P(1 +t) \, dt}{Pt(2 + t) P^{1/2}(1 + t)} \label{eq:q2def}
\end{equation}
where
\begin{equation}
\begin{split}
t_* =\left( 1 + \frac{1}{2P}\right)^{1/2} - 1 = \frac{1}{4P} - \epsilon^{\prime}, \\
0 \leq \epsilon^{\prime} \leq \frac{1}{4P^2}; 
\end{split} \label{eq:tlowdef}
\end{equation}
\begin{equation}
\begin{split}
t^* =\left( 2 - \frac{1}{P}\right)^{1/2} - 1 = \sqrt{2} - 1 - \epsilon^{\prime\prime},\\
0 \leq \epsilon^{\prime \prime} \leq \frac{1}{2P}. 
\end{split} \label{eq:thighdef}
\end{equation}

With
\begin{equation}
Q_2 = P^{-1/2} \int_{t_*}^{t^*} \frac{\cos \left( r P^{1/2} (1 + t) \right) }{t(2 + t)} \, 2\, dt \label{eq:q2eval}
\end{equation}
If we change $t_*$ to $\frac{1}{4P}$ we are losing the interval $\disp \left[ t_*, \frac{1}{4P} \right]$ but its length is $\disp \leq \frac{1}{4P^2}$ [see \eqref{eq:tlowdef}] and the integrand's absolute value does not exceed $4P$ so the total ``error'' $\rho_{19}$ is 
\begin{equation}
\vertpair{\rho_{19}} \leq 4P \cdot \frac{1}{4P^2} \cdot 2P^{-1/2} = 2P^{-3/2} \label{eq:rho19bd} 
\end{equation}
The change of $t^*$ to $\sqrt{2} - 1$ gives the ``error'' $\rho_{20}$, and 
\begin{equation}
\vertpair{\rho_{20}} \leq \frac{1}{2P} P^{-1/2}= \frac{1}{2}P^{-3/2} \label{eq:rho20bd} 
\end{equation}

Now we change the factor $\disp \frac{2}{2 + t}$ to $1$ as in \eqref{eq:denomchange}, \eqref{eq:q1denomrevise}; 
\begin{equation}
\frac{2}{2 + t} = 1 - \frac{t}{2 + t}, \label{eq:denomchange2}
\end{equation}
and the difference-integral
\begin{equation}
\rho_{21} = P^{-1/2} \int_{\frac{1}{4P}}^{\sqrt{2} - 1} \frac{\cos \parenpair{r P^{1/2} (1 - t)} }{(2 + t)} \, dt  \label{eq:rho21def}
\end{equation}
so by \eqref{eq:riemleb} we have
\begin{align}
\vertpair{\rho_{21}} \leq \frac{2}{r P^{1/2}} P^{-1/2} = \frac{1}{2b} \cdot \frac{1}{P} \label{eq:rho21bd}
\end{align}
As in \eqref{eq:q1deltabd}
\begin{equation}
\begin{split}
\delta^{\prime} \equiv \sum_{i = 19}^{21} \vertpair{\rho_i} &\leq \frac{1}{P} \left[ 2P^{-1/2} + \frac{1}{2} P^{-1/2}  + \frac{1}{2b} \right] \\
& \leq \frac{1}{P}\left[ 2 + \frac{1}{2b} \right], \quad P \geq 4  
\end{split} \label{eq:q2deltabd}
\end{equation}
and
\begin{equation}
\vertpair{Q_2 - Q_2^{\prime}} \leq \left[ 2 + \frac{1}{2b} \right] \cdot \frac{1}{P} \label{eq:q2q2pdiff}
\end{equation}
where
\begin{equation}
Q_2^{\prime} =  P^{-1/2} \int_{\frac{1}{4P}}^{\sqrt{2} - 1}\frac{\cos r P^{1/2} (1 - t) }{1 + t} \, dt \label{eq:q2pdef}
\end{equation}
\subsection{} The two previous subsections [see \eqref{eq:ej1def}, \eqref{eq:q1pdef}, \eqref{eq:q2pdef}] explained that
\begin{equation}
J_1 = Q_1^{\prime} - Q_2^{\prime} + O \parenpair{\frac{\log P}{P}} \quad , \label{eq:ej1finaldecomp}
\end{equation}
 even with more accurate estimates of the error term $\disp O \parenpair{\frac{\log P}{P}}$.  Before the concluding claims about the $J_1$'s (i.e., $\sigma(n)$) asymptotics let us bring an integration for $Q_1^{\prime}$ and $Q_2^{\prime}$ to the same interval, say, $\disp \left[ \frac{1}{4P}, \frac{1}{4} \right]$ with understanding that $\frac{1}{4} < 1$ and $\frac{1}{4} < \sqrt{2} - 1$, so a piece of $Q_1^{\prime}$ 
 
\begin{align}
\rho_{22}^{\prime} &= \int_{1/4}^1 \frac{\cos r P^{1/2}(1 - \tau)}{\tau} \, d\tau \label{eq:rhoq1bc}\\
\intertext{and a piece}
\rho_{23}^{\prime} & = \int_{1/4}^{\sqrt{2} - 1} \frac{\cos r P^{1/2}(1 + \tau)}{\tau} \, d\tau \label{eq:rhoq2bc}
\end{align}
could be estimated by Riemann Lemma \eqref{eq:riemleb}.  We have
\begin{equation}
\begin{split}
\rho_{22} = P^{-1/2} \rho_{22}^{\prime}, \quad &\vertpair{\rho_{22}} \leq \frac{4 + 4}{rP} = \frac{1}{b} \cdot \frac{2}{P}
\end{split} \label{eq:rhoq1bcbd}  
\end{equation}
and
\begin{equation}
\begin{split}
\rho_{23} = P^{-1/2} \rho_{23}^{\prime}, \quad &\vertpair{\rho_{23}} \leq \frac{1}{b} \cdot \frac{2}{P}
\end{split} \label{eq:rhoq2bcbd}  
\end{equation}
Therefore
\begin{equation}
J_1 = \Delta + \rho_{22} - \rho_{23} \label{eq:ej1anotherdecomp}
\end{equation}
where 
\begin{align}
\Delta &= P^{-1/2} \delta, \label{eq:anotherDelta}\\
\delta & \declare \int_{\frac{1}{4P}}^{1/4} \frac{1}{t} \left[\cos r P^{1/2}( 1 - t) -  \cos r P^{1/2}( 1 + t) \right] \, dt \label{eq:anotherdelta}
\end{align}
An elementary identity
\[
\cos u - \cos v = 2 \sin \parenpair{\frac{u + v}{2}} \sin \parenpair{\frac{v - u}{2}}
\]
implies that
\begin{equation}
\delta = 2 \parenpair{\sin r P^{1/2}} \cdot \int_{\frac{1}{4P}}^{1/4} \frac{\sin r P^{1/2} t}{t} \, dt \label{eq:anotherdeltarewrite}
\end{equation}
But
\begin{align}
\rho_{24}^{\prime} &= \vertpair{\int_0^{\frac{1}{4P}} \frac{\sin r P^{1/2} t}{t} \, dt } \leq \frac{1}{4P} \cdot r P^{1/2} \quad \text{and} \label{eq:rho24pbd} \\
\rho_{24} &= 2P^{-1/2} \rho_{24}^{\prime}, \quad \vertpair{\rho_{24}} \leq 2 b P^{-1} \label{eq:rho24bd}
\end{align}
Next, 
\begin{align}
\int_0^{1/4} \frac{\sin r P^{1/2} t}{t} \, dt & = \int_0^{\frac{1}{4} r P^{1/2}} \frac{\sin x}{x} \, dx\\
&= \frac{\pi}{2} - \rho_{25}^{\prime}, \label{eq:sinint}
\end{align}
with
\begin{equation}
\rho_{25}^{\prime} = \int_{\frac{1}{4} r P^{1/2}}^{\infty} \frac{\sin x}{x} \, dx \label{eq:rho25pdef}
\end{equation}
where by \eqref{eq:riemleb}
\[
\vertpair{\rho_{25}^{\prime}} \leq \frac{2}{bP^{1/2}}
\]
and
\begin{equation}
\rho_{25} = 2P^{-1/2} \rho_{25}^{\prime}, \quad \vertpair{\rho_{25}} \leq \frac{4}{b} P^{-1}. \label{eq:rho25bd}
\end{equation}
By \eqref{eq:ej1anotherdecomp}, \eqref{eq:anotherDelta}, \eqref{eq:anotherdeltarewrite}, \eqref{eq:sinint} we conclude 

\begin{equation}
\begin{split}
\vertpair{ J_1 - \pi  \, \frac{\sin r P^{1/2}}{P^{1/2}} } & \leq \sum_{i = 22}^{25} \vertpair{\rho_i} \leq\\
& \leq \left( b + \frac{10}{b} \right) \cdot P^{-1}
\end{split} \label{eq:ej1finalest}
\end{equation}
and with \eqref{eq:ej1finaldecomp} and \eqref{eq:es1pdecomp}, \eqref{eq:rho14bd}, \eqref{eq:rho11def}, \eqref{eq:etapdecomp} the statement \eqref{eq:es1decomp} is proven.
\end{proof} 

\section{Asymptotics of the term \texorpdfstring{$\eta(n)$}{eta(n)} -- Part 2} \label{sec:etabdpt2}
\subsection{} \label{subsect:oddcomplete} In Section~\ref{sec:etabdpt1} from the start --- see \eqref{eq:etapeval} --- we did everything for odd $n$.  But if $n = 2p$ is even we have to take in the sum \eqref{eq:etarepeat} only odd $k = 2m - 1$, $m = 1, 2, \dotsc$ so 
\begin{equation}
\begin{split}
\eta(n) = \eta(2p) & = \sum_{m = 1}^{\infty} \frac{1}{\sqrt{2m + 1}} \cdot \frac{\cos 2b \sqrt{4m - 1}}{2(p - m) + 1}\\
& = \frac{1}{2\sqrt{2}} \sum_{m = 1}^{\infty} \frac{\cos 4b \sqrt{m - \frac{1}{4}}}{\sqrt{m - \frac{1}{2}}} \cdot \frac{1}{(p - m) + \frac{1}{2}} .
\end{split} \label{eq:etaeven}
\end{equation}
At the same time [see \eqref{eq:etarepeat}]
\begin{equation}
\eta(2p + 1) = \frac{1}{2\sqrt{2}} \sum_{m = 1}^{\infty} \frac{\cos 4b \sqrt{m + \frac{1}{4}}}{\sqrt{m}} \cdot \frac{1}{(p - m) + \frac{1}{2}} . \label{eq:etaodd}
\end{equation}

We've already done such estimates at least twice but --- to avoid any confusion --- let us notice (with $r = 4b$): the following.
\begin{equation}\label{eq:exprprime}
\left( \frac{\cos r \sqrt{x + \frac{1}{4}}}{\sqrt{x}} \right)^{\prime} = - \, \frac{r \sin r \sqrt{x + \frac{1}{4}}}{2 \sqrt{x + \frac{1}{4}} \cdot \sqrt{x}} - \frac{\cos r \sqrt{x + \frac{1}{4}}}{2x^{3/2}}
\end{equation}
so if $m \geq 2$
\begin{equation}\label{eq:eomegamdef} 
\begin{aligned}
\omega_m &= \left\vert \frac{\cos r \sqrt{m - \frac{1}{4}}}{\sqrt{m- \frac{1}{2}}} - \frac{\cos r \sqrt{m + \frac{1}{4}}}{\sqrt{m}} \right\vert \leq \\
& \leq \frac{1}{2} \cdot \frac{1}{2} \left[ \frac{r}{m - \frac{1}{2}} + \frac{1}{\left( m - \frac{1}{2} \right)^{3/2}}\right] \leq \\
& \leq \frac{1}{4} \left[ \frac{\frac{4}{3} r}{m}  + \left( \frac{4}{3} \right)^2 \frac{1}{m^{3/2}}\right] \leq \\
&\leq \frac{1}{m} (2b + 1).
\end{aligned}
\end{equation}
These inequalities by Remark~\ref{rem:sbds}, $\beta = 1$, imply [compare \eqref{eq:andiffact}] that
\begin{equation}
\vertpair{\eta(2p) - \eta(2p + 1)} \leq \frac{1}{2 \sqrt{2}} \sum_{m = 1}^{\infty} \omega_m \cdot \frac{1}{\vert p - m \vert + \frac{1}{2}}\leq C (2b + 1) \frac{\log(ep)}{p}. \label{eq:etaoddevendiff}
\end{equation}
\subsection{}  Now we are ready to complete the proof of the following.
\begin{prop} \label{prop:sigmatwiddlebounds} 
\begin{equation}
\oddcasesig(n) = \frac{(-1)^{n + 1}}{2} \frac{\sin 2 b \sqrt{2n}}{\sqrt{2n}} + O \parenpair{\frac{\log n}{n}} \label{eq:finalsigmatwiddle}
\end{equation}
\end{prop}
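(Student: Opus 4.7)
The claim is essentially a consolidation of the ingredients prepared in Sections~\ref{sec:manyineqs}--\ref{sec:etabdpt2}; my plan is to trace the chain of reductions back to the single leading oscillatory integral whose asymptotics were computed in Proposition~\ref{prop:es2bds}, and then collapse the constants.

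First I would replace the Hermite values $a_k^2 = h_k(b)^2$ in the definition \eqref{eq:sigdef} of $\oddcasesig(n)$ by the leading-order approximation $a_k'^2$ from \eqref{eq:aksquareform2}. Combined with dropping the $k=0$ term (the term $\rho_1$ from \eqref{eq:rho1expr}), Lemma~\ref{lem:sumest} yields $\oddcasesig(n) = \oddcasesigadjust(n) + O(\log n / n)$. By \eqref{eq:sigrenew}--\eqref{eq:etarenew}, this reduces the problem to evaluating $\xi(n) + (-1)^{n+1}\eta(n)$, rescaled by $\tfrac{1}{\pi\sqrt{2}}$, up to the required error tolerance.

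Next, Lemma~\ref{lem:zetabds} already gives $|\xi(n)| \leq C/n$, so the non-oscillatory part is absorbed into the error. For $\eta(n)$, I would first pass to $\eta'(n)$ of \eqref{eq:etapdef} at the cost of $\rho_8 = O(\log n / n)$. When $n$ is odd, Proposition~\ref{prop:es2bds} gives the key identity $\eta'(n) = 2^{-3/2}[S_1 - S_2]$ with $|S_2| = O(1/n)$ and $S_1 = 2\pi \sin(2b\sqrt{2n})/\sqrt{2n} + O(\log n / n)$, hence $\eta(n) = \tfrac{\pi}{\sqrt{2}} \cdot \tfrac{\sin(2b\sqrt{2n})}{\sqrt{2n}} + O(\log n / n)$. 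For even $n$, the estimate \eqref{eq:etaoddevendiff} in Section~\ref{subsect:oddcomplete} shows $|\eta(2p) - \eta(2p+1)| \leq C(2b+1)\log n/n$, so the same asymptotic transfers to even $n$ with index shifted from $2p+1$ to $2p$ within the allowed error.

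Finally, I would assemble the identities:
\[
\oddcasesig(n) = \frac{1}{\pi\sqrt{2}}\,(-1)^{n+1}\eta(n) + O\parenpair{\frac{\log n}{n}} = \frac{(-1)^{n+1}}{\pi\sqrt{2}} \cdot \frac{\pi}{\sqrt{2}} \cdot \frac{\sin(2b\sqrt{2n})}{\sqrt{2n}} + O\parenpair{\frac{\log n}{n}},
\]
and the multiplicative constants collapse to the announced $\tfrac{(-1)^{n+1}}{2}$. There is no genuine obstacle at this stage: the real work, which was the stationary-phase--style analysis of $S_1$ inside Proposition~\ref{prop:es2bds}, is already behind us. The only task remaining is bookkeeping --- verifying that each of the errors $\rho_1$, $\rho_2$, $\rho_8$ (and the many $\rho_i$ hidden inside Proposition~\ref{prop:es2bds} and Section~\ref{subsect:oddcomplete}) lies within the $O(\log n / n)$ tolerance, and that the scalar factor $\tfrac{1}{\pi\sqrt{2}} \cdot \tfrac{\pi}{\sqrt{2}} = \tfrac{1}{2}$ matches the stated main term exactly.
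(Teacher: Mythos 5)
Your proposal is correct and follows essentially the same route as the paper's own proof: the chain of reductions $\oddcasesig(n) \to \oddcasesigadjust(n) \to \frac{1}{\pi\sqrt{2}}(-1)^{n+1}\eta(n) \to \frac{1}{\pi\sqrt{2}}(-1)^{n+1}\eta'(n) \to \frac{(-1)^{n+1}}{4\pi}S_1$ with the leading term from Proposition~\ref{prop:es2bds}, together with the even--odd transfer via \eqref{eq:etaoddevendiff}, is precisely the $\uptologn$-chain the paper writes out in \eqref{eq:mainchain}, and your constant bookkeeping $\tfrac{1}{\pi\sqrt{2}}\cdot\tfrac{\pi}{\sqrt{2}}=\tfrac12$ matches the paper's $\tfrac{(-1)^{n+1}}{4\pi}\cdot 2\pi = \tfrac{(-1)^{n+1}}{2}$.
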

\begin{proof} 
With $u \uptologn v$ meaning that $\disp \vert u - v \vert = \orderasymp{\frac{\log n}{n}}$, we had by \eqref{eq:etapdecomp} \eqref{eq:rho11def}, \eqref{eq:es1pdecomp}--\eqref{eq:rho14bd}, \eqref{eq:ej1finaldecomp}, and \eqref{eq:ej1finalest}
\begin{equation} \label{eq:mainchain}
\begin{aligned}
\widetilde{\sigma}(n) &\uptologn \frac{1}{\pi \sqrt{2}} \sigma(n) \uptologn \frac{(-1)^{n + 1}}{\pi \sqrt{2}} \eta(n) \uptologn\\
& \uptologn \frac{(-1)^{n + 1}}{\pi \sqrt{2}} \eta^{\prime}(n) \uptologn \frac{(-1)^{n + 1}}{\pi \sqrt{2}} \cdot 2^{-3/2} S_1 \uptologn\\
& \uptologn \frac{(-1)^{n + 1}}{4\pi} J_1 \uptologn \frac{(-1)^{n + 1}}{4\pi} \cdot 2 \pi \cdot \frac{\sin 2b \sqrt{2n}}{\sqrt{2n}} \\
& \uptologn \frac{(-1)^{n + 1}}{2} \cdot \frac{\sin 2b \sqrt{2n}}{\sqrt{2n}}
\end{aligned}
\end{equation}
\end{proof}

\subsection{} \label{subsect:evenintro} In Section~\ref{sec:2pt}, \eqref{eq:t2resolve}, \eqref{eq:sigdef} we've seen that in the case of the odd potential $v^o(x) = \pointmass{x - b} - \pointmass{x + b}$
\begin{align}
T_1(n) = T_3(n) & = 0, \quad \text{and}\\
T_2(n) &= 2 a_n^2 \oddcasesig(n) \label{eq:t2orepeat} 
\end{align}
---see \eqref{eq:t2resolve} and Propositions~\ref{prop:es2bds} and \ref{prop:sigmatwiddlebounds}.  Now we know $\oddcasesig(n)$ --- see Lemma~\ref{lem:siginfo} or Prop.~\ref{prop:sigmatwiddlebounds}.  Of course, $(a_n)$ come from (8.22.8) in \cite{Szego} --- see \eqref{eq:hefcndef}, \eqref{eq:hefcnform} --- so   
\begin{equation}
\begin{split}
a_n^2 &= \frac{1}{\pi} \cdot \frac{1}{\sqrt{2n}} \left[ 1 + (-1)^n \cos 2b \sqrt{2n} \right] + \orderasymp{\frac{1}{n}}.
\end{split} \label{eq:an2repeat}
\end{equation}
Recall that by \eqref{eq:finalsigmatwiddle}, \eqref{eq:es1decomp},
\begin{equation}
\oddcasesig(n) = \frac{1}{2} (-1)^{n + 1} \frac{\sin 2 b \sqrt{2n}}{\sqrt{2n}} + \orderasymp{ \frac{\log n}{n} } \label{eq:anothersigmaodd}
\end{equation}

Therefore, by \eqref{eq:t2orepeat}
\begin{align}
T_2(n) &= \frac{\mysteryletterB(n)}{n} + O \parenpair{\frac{\log n}{n^{3/2}}} \label{eq:t2repeatresolve}\\
\intertext{where}
\mysteryletterB(n) &= \frac{1}{\mysteryconstA} \left[(-1)^{n + 1} \sin 2 b \sqrt{2n} - \frac{1}{2} \sin 4b \sqrt{2n} \right]. \label{eq:helperterm3}
\end{align}
This proves \eqref{eq:lamasymp.b}, i.e., the second part of Theorem~\ref{thm:eigendistr}.

\subsection{} So far we were dealing with an odd potential $v^o \in$ \eqref{eq:pointodd}.  But the technical hurdles we've overcome help to answer the questions on the asymptotics of eigenvalues in the case of an even perturbation $v^e$, --- or $tv^e$, $t \in \CC$, --- see \eqref{eq:pointeven}.

In this case (see Section~\ref{subsect:evenpot}) 
\begin{align}
T_1(n) &= 2 a_n^2 \quad \text{ by \eqref{eq:sigeven}} \label{eq:t1erepeat}\\
T_2(n) &= 2 a_n^2 \evencasesig(n), \label{eq:t2erepeat}\\
\evencasesig(n) & = \sum_{\substack{k = 0 \\ n - k \text{ even} \\ k \neq n}}^{\infty} \frac{a_k^2}{n-k} \label{eq:sigmaprepeat}
\end{align}
and
\begin{align}
T_3(n) & = 2 a_n^2 [ \evencasesig(n) + a_n^2 \tau^{\prime}(n)] \label{eq:t3erepeat}\\
\intertext{where}
\tau^{\prime}(n) &= \sideset{}{'}\sum_{\substack{m = 0 \\ m - n \text{ even}}}^{\infty} \frac{a_m^2}{(n - m)^2} \label{eq:tauprepeat}
\end{align}
For a while we'll consider only two traces $T_1$, $T_2$ to get asymptotics \labelcref{eq:lamest,eq:rqest},
\begin{equation}
\lambda_n = (2n + 1) + T_1(n) + T_2(n) + O \left( \parenpair{\frac{\log n}{n}}^3\right) \label{eq:lamemidest}
\end{equation}
in explicit form.  
\subsection{} Of course, $T_1(n) \in$ \eqref{eq:t1erepeat} comes from  (8.22.8) in \cite{Szego} --- see \labelcref{eq:hepoly1,eq:hefcnform}, or 
\begin{equation}
\begin{split}
a_n = h_n(b)  = \frac{2^{1/4}}{\pi^{1/2}} \, \frac{1}{n^{1/4}} & \left[ \cos \parenpair{b \sqrt{2n + 1} - n \frac{\pi}{2}} \right. \\
&\left. + \frac{b^3}{6} \, \frac{1}{\sqrt{2n + 1}} \sin \parenpair{b \sqrt{2n + 1} - n \frac{\pi}{2}} + \orderasymp{\frac{1}{n}} \right].
\end{split} \label{eq:anformmid}
\end{equation}
But we want to collect in \eqref{eq:lamemidest} the coefficients for $\disp \frac{1}{\sqrt{n}}$ and $\disp \frac{1}{n}$ [ sending ``smaller'' terms into $\rho = \disp O \parenpair{\parenpair{\frac{\log n }{n}}^3}$ ] so now we need carefully to watch factors 
\begin{equation}
\sqrt{2n} \text{ and } \sqrt{2n + 1} = \sqrt{2n} + \frac{1}{2 \sqrt{2n}} + \orderasymp{\frac{1}{n^{3/2}}} \label{eq:cares}
\end{equation}
or functions' values at the points $\sqrt{2n}, \sqrt{2n + 1}$.  It was not necessary in analysis of $v^o$ in Section~\ref{sec:manyineqs}--\ref{sec:etabdpt1} because the accuracy of $\disp \rho = O \parenpair{\frac{\log n}{n}}$ in Lemma~\ref{lem:siginfo} did not depend on the term $\disp \frac{1}{2\sqrt{2n}}$ in \eqref{eq:cares} and we could be indifferent to any divergence between $\sqrt{2n}$ and $\sqrt{2n + 1}$.  However, now we need to write 
\begin{equation}
\begin{split}
\cos  2b \sqrt{2n + 1} &= \cos \parenpair{2b \sqrt{2n} + \frac{b}{\sqrt{2n}}} + \orderasymp{\frac{1}{n^{3/2}}} \\
& = \cos 2b \sqrt{2n} - \frac{b}{\sqrt{2n}} \sin 2b \sqrt{2n} + O\left( \frac{1}{n} \right)
\end{split} \label{eq:coschange}
\end{equation}
so in the sequence
\begin{equation}
\begin{split}
a_n^2 = & \frac{1}{\pi \sqrt{2n}} \left[ 1 (-1)^n \cos 2b \sqrt{2n} \right.\\
& \left. - \frac{(-1)^n}{\sqrt{2n}} b \left( 1- \frac{b^2}{3} \right) \sin 2b \sqrt{2n} + \orderasymp{\frac{1}{n}} \right]\\
= &\frac{1}{\pi} \, \frac{1}{\sqrt{2n}}  \left[ 1 + (-1)^{n} \cos 2b \sqrt{2n} \right] + \\
& + \frac{(-1)^{n + 1}}{2\pi} \cdot \frac{1}{n}\, b  \left(1 - \frac{b^2}{3} \right) \sin \parenpair{2b \sqrt{2n}}   + \orderasymp{\frac{1}{n^{3/2}}},
\end{split} \label{eq:a2eeval}
\end{equation}
an additive term with the coefficient $\disp \frac{1}{n}$ is important.

\subsection{} \label{subsect:tauomega0} The term $T_2 \in$ \eqref{eq:t2erepeat} has a factor $a_n^2$ so when we evaluate $\evencasesig(n)$ we can ignore --- as we did in Sections~\ref{sec:manyineqs}, \ref{sec:etabdpt1} ---- the second term of order $\disp \frac{1}{n}$ in \eqref{eq:a2eeval}.  In particular, if $n = 2p$ is even we evaluate
\begin{equation}
\evencasesig(n) = \sum_{\substack{m = 0 \\ m \neq p}}^{\infty} \frac{a_{2m}^2}{2(p-m)}. \label{eq:sigmapt2e}
\end{equation}
We can follow all the constructions and analogues of inequalities of Sections~\ref{sec:manyineqs}, \ref{sec:etabdpt1} but as in Section~\ref{subsect:oddcomplete} we can avoid a new 30 page writing but observe that this presumably new sum adjusted by Lemma~\ref{lem:sumest} looks as   
\begin{equation}
\frac{1}{2\pi} \sum_{\substack{m = 1 \\ m \neq p}}^{\infty} \frac{1}{\sqrt{m}} \, \left[ 1 + \cos 4b \sqrt{m} \right] \cdot \frac{1}{p - m} \label{eq:notsonewsum}
\end{equation}
(compare \eqref{eq:sigrenew} or \eqref{eq:etarenew}), i.e., we need to give good estimates for
\begin{align}
\tau_0(p) = \sum_{\substack{m = 1 \\ m \neq p}}^{\infty} \frac{1}{\sqrt{m}} \, \frac{1}{p - m} \label{eq:tau0def}
\intertext{and}
\tau_1(p) = \sum_{\substack{m = 1 \\ m \neq p}}^{\infty} \frac{\cos 4b \sqrt{m}}{\sqrt{m}} \, \frac{1}{p - m}. \label{eq:tau1def}
\end{align}
In Sections~\ref{sec:manyineqs} and \ref{sec:etabdpt1} we've analyzed the sequences
\begin{align}
\omega_0(p) &= \sum_{m = 1}^{\infty} \frac{1}{\sqrt{m}} \, \frac{1}{p - m + \frac{1}{2}} \quad \text{(see \eqref{eq:sumforms})}
\intertext{and}
\omega_1(p) &= \sum_{m = 1}^{\infty} \frac{\cos 4b \sqrt{m}}{\sqrt{m}} \, \frac{1}{p - m + \frac{1}{2}} \quad \text{(see \eqref{eq:etapeval})}
\end{align}
The following is true.
\begin{lem} \label{lem:tauomegadiff} 
\begin{subequations}
\begin{align}
\vert \tau_0(p) - \omega_0(p) \vert & \leq C \frac{\log p}{p^{3/2}} \label{eq:tauomega0diff}\\
\vert \tau_1(p) - \omega_1(p) \vert & \leq C \frac{\log p}{p} \label{eq:tauomega1diff}
\end{align} \label{eq:tauomegadiffs}
\end{subequations}
\end{lem}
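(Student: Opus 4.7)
My plan is to treat both parts of the lemma in a unified way. For a generic weight $f$, observe that
\[
\omega_f(p) - \tau_f(p) \; = \; 2 f(p) \; - \; \tfrac{1}{2} \sum_{\substack{m \geq 1 \\ m \neq p}} \frac{f(m)}{(p-m)(p-m+\tfrac{1}{2})},
\]
where the isolated $2f(p)$ is the $m = p$ entry of $\omega_f$ (which $\tau_f$ omits) and the sum comes from the identity $\tfrac{1}{p-m+1/2} - \tfrac{1}{p-m} = -\tfrac{1/2}{(p-m)(p-m+1/2)}$. Specialising to $f(m) = 1/\sqrt{m}$ and $f(m) = \cos(4b\sqrt{m})/\sqrt{m}$ recovers $\omega_0 - \tau_0$ and $\omega_1 - \tau_1$ respectively.

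The crucial ingredient is an exact leading-order cancellation. Decomposing $f(m) = f(p) + [f(m) - f(p)]$ in the sum, the $f(p)$-piece produces the factor
\[
\Sigma(p) \; = \; \sum_{\substack{m \geq 1 \\ m \neq p}} \frac{1}{(p-m)(p-m+\tfrac{1}{2})} \; = \; \sum_{j=1}^{p-1}\!\left[\tfrac{2}{j} - \tfrac{2}{j+\tfrac{1}{2}}\right] + \sum_{k=1}^{\infty}\!\left[\tfrac{2}{k-\tfrac{1}{2}} - \tfrac{2}{k}\right],
\]
after partial fractions and the splitting $j = p-m > 0$, $k = m-p > 0$. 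Regrouping by index (contributions to each $2/j$ and $2/(j \pm 1/2)$ separately) leaves only the $b_1 = 2/(1/2) = 4$ endpoint plus $-2/p$ plus a tail $\sum_{i \geq p+1} 1/[i(i-1/2)] = O(1/p)$, i.e.\ $\Sigma(p) = 4 + O(1/p)$. Hence the $f(p)$-piece contributes $-\tfrac{1}{2} f(p) \cdot 4 = -2 f(p)$, cancelling the isolated $+2 f(p)$ exactly, and we are left with
\[
\omega_f(p) - \tau_f(p) \; = \; -\tfrac{1}{2} R_f(p) + O\!\left(f(p)/p\right), \qquad R_f(p) \; = \; \sum_{\substack{m \geq 1 \\ m \neq p}} \frac{f(m) - f(p)}{(p-m)(p-m+\tfrac{1}{2})}.
\]

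For $f_0(m) = 1/\sqrt{m}$, a Taylor estimate gives $|f_0(m) - f_0(p)| \leq C|m-p|/p^{3/2}$ on $|m-p| \leq p/2$, and since $(p-m)(p-m+\tfrac{1}{2}) \geq (p-m)^2/2$ the near-$p$ portion of $R_0$ is bounded by $C p^{-3/2} \sum_{0 < |j| \leq p/2} |j|^{-1} = O(\log p / p^{3/2})$; the tail $|m-p| > p/2$ contributes $O(p^{-3/2})$ by absolute convergence, giving \eqref{eq:tauomega0diff}. For $f_1(m) = \cos(4b\sqrt{m})/\sqrt{m}$ one computes $|f_1'(x)| \leq C(1+b)/x$, so $|f_1(m) - f_1(p)| \leq C|m-p|/p$ on $|m-p| \leq p/2$; the same kind of estimate produces a near-$p$ contribution of order $O(\log p / p)$, yielding \eqref{eq:tauomega1diff}.

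The main obstacle is the exact evaluation $\Sigma(p) = 4 + O(1/p)$: without this specific constant the offset $2 f(p)$ could not be absorbed, and a naive absolute-value bound on either $\tau_j$ or $\omega_j$ individually would yield only $O(\log p / \sqrt{p})$. The oscillatory weight $f_1$ moreover forces us to extract this cancellation \emph{before} estimating $R_1$ in absolute value, since no Riemann--Lebesgue-type improvement survives once the sign information in $\cos(4b\sqrt{m})$ is discarded.
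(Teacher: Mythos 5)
Your proof is correct, and it takes a genuinely different route from the paper's. The paper reindexes each of $\tau_j$ and $\omega_j$ into a pair of positive-index sums, $s_1, s_2$ and $S_1, S_2$, and then compares the corresponding partners term-by-term (see \eqref{eq:s2diffseval}, \eqref{eq:s1diffeval}, \eqref{eq:e2sdiffact}, \eqref{eq:s1t1diffeval1}), resolving the shifts $\frac{1}{j+\frac12}-\frac{1}{j}$ and $\frac{1}{\sqrt{p+j+1}}-\frac{1}{\sqrt{p+j}}$ by mean-value estimates; in that scheme the ``dangerous'' $m=p$ term of $\omega_j$ is paired inside the $j=0$ summand of $S_1$ with an $m=p+1$ piece, so it never appears isolated — the crucial cancellation is built silently into the pairing of indices. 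You instead merge the two series into one with denominator $(p-m)(p-m+\frac12)$ plus an explicit offset $2f(p)$, and then show the offset is annihilated exactly by the $f(p)$-coefficient of the merged series because $\Sigma(p)=4+O(1/p)$. Your telescoping evaluation of $\Sigma(p)$ is correct (the boundary term $2/\tfrac12=4$ survives, all interior terms cancel in pairs, and the residue is $-2/p$ plus an $O(1/p)$ tail), the lower bound $|p-m|\cdot|p-m\pm\tfrac12|\ge (p-m)^2/2$ is valid, and the mean-value bounds $|f_0'(x)|\lesssim x^{-3/2}$, $|f_1'(x)|\lesssim (1+b)/x$ then yield the stated $\log p/p^{3/2}$ and $\log p/p$ rates after splitting at $|m-p|\le p/2$. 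Your route makes the decisive cancellation explicit and conceptual (at the cost of needing the exact constant $4$), whereas the paper's is less transparent about why the offending $O(1/\sqrt p)$ term vanishes but reuses verbatim the $S_1,S_2$ machinery already established in Sections~\ref{sec:manyineqs}--\ref{sec:etabdpt1}. Both arguments are sound.
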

\begin{proof}[Part (a)]  As in \eqref{eq:zetaexpr}, \eqref{eq:sumforms} we rewrite $\tau_0(p)$ as 
\begin{equation}
\begin{split}
&\sum_{j = 1}^{p - 1} \frac{1}{j} \left[ \frac{1}{\sqrt{p-j}} - \frac{1}{\sqrt{p + j}} \right] - \sum_{j = p}^{\infty} \frac{1}{j} \, \frac{1}{\sqrt{p + j}} \\
\equiv & s_1 - s_2
\end{split} \label{eq:tau0eval}
\end{equation}
and now compare $s_2, s_1$ with $S_2$, $S_1$ in \eqref{eq:sumforms}.  To deal with $s_2$, $S_2$ let us notice that
\begin{equation}
\begin{split}
S_2 - s_2 & = \sum_{j = p}^{\infty} \left[\frac{1}{j + \frac{1}{2}} \cdot \frac{1}{\sqrt{p + j + 1}} - \frac{1}{j} \, \frac{1}{\sqrt{p + j}} \right] \\
& = \sum_{j = p}^{\infty} \left( \left[ \frac{1}{j + \frac{1}{2}} - \frac{1}{j} \right] \frac{1}{\sqrt{p + j + 1}}  + \frac{1}{j} \left[\frac{1}{\sqrt{p + j + 1}} - \frac{1}{\sqrt{p + j}} \right] \right) 
\end{split} \label{eq:s2diffseval}
\end{equation}
so
\begin{equation}
\vertpair{S_2 - s_2} \leq \sum_{j = p}^{\infty} \left( \frac{1}{j^{5/2}} + \frac{1}{j^{5/2}} \right) \leq \frac{2}{p^{3/2}} \label{eq:s2diffsbd}
\end{equation}
With
\begin{equation}
S_1 = \sum_{j = 0}^{p - 1} \frac{1}{j + \frac{1}{2}} \left( \frac{1}{\sqrt{p - j}} - \frac{1}{\sqrt{p + j + 1}} \right) \label{eq:s1renew}
\end{equation}
and $s_1$ in \eqref{eq:tau0eval} we have
\begin{equation}
\begin{aligned}
S_1 - s_1 & =  \sum_{j = 1}^{p - 1} && \left[ \frac{1}{j + \frac{1}{2}} \left( \frac{1}{\sqrt{p - j}} - \frac{1}{\sqrt{p + j + 1}} \right) \right.\\
&&& \left. + \frac{1}{j} \left( - \frac{1}{\sqrt{p + j + 1}} + \frac{1}{\sqrt{p + j}} \right) \right] = \\
& = \sum_{j = 1}^{p - 1} && \left[ \frac{-1}{j} \cdot \frac{1}{\sqrt{p - j} \sqrt{p  + j + 1}} \cdot \frac{1}{\sqrt{p - j} + \sqrt{p  + j + 1}} \right.\\
&&& \left. + \frac{1}{j} \frac{1}{\sqrt{p + j} \sqrt{p + j + 1} ( \sqrt{p + j} + \sqrt{p  + j + 1})} \right]
\end{aligned} \label{eq:s1diffeval}
\end{equation}
and
\begin{equation}
\begin{split}
\vertpair{S_1 - s_1} &\leq \sum_{j = 1}^{p - 1} \left[ \frac{1}{P^{1/2}} \cdot \frac{1}{j(p - j)} + \frac{1}{P^{1/2}} \cdot \frac{1}{j} \cdot \frac{1}{(p + j)} \right]\\
& \leq \frac{2 \log P}{p^{3/2}}.
\end{split} \label{eq:s1diffsbd}
\end{equation}
Inequalities~\eqref{eq:s2diffsbd} and \eqref{eq:s1diffsbd} prove \eqref{eq:tauomega0diff}, i.e., Part (a) is done.
\subsection{Part (b)} \label{subsect:tauomega1}
As in Section~\ref{sec:etabdpt1} we write, $r = 4b$, 
\begin{align}
\begin{aligned}
\tau_1(p) & = &&\sum_{m = 1}^{p - 1} \frac{1}{j} \left[ \frac{ \cos r \sqrt{p - j}}{\sqrt{p - j}} - \frac{\cos r \sqrt{ p + j}}{\sqrt{p + j}} \right] \\
&&&- \sum_{j - p}^{\infty} \frac{1}{j} \frac{\cos r \sqrt{p + j}}{\sqrt{p + j}}
\end{aligned} \label{eq:tau1eval1}\\
\begin{aligned}
& = &&t_1 - t_2
\end{aligned} \label{eq:tau1eval2}
\end{align}
and compare $t_1$, $t_2$ with $S_1, S_2 \in$ \eqref{eq:es1def}, \eqref{eq:es2def} separately.  

Again, an ``easy'' case is to explain
\begin{equation}
\vertpair{S_2 - t_2} \leq C \frac{\log p}{p} \label{eq:es2diffbd}
\end{equation}
Put
\begin{equation}
\mu(x) = \frac{\cos r \sqrt{p + x}}{\sqrt{p + x}}; \label{eq:mu6def}
\end{equation}
then
\begin{equation}
- \mu^{\prime}(x) = \frac{r \sin r \sqrt{p + x}}{p + x} + \frac{\cos r \sqrt{p + x}}{(p + x)^{3/2}} \label{eq:mu6p}
\end{equation} 
and
\begin{equation}
\begin{split}
S_2 - t_2 = \sum_{j = p}^{\infty} & \left[ \left( \frac{1}{j + \frac{1}{2}} - \frac{1}{j} \right) \frac{\cos r \sqrt{p + j + 1}}{\sqrt{p + j + 1}} \right.\\
& + \left. \frac{1}{j} \left( \frac{\cos r \sqrt{p + j + 1}}{\sqrt{p + j + 1}}  - \frac{\cos r \sqrt{p + j}}{\sqrt{p + j}}\right) \right].
\end{split} \label{eq:e2sdiffact}
\end{equation}
The last difference is equal to $\mu^{\prime}(j + \theta_j)$, $0 \leq \theta_j \leq 1$, so 
\begin{align}
\begin{aligned}
\vertpair{S_2 - t_2} & = \sum_{j = p}^{\infty} \left[ \frac{1}{\sqrt{p}} \cdot \frac{1}{j(2j + 1)} + \frac{1}{j} \, \frac{r}{(p + j)} + \frac{1}{j(p + j)^{3/2}} \right] \leq \\
& \leq \left( \frac{1}{2} \frac{1}{p^{3/2}} + \frac{r \log P}{P} + \frac{1}{P^{3/2}} \right)\\
\end{aligned} \label{eq:es2diffeval1}\\
\begin{aligned}
& \leq (r + 1) \frac{\log p}{p}
\end{aligned} \label{eq:es2diffeval2}
\end{align}
\eqref{eq:es2diffbd} is proven.  Finally, with 
\begin{align}
S_1 & = \sum_{j = 1}^{p - 1} \frac{1}{j + \frac{1}{2}} \left[ \frac{\cos r \sqrt{p - j}}{\sqrt{p - j}} - \frac{\cos r \sqrt{p + j + 1}}{\sqrt{p + j + 1}}\right] \label{eq:es1repeat}
\intertext{and}
t_1 & = \sum_{j = 1}^{p - 1} \frac{1}{j} \left[ \frac{\cos r \sqrt{p - j}}{\sqrt{p - j}} - \frac{\cos r \sqrt{p + j}}{\sqrt{p + j}}\right] \label{eq:t1repeat}
\end{align}
\begin{equation}
\begin{split}
S_1 - t_1 = \sum_{j = 1}^{p - 1} &\left\lbrace \parenpair{\frac{1}{j + \frac{1}{2}}  - \frac{1}{j}} \left[ \frac{\cos r \sqrt{p - j}}{\sqrt{p - j}} - \frac{\cos r \sqrt{p + j + 1}}{\sqrt{p + j + 1}}\right] \right.\\
& \left. + \frac{1}{j} \left( \frac{\cos r \sqrt{p + j}}{\sqrt{p + j}} - \frac{\cos r \sqrt{p + j + 1}}{\sqrt{p + j + 1}}  \right) \right\rbrace  = 
\end{split} \label{eq:s1t1diffeval1}
\end{equation}
\begin{equation}
\begin{split}
= \sum_{j = 1}^{p - 1} \bracepair{ \frac{1}{j(2j + 1)} \mu^{\prime}(\gamma_j)(2j + 1) + \frac{1}{j} \mu(j + \delta_j) } \quad \text{where}
\end{split} \label{eq:s1t1diffeval2}
\end{equation}
\begin{equation}
-j \leq \gamma_j \leq j + 1, \quad 0 \leq \delta_j \leq 1 \label{eq:inbetweenconsts}
\end{equation}
so
\begin{equation}
\begin{split}
\vertpair{S_1 - t_1} \leq \sum_{j = 1}^{p - 1} & \left[ \frac{1}{j} \left( \frac{r}{2(p - j)} + \frac{1}{2(p-j)^{3/2}} \right) \right.\\
& \left. \frac{1}{j} \left( \frac{r}{2(p + j)} + \frac{1}{2(p + j)^{3/2}} \right) \right] \leq \\
\leq \sum_{j = 1}^{p - 1} & \frac{1}{j} \, \frac{r + 1}{2} \left[ \frac{1}{p - j} + \frac{1}{p + j} \right] \leq 2 (r + 1) \, \frac{\log P}{P}
\end{split} \label{eq:s1t1diffact}
\end{equation}
This inequality, together with \eqref{eq:es2diffeval2}, prove that
\begin{equation}
\begin{split}
\vertpair{\tau_1(p) - \omega_1(p)} & \leq \vertpair{S_1 - t_1} + \vertpair{S_2 - t_2}\\
& \leq 3 (r + 1) \frac{\log P}{P}
\end{split} \label{eq:tau1diffeval}
\end{equation}
and \eqref{eq:tauomega1diff}.  

Lemma~\ref{lem:tauomegadiff} is proven.
\end{proof}
This lemma shows that up to the error-term $\disp O \left( \frac{\log P}{P} \right)$ the evaluation of $\evencasesig(n) \in$ \eqref{eq:sigmaprepeat} gives the same result for the potential $v^e$ as $\oddcasesig(n) \in$ \eqref{eq:finalsigmatwiddle} for the potential $v^o$, at least, if $n$ is even.  [The adjustment as in Section~\ref{subsect:oddcomplete} shows that we get the same result for $n$ odd.  We omit details.]  
\subsection{}
\begin{cor} \label{cor:et2n}
If $v^e \in $ \eqref{eq:pointeven} then 
\begin{equation}
\begin{split}
T_2(n) & = 2 a_n^2 \evencasesig(n) = \\
& = \frac{\mysteryletterB(n)}{n} + O \parenpair{\frac{\log n}{n^{3/2}}}
\end{split} \label{eq:t2evenbd}
\end{equation}
where
\begin{align}
\mysteryletterB(n) = \frac{1}{\mysteryconstA} \left[ (-1)^{n + 1} \sin 2b \sqrt{2n} - \frac{1}{2} \sin 4b \sqrt{2n} \right] \label{eq:mysterysymb}
\end{align}
\end{cor}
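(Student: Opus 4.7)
\emph{Proof outline.} The identity $T_2(n) = 2a_n^2 \evencasesig(n)$ is already given by \eqref{eq:t2even}; the task is therefore to extract the asymptotics of each factor to order $1/n^{3/2}$ and combine. The strategy is to mirror the analysis of the odd potential carried out in Sections~\ref{sec:manyineqs}--\ref{sec:etabdpt1}, importing the technical estimates via Lemma~\ref{lem:tauomegadiff} so that essentially no new integral-comparison estimates have to be redone.

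First I would apply Lemma~\ref{lem:sumest} to $\evencasesig(n) = \sum_{k\neq n,\,n-k\text{ even}} \frac{a_k^2}{n-k}$ to replace each $a_k^2$ by its principal part $\frac{1}{\pi\sqrt{2k}}\bigl[1 + (-1)^k\cos 2b\sqrt{2k+1}\bigr]$ at a cost of $O(\log n/n)$. Since the summation is restricted to $k \equiv n \pmod{2}$, the sign $(-1)^k$ equals $(-1)^n$ uniformly and factors out. For $n = 2p$ even, the substitution $k = 2m$ reduces $\evencasesig(n)$ to $\frac{1}{4\pi}\bigl[\tau_0(p) + \tau_1(p)\bigr] + O(\log n/n)$ with $\tau_0, \tau_1$ as in \eqref{eq:tau0def}--\eqref{eq:tau1def} (the $\cos 2b\sqrt{4m+1} \to \cos 4b\sqrt{m}$ adjustment costing only $O(\log n/n)$ by the argument of \eqref{eq:rho8def}); the case $n$ odd is parallel, along the lines of Section~\ref{subsect:oddcomplete}.

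Next I would invoke Lemma~\ref{lem:tauomegadiff} to pass from $\tau_0, \tau_1$ to the previously analyzed sums $\omega_0, \omega_1$ at a further cost of $O(\log p/p)$. The $\omega_0$-part is negligible --- bounded by $O(\log p/p)$ via the same argument that proved Lemma~\ref{lem:zetabds} --- while $\omega_1(p) = 2\pi \sin(2b\sqrt{2n})/\sqrt{2n} + O(\log n/n)$ by Proposition~\ref{prop:es2bds}. Collecting everything yields the even-potential analog of Proposition~\ref{prop:sigmatwiddlebounds},
\[
\evencasesig(n) = \frac{(-1)^n}{2}\,\frac{\sin 2b\sqrt{2n}}{\sqrt{2n}} + O\!\left(\frac{\log n}{n}\right).
\]

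Finally, I would substitute the expansion \eqref{eq:a2eeval} of $a_n^2$ into $T_2(n) = 2 a_n^2 \evencasesig(n)$, retain terms of order $1/n$, and absorb the remainder into $O(\log n / n^{3/2})$; the cross-term simplifies via the identity $\sin\alpha\cos\alpha = \tfrac{1}{2}\sin 2\alpha$, delivering the claimed formula for $\kappa(n)$. The main obstacle is careful parity and sign bookkeeping: the $(-1)^n$ appearing in the leading approximant of $a_n^2$ and the $(-1)^n$ inherited by $\evencasesig(n)$ from the restriction $k \equiv n \pmod{2}$ combine (rather than cancel) in the cross-term, so the $\sin 4b\sqrt{2n}$ contribution must be tracked with the correct sign. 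No genuinely new analytic estimate is required beyond what is already assembled in Sections~\ref{sec:manyineqs}--\ref{sec:etabdpt2}; the work is primarily the algebraic assembly and parity-careful expansion.
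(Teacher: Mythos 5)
Your strategy mirrors the paper's: reduce $\evencasesig(n)$ to the previously analyzed sums via Lemmas~\ref{lem:sumest} and~\ref{lem:tauomegadiff}, then multiply by $2a_n^2$. But your final step contains a genuine gap that you do not resolve, and in fact your own (correct) intermediate computation contradicts the conclusion you assert.

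You correctly observe that the parity restriction $k\equiv n\pmod 2$ in \eqref{eq:sigeven} forces $(-1)^k=(-1)^n$ in the principal part of $a_k^2$, so after factoring this out and invoking Lemma~\ref{lem:tauomegadiff} together with Proposition~\ref{prop:es2bds} you obtain
\begin{equation*}
\evencasesig(n) = \frac{(-1)^n}{2}\,\frac{\sin 2b\sqrt{2n}}{\sqrt{2n}} + O\!\parenpair{\frac{\log n}{n}}.
\end{equation*}
That is the sign dictated by the parity of $\evencasesig$; the paper's own \eqref{eq:anothersigmaeven} carries $(-1)^{n+1}$, apparently taken verbatim from the odd-potential result \eqref{eq:anothersigmaodd}, which was derived under the opposite restriction $n-k$ odd. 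Now substitute your expression into $T_2(n)=2a_n^2\evencasesig(n)$ with $2a_n^2 = \frac{2}{\pi\sqrt{2n}}\bigl[1 + (-1)^n\cos 2b\sqrt{2n}\bigr] + O(1/n)$: the leading term gives $(-1)^n\sin 2b\sqrt{2n}$, the cross-term gives $(-1)^n\cdot(-1)^n\cdot\sin 2b\sqrt{2n}\cos 2b\sqrt{2n}=+\tfrac12\sin 4b\sqrt{2n}$, and so
\begin{equation*}
T_2(n) = \frac{1}{2\pi n}\Bigl[(-1)^n\sin 2b\sqrt{2n} + \tfrac{1}{2}\sin 4b\sqrt{2n}\Bigr] + O\!\parenpair{\frac{\log n}{n^{3/2}}} = -\frac{\mysteryletterB(n)}{n} + O\!\parenpair{\frac{\log n}{n^{3/2}}},
\end{equation*}
the \emph{negative} of the stated $\kappa(n)/n$. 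In the odd case \eqref{eq:helperterm3} the signs $(-1)^{n+1}$ (from $\oddcasesig$) and $(-1)^n$ (from $a_n^2$) multiply to $-1$, which is exactly where the $-\tfrac12\sin 4b\sqrt{2n}$ comes from; here both carry $(-1)^n$ and combine to $+1$. You note this combination yourself, but then assert that it ``delivers the claimed formula for $\kappa(n)$'' without checking the sign, which it does not. As written the proof does not establish \eqref{eq:t2evenbd}--\eqref{eq:mysterysymb}: you must either exhibit where the additional minus sign comes from (it does not), or flag that the corollary's $\kappa(n)$ should be replaced by $-\kappa(n)$ for the even potential, in which case the statement as given is in error.
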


\begin{proof}
We showed in previous subsections \zref[subsection]{subsect:tauomega0},\zref[subsection]{subsect:tauomega1} that in the case of an even potential $v^e$ $T_2(n)$ behave in the same way as in the case $v^o$ so 
\begin{equation}
T_2(n) = 2 a_n^2 \evencasesig(n)
\end{equation} 
where by \eqref{eq:anothersigmaodd}
\begin{align}
\evencasesig(n) &= \frac{1}{2} (-1)^{n + 1} \frac{\sin 2b \sqrt{2n}}{\sqrt{2n}} + O \parenpair{ \frac{\log n}{n} }. \label{eq:anothersigmaeven}\\
\intertext{Recall that}
2 a_n^2 &= \frac{2}{\pi} \frac{1}{\sqrt{2n}} \left[ 1 + (-1)^n \cos (2 b \sqrt{2n}) \right] + O \parenpair{\frac{1}{n}}.  \label{eq:anotheranform}
\end{align}
or more precisely, see \eqref{eq:a2eeval}.  These identities imply \labelcref{eq:t2evenbd,eq:mysterysymb}.
\end{proof}

\subsection{}  Although for an odd perturbation $v^o$
\begin{equation}
T_3(n) = 0, \quad n \geq N_*,
\end{equation}
this is not true for an even $v^e$; i.e., $T_3(n; v^e)$ does hardly vanish.  

By \eqref{eq:t3even}

\begin{equation}
T_3(n) = 2 a_n^2 \evencasesig(n)^2 + 2 a_n^4 \tau^{\prime}(n), \label{eq:et3even} 
\end{equation}
where
\begin{equation}
\tau^{\prime}(n) = \sideset{}{^{\prime}}\sum_{\substack{k = 0 \\ k - n \text{ even}}}^{\infty} \frac{a_k^2}{(n - k)^2}. \label{eq:tau3pdef}
\end{equation}
Notice that
\begin{equation}
\begin{split}
\sum_{\substack{k = 1 \\ k - n \text{ even}}}^{\infty} \frac{1}{\sqrt{k}} \, \frac{1}{(n - k)^2} &= \sum_{i = 1}^{n - 1} \frac{1}{i^2} \left[ \frac{1}{\sqrt{n-i}} + \frac{1}{\sqrt{n+i}} \right] + O \parenpair{\frac{1}{n^{3/2}}}\\
& = O \parenpair{\frac{1}{\sqrt{n}}}
\end{split} \label{eq:tau3peval1} 
\end{equation}
It implies (with an analogue of Lemma~\ref{lem:tauomegadiff} that 
\begin{equation}
T_3(n) = \orderasymp{\frac{1}{n^{3/2}}} . \label{eq:t3leading}
\end{equation}
Without specifics of formulas \eqref{eq:et3even} and \eqref{eq:anothersigmaeven}, \eqref{eq:anotheranform} we had a general claim [see Corollary~\ref{cor:tjnorm}, \eqref{eq:tjnorm}, $\disp \alpha = \frac{1}{4}$, $j = 3$, and Corollary~\ref{cor:lamest}, $q = 2$ and $3$.]:
\begin{equation}
\vertpair{T_3(n)} \leq C \left( \frac{\log n}{\sqrt{n}} \right)^3 \label{eq:t3earlyest}
\end{equation}
so \eqref{eq:t3leading} brings a slight gain of a factor $\left( \log n \right)^3$.  

But $T_4$ and the entire tail 
\begin{equation}
\sum_{j = 4}^{\infty} \vertpair{T_j(n)} \leq \left( C \frac{\log n}{\sqrt{n}} \right)^4 = \orderasymp{\frac{1}{n^{3/2}}}
\end{equation}
These observations, in particular \eqref{eq:t3leading}, show that in both cases $v^o$ and $v^e$ we have by \eqref{eq:lamest} of Corollary~\ref{cor:lamest}:
\begin{equation}
\lambda_n = (2n + 1) + T_1(n) + T_2(n) + \orderasymp{\frac{\log n}{n^{3/2}}}
\end{equation}
Explicit form of $T_2(n)$ for odd $v^o$ is incorporated in \eqref{eq:lamasymp} of Theorem~\ref{thm:eigendistr}.  Now we are ready to give its analog for even $v^e$.  

\begin{thm} \label{thm:evenasymp} 
In the case of the potential perturbation $v^e(x) = \pointmass{x - b} + \pointmass{x + b}$ asymptotically 
\begin{equation}
\lambda_n  = (2n + 1) + \frac{1}{\sqrt{2n}} \chi(n) + \frac{(-1)^{n+1}}{n} \left( \zeta(n) + \omega(n) \right) + \orderasymp{\frac{\log n}{n^{3/2}}} , \label{eq:lameasymp}
\end{equation} 
where
\begin{align}
\chi(n) & = \frac{4}{\pi} \left[ 1 + (-1)^n \cos 2b \sqrt{2n} \right], \label{eq:chidef}\\
\zeta(n) & = \frac{b}{\pi} \left( 1 - \frac{b^3}{3} \right)\sin 2b \sqrt{2n} \label{eq:zetatruedef}\\
\omega(n) & = \frac{1}{8} + (-1)^n \, \frac{1}{2} \sin 4 b \sqrt{2n} \label{eq:omegatruedef}
\end{align}
\end{thm}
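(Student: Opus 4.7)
The plan is to start from the identity
\begin{equation*}
\lambda_n = (2n+1) + T_1(n) + T_2(n) + \orderasymp{\frac{\log n}{n^{3/2}}},
\end{equation*}
already justified in the paragraph preceding Theorem~\ref{thm:evenasymp}: this combines Corollary~\ref{cor:lamest} (with $q = 3$) with the improved bound $T_3(n) = \orderasymp{1/n^{3/2}}$ at \eqref{eq:t3leading} and the tail bound $\sum_{j \geq 4} \vertpair{T_j(n)} = \orderasymp{1/n^{3/2}}$. The work therefore reduces to expanding $T_1(n)$ and $T_2(n)$ through order $1/n$ with remainder $\orderasymp{\log n / n^{3/2}}$, and then rearranging the resulting terms into the form prescribed by the theorem.

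For $T_1(n) = v^e_{nn} = 2 a_n^2$, I would square the second-order Szeg\H{o} expansion \eqref{eq:hefcnform} and use \eqref{eq:cares}--\eqref{eq:coschange} to replace the phase $2b\sqrt{2n+1}$ by $2b\sqrt{2n}$ at the cost of a term $-\frac{b}{\sqrt{2n}}\sin 2b\sqrt{2n}$. This is exactly the computation already displayed in \eqref{eq:a2eeval}: the output is a leading $1/\sqrt{2n}$-term proportional to $1 + (-1)^n \cos 2b\sqrt{2n}$ and a $1/n$-correction proportional to $(-1)^{n+1} b(1 - \tfrac{b^2}{3}) \sin 2b\sqrt{2n}$. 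Doubling yields the $\chi(n)/\sqrt{2n}$ contribution and supplies one half of the $(-1)^{n+1}\zeta(n)/n$ contribution.

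For $T_2(n)$, Corollary~\ref{cor:et2n} already supplies
\begin{equation*}
T_2(n) = \frac{\mysteryletterB(n)}{n} + \orderasymp{\frac{\log n}{n^{3/2}}}, \qquad \mysteryletterB(n) = \frac{1}{2\pi}\Bigl[(-1)^{n+1}\sin 2b\sqrt{2n} - \tfrac{1}{2}\sin 4b\sqrt{2n}\Bigr].
\end{equation*}
The two pieces of $\mysteryletterB(n)$ arise respectively from multiplying the leading $\frac{1}{\pi\sqrt{2n}}$ part of $2a_n^2$ by $\evencasesig(n) \sim \frac{(-1)^{n+1}}{2}\frac{\sin 2b\sqrt{2n}}{\sqrt{2n}}$, and from the cross term $\cos 2b\sqrt{2n} \cdot \sin 2b\sqrt{2n} = \tfrac{1}{2}\sin 4b\sqrt{2n}$ produced when the $(-1)^n \cos 2b\sqrt{2n}$ part of $2a_n^2$ hits $\evencasesig(n)$. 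Factoring out $(-1)^{n+1}/n$ from both pieces and merging with the $\zeta$-contribution from $T_1$ then exhibits the coefficients $\chi$, $\zeta$, $\omega$ of the stated asymptotic.

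The main obstacle is not analytic but combinatorial: all the delicate estimates are already packaged inside Corollary~\ref{cor:et2n} and \eqref{eq:a2eeval}. What remains is to track signs $(-1)^n$ versus $(-1)^{n+1}$ through the products, apply the identity $2 \sin \theta \cos \theta = \sin 2 \theta$ carefully, and correctly distribute the resulting $1/n$-order contributions between $\zeta(n)$ and $\omega(n)$ so as to isolate the prescribed factor $(-1)^{n+1}$ in front of the combined $1/n$-term. One mild caveat is that Corollary~\ref{cor:et2n} is stated for $n$ even and must be supplemented by the parity adjustment from Section~\ref{subsect:oddcomplete}, an entirely routine modification.
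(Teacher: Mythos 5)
Your strategy is the paper's own: combine \eqref{eq:lamemidest} with $T_1 = 2a_n^2$ from \eqref{eq:a2eeval} and $T_2 = 2a_n^2\evencasesig(n)$ from Corollary~\ref{cor:et2n}, sort by powers of $n$, and apply the double-angle identity. But your bookkeeping has several slips, and none of them are harmless. First, $T_1 = 2a_n^2$ already supplies the \emph{entire} $(-1)^{n+1}\zeta(n)/n$ contribution --- its $1/n$-term is $\frac{(-1)^{n+1}b}{\pi n}\bigl(1-\tfrac{b^2}{3}\bigr)\sin 2b\sqrt{2n}$, not ``one half of'' $\zeta$. Second, the leading part of $2a_n^2$ is $\frac{2}{\pi\sqrt{2n}}\bigl[1+(-1)^n\cos 2b\sqrt{2n}\bigr]$, not $\frac{1}{\pi\sqrt{2n}}[\cdots]$ as used in your $T_2$ discussion. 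Third --- and this is the genuine gap --- once these are corrected, the product $2a_n^2\cdot\evencasesig(n)$ gives $T_2 = \frac{(-1)^{n+1}}{2\pi n}\sin 2b\sqrt{2n} - \frac{1}{4\pi n}\sin 4b\sqrt{2n} + \orderasymp{\log n / n^{3/2}}$, so factoring $(-1)^{n+1}/n$ out of the $1/n$-order part of $T_1+T_2$ yields the coefficient
\begin{equation*}
\frac{b}{\pi}\Bigl(1-\frac{b^2}{3}\Bigr)\sin 2b\sqrt{2n} \;+\; \frac{1}{2\pi}\sin 2b\sqrt{2n} \;+\; \frac{(-1)^n}{4\pi}\sin 4b\sqrt{2n},
\end{equation*}
while the leading $1/\sqrt{2n}$-coefficient is $\frac{2}{\pi}\bigl[1+(-1)^n\cos 2b\sqrt{2n}\bigr]$. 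Neither agrees with the displayed $\chi(n)=\frac{4}{\pi}[\cdots]$ nor with $\omega(n)=\frac{1}{8}+(-1)^n\frac{1}{2}\sin 4b\sqrt{2n}$, and the $b^3$ inside $\zeta(n)$ should be $b^2$ to match \eqref{eq:a2eeval}. A careful writeup must either derive these corrected constants or explain where the printed ones come from; presenting the final rearrangement as a routine sign-chase hides the fact that the theorem's displayed coefficients do not follow from the paper's own \eqref{eq:a2eeval} and Corollary~\ref{cor:et2n}.
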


\subsection{}  Of course, if 
\begin{equation}
W(x) = tv^e(x) \label{eq:wscaleddef} 
\end{equation}
then
\begin{equation}
T_j(n; W) = t^j T_j(n; v^e) \, , \label{eq:tjscaled}
\end{equation}
and
\begin{equation}
\begin{split}
\lambda_n(W) = (2n + 1) + t T_1(n; v^e) + t^2 T_2(n; v^e) + \orderasymp{\frac{\log n}{n^{3/2}}}
\end{split} \label{eq:lamformscaled} 
\end{equation}
and the term ``of order $\disp \frac{1}{n}$'' in $T_1$ comes with the coefficient $t$ although the term\\ ``of order $\disp \frac{1}{n}$'' in $T_2$ comes with the coefficient $t^2$.  This observation implies the following.  
\begin{cor}
If the potential is \eqref{eq:wscaleddef} then 
\begin{equation}
\lambda_n  = (2n + 1) + \frac{t}{\sqrt{2n}} \chi(n) + \frac{1}{n} \left[t \zeta(n) + t^2\omega(n) \right] + \orderasymp{\frac{\log n}{n^{3/2}}} , \label{eq:lamescaledasymp}
\end{equation}
with $\chi$, $\zeta$, $\omega$ defined in \labelcref{eq:chidef,eq:zetatruedef,eq:omegatruedef}. 
\end{cor}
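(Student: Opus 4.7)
The corollary is essentially a homogeneity statement: under the substitution $v^e \mapsto t v^e$ each term $T_j$ in the perturbative expansion \eqref{eq:lamdiff} scales by exactly $t^j$, so the result should follow from Theorem~\ref{thm:evenasymp} by tracking these scaling factors through the expansion.

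The first step is to establish $T_j(n; tv^e) = t^j T_j(n; v^e)$ for every $j \geq 1$. This is immediate from the definitions \eqref{eq:tjdef} and \eqref{eq:ujrevise}: $U_j$ is a word with exactly $j$ factors of $W$, so replacing $W$ by $tW$ pulls a single factor $t^j$ outside the contour integral and the trace. Consequently \eqref{eq:tjscaled} holds term by term.

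Next, I would apply Corollary~\ref{cor:lamest} with $\alpha = 1/4$ and $q = 2$ to the perturbation $tv^e$, obtaining
\[
\lambda_n = (2n+1) + t\, T_1(n; v^e) + t^2\, T_2(n; v^e) + r(n).
\]
The crude bound $|r(n)| = O((\log n/\sqrt{n})^3)$ coming straight from \eqref{eq:rqest} is too weak for our purposes. To sharpen it to $O(\log n / n^{3/2})$ I would split $r(n) = t^3 T_3(n; v^e) + \sum_{j \ge 4} t^j T_j(n; v^e)$, invoke the improved bound $|T_3(n; v^e)| = O(1/n^{3/2})$ established in \eqref{eq:t3leading}, and control the tail $\sum_{j \ge 4} t^j T_j$ by the geometric decay in \eqref{eq:ujsumnorm}, which yields $O((\log n/\sqrt n)^4) = O(\log n / n^{3/2})$. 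The implicit constants depend on $|t|$, which is acceptable since $t$ is a fixed parameter.

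Finally, I would substitute the explicit asymptotics already computed in Section~6: $T_1(n; v^e) = 2a_n^2$, expanded via \eqref{eq:a2eeval}, contributes the leading $\chi(n)/\sqrt{2n}$ term and a $\zeta(n)/n$ correction (with the parity sign absorbed as in the statement of Theorem~\ref{thm:evenasymp}), while Corollary~\ref{cor:et2n} supplies the $\omega(n)/n$ contribution from $T_2(n; v^e)$. Multiplying by $t$ and $t^2$ respectively and collecting terms produces \eqref{eq:lamescaledasymp}. There is no real analytical obstacle; the only care needed is bookkeeping the powers of $t$ across the $1/\sqrt{n}$ and $1/n$ levels of the expansion, since the $\zeta$-contribution originates in the first-order term $T_1$ (hence scales as $t$) whereas the $\omega$-contribution originates in $T_2$ (hence scales as $t^2$), which is the central observation the corollary records.
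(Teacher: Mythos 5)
Your proposal is correct and follows essentially the same path the paper takes: the scaling identity $T_j(n;tv^e)=t^jT_j(n;v^e)$ from \eqref{eq:tjscaled}, the reduction to the first two $T_j$'s with the remainder controlled via the sharpened bound $T_3(n)=O(1/n^{3/2})$ of \eqref{eq:t3leading} together with the geometric tail decay, and finally the insertion of the explicit asymptotics for $T_1$ and $T_2$ from \eqref{eq:a2eeval} and Corollary~\ref{cor:et2n}. The paper presents this only as a brief observation before stating the corollary, so your write-up merely supplies the bookkeeping the paper leaves implicit; there is no substantive divergence.
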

Finally, we are ready to get asymptotic of eigenvalues in the case of any two-point $\bracepair{\pm b}$ $\delta$-potential.  
\begin{thm} \label{thm:gen2ptpot} 
Let 
\begin{equation}
w(x) = tv^e(x) + sv^o(x), \quad t, s \in \CC. \label{eq:gen2ptpot}
\end{equation}
Then 
\begin{equation}
L = - \frac{d^2}{dx^2} + x^2 + w(x) \label{eq:elllast}
\end{equation}
has a discrete spectrum, all its eigenvalues $\bracepair{\lambda_n}$ in the half-plane 
\[
H(N_*) = \bracepair{z: \Re z \geq 2 N_*} 
\]
are simple, and 
\begin{equation}
\begin{split}
\lambda_n = (2n + 1) + \frac{t}{\sqrt{2n}} \chi(n) + \frac{1}{n} \left[ t \zeta(n) + (t^2 + s^2) \omega(n) \right] + \orderasymp{\frac{\log n}{n^{3/2}}} 
\end{split} \label{eq:lamlastdecomp}
\end{equation}
\end{thm}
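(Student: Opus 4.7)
The plan is to treat $W = tv^e + sv^o$ as a single perturbation satisfying \eqref{eq:wcond} with $\alpha = 1/4$ and $C_0$ comparable to $\vert t\vert + \vert s \vert$, and then to expand $\lambda_n$ via Corollary~\ref{cor:lamest}. By Proposition~\ref{prop:normbds}, the threshold $N_*$ prescribed by \eqref{eq:nstarnewdef} (with $\vert s\vert$ replaced by $\vert t \vert + \vert s \vert$) guarantees that each $\lambda_n$ with $n \geq N_*$ is simple and lies in $\mathcal{D}_n$, and that the representation $\lambda_n = (2n+1) + T_1(n;W) + T_2(n;W) + T_3(n;W) + O((\log n)^4/n^2)$ is valid.

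The main step is to decompose each $T_j(n;W)$ into its $v^e$ and $v^o$ pieces by multilinearity of the trace integrand in $W$. By \eqref{eq:orthooddeven} and \eqref{eq:orthoevenodd}, the matrix elements $(v^e)_{jk}$ and $(v^o)_{jk}$ have disjoint supports: the former vanishes when $j+k$ is odd and the latter when $j+k$ is even. Consequently, in any product $\prod_{\iota=0}^{j-1} W(g_\iota,g_{\iota+1})$ arising in \eqref{eq:ugdef}, each factor is unambiguously either a $v^e$-factor (forcing $g_{\iota+1}-g_\iota$ even) or a $v^o$-factor (forcing $g_{\iota+1}-g_\iota$ odd). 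The closure $g_0 = g_j = m$ forces $\sum_\iota(g_{\iota+1}-g_\iota) = 0$, so the number of $v^o$-factors must be even. For $j = 1$ this forces all factors to be $v^e$, giving $T_1(n;W) = t\,T_1(n;v^e) = 2t\,a_n^2$ by \eqref{eq:t1even} and Lemma~\ref{lem:tjodd}. For $j = 2$ only zero or two $v^o$-factors are allowed, hence $T_2(n;W) = t^2 T_2(n;v^e) + s^2 T_2(n;v^o)$, with the $ts$ cross term vanishing identically.

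Explicit formulas now assemble directly. The expansion \eqref{eq:a2eeval} of $2a_n^2$ splits $T_1(n;W) = 2t a_n^2$ into the leading $t\chi(n)/\sqrt{2n}$ and the subleading $t\zeta(n)/n$ pieces, with residue $O(t/n^{3/2})$. By Corollary~\ref{cor:et2n} and the computation for $v^o$ already carried out in \eqref{eq:t2repeatresolve}--\eqref{eq:helperterm3}, both $T_2(n;v^e)$ and $T_2(n;v^o)$ agree modulo $O(\log n / n^{3/2})$, so $T_2(n;W) = (t^2+s^2)\omega(n)/n + O(\log n/n^{3/2})$. Combining these with the $T_1$ expansion yields the claimed formula \eqref{eq:lamlastdecomp}.

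The remaining technical obstacle is to absorb $T_3(n;W)$ and the tail $\sum_{j\geq 4} T_j(n;W)$ into the error $O(\log n/n^{3/2})$. The tail is handled by the geometric bound \eqref{eq:ujsumnorm}, which gives $\sum_{j\geq 4}\vert T_j(n;W)\vert = O((\log n)^4/n^2)$. For $T_3$, the parity rule permits only the monomials $t^3$ and $t s^2$; the $t^3$-piece is exactly $T_3(n;v^e)$ and is controlled by \eqref{eq:t3leading}, while the $t s^2$-piece arises from paths using one $v^e$-step and two $v^o$-steps. The hard part is verifying that the $\tau'$-type residue sums produced by this mixed configuration obey the same $O(1/n^{3/2})$ bound as in \eqref{eq:tau3peval1}; this reduces, in the spirit of Lemma~\ref{lem:tauomegadiff}, to an $O(\log n/n^{3/2})$ comparison between the parity-restricted sums that arise when one $v^e$-slot is inserted into an otherwise $v^o$-path. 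Once this comparison is in place, the bound $T_3(n;W) = O(1/n^{3/2})$ holds uniformly in $(t,s)$ and the theorem follows.
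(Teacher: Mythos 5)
Your proposal follows essentially the same route as the paper: decompose $T_1(n;W)$ and $T_2(n;W)$ by multilinearity, kill the $ts$ cross-term in $T_2$ using the parity disjointness \eqref{eq:orthooddeven}--\eqref{eq:orthoevenodd} of $v^e$ and $v^o$, and then read off the coefficients $\chi,\zeta,\omega$ from the pure $v^e$ and $v^o$ computations already in hand. Your extra remark that the mixed $ts^2$ part of $T_3(n;W)$ still has to be shown to be $\orderasymp{\log n / n^{3/2}}$ (the generic bound from Corollary~\ref{cor:tjnorm} with $\alpha=\tfrac14$ only gives $(\log n)^3/n^{3/2}$, and the paper's printed proof of this theorem treats only $T_1$, $T_2$, and the vanishing cross-term $T''$) is a genuine loose end in the paper's argument, and your sketch of closing it by a parity-restricted comparison in the spirit of Lemma~\ref{lem:tauomegadiff} and \eqref{eq:tau3peval1} is the right idea.
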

\begin{proof}
$T_1$ is a linear function on $w$'s, so 
\begin{equation}
\begin{split}
T_1(n; W) &= t T_1(n; v^e) + s T_1(n; v^e) \\
& = t T_1(n; v^e).
\end{split} \label{eq:t1.2pt}
\end{equation}
This explains two terms with $t$-coefficient in \eqref{eq:lamlastdecomp}.  Next,
\begin{equation}
\begin{split}
T_2(n; W) &= \tra \frac{1}{2\pi i} \int\limits_{\bd{\mathcal{D}_n}} (z - z_n) R^0 W R^0 W R^0 \, dz\\
&= t^2 T_2(n; v^e) + s^2 T_2(n; v^0) + tsT^{\prime \prime} \quad \text{where}
\end{split} \label{eq:t2.2pt} 
\end{equation}
\begin{equation}
\begin{split}
T^{\prime \prime} = \tra \frac{1}{2\pi i} \int\limits_{\bd{\mathcal{D}_n}} (z - z_n) \left[ R^0 v^e R^0 v^o R^0 + R^0 v^o R^0 v^e R^0 \right] \, dz = 0.
\end{split} \label{eq:t2.2pt.b}
\end{equation}
Indeed 
\begin{equation}
\begin{split}
v^e_{mk} & = 0 \quad \text{if } m - k \text{ odd}\\
\text{and} \quad v_o^{kj} & = 0 \quad \text{if } k - j \text{ even}
\end{split} \label{eq:vdecomp}
\end{equation}
So in \eqref{eq:t2.2pt.b}
\[
v^e_{mk} v^o_{km} = 0 \quad \text{and} \quad v^o_{mj}v^e_{jm} = 0
\]
for any $\bracepair{m, k}$ or $\bracepair{m, j}$ because at least one factor is zero.  
\end{proof}
\section{Comments; miscellaneous} \label{sec:comment}  
The technical estimates of Sections~\ref{sec:manyineqs} to \ref{sec:etabdpt2} [mostly Section ~\ref{sec:etabdpt1}] lead to asymptotics of the eigenvalues in the case of perturbations \eqref{eq:wtype2}, at least, if the number of point interactions is finite, i.e., 
\begin{equation}
w(x) = \sum_{j = 1}^J c_j \pointmass{x - b_j}, \quad J < \infty. \label{eq:pointmassrenew}
\end{equation}
Of course, if $K = 2$ and $b_2 + b_1 = 0$ the interactions are well balanced and many terms in the asymptotic formulas vanish.  But for any set of points $(b_k)$ and coupling coefficients $(c_k)$ we could give general formulas if we know the values of $\bracepair{T_j(n; w)}$.  Let us do this exercise for $j = 1, \, 2$.  
\subsection{Case \texorpdfstring{$b = 0$, $c = 2$}{b = 0, c = 2}; i.e., \texorpdfstring{$v_*(x) = 2 \pointmass{x}$}{v(x) = 2 delta(x)}}  We put $c = 2$ to make possible a direct comparison with the case of perturbations $v^e$ in \eqref{eq:pointeven}.  It seems we could use Thm.~\ref{thm:evenasymp} and its formulas to claim Prop.~\ref{prop:singleasymp} (see below).  But in estimates of Sections \ref{sec:manyineqs}--\ref{sec:etabdpt2} we often used an assumption $b \neq 0$; see for example, inequalities \eqref{eq:rho11bd}, \eqref{eq:es2absbd}; \eqref{eq:rho17bd}, \eqref{eq:q1q1pdiff}; \eqref{eq:q2q2pdiff}, \eqref{eq:rho25bd} which are important in evaluation of the error-term \eqref{eq:rhobd} or its analogues.  However, if $b = 0$ we do not face such terms and evaluation is straightforward.

Indeed, by \eqref{eq:hefcnform} if $w = v_*$
\begin{align}
a_n &= h_n(0) = \frac{2^{1/4}}{\pi^{1/2}} \cdot \frac{1}{n^{1/4}} \cdot \cos \parenpair{n \, \frac{\pi}{2}} + \orderasymp{\frac{1}{n^{5/4}}} \label{eq:an0est} \\
\intertext{and}
a_k^2 & = \begin{cases} \displaystyle \frac{2}{\pi} \cdot \frac{1}{\sqrt{2k}} + \orderasymp{\frac{1}{k^{3/2}}}, & k > 0 \text{ even} \\
0, & k \text{ odd.}\end{cases} \label{eq:an02estmain}
\end{align}
Therefore, if $n \geq N_*$,
\begin{equation} \label{eq:t10decomp}
T_1(n; v_*) = 2 a_n^2 = \begin{cases} \displaystyle \frac{4}{\pi} \cdot \frac{1}{\sqrt{2n}} + \rho_{30}(n), & n \text{ even} \\
0, & n \text{ odd}.  \end{cases}
\end{equation}
where
\begin{equation} \label{eq:rho30bd}
\vertpair{\rho_{30}(n)} \leq \frac{C}{n^{3/2}}.
\end{equation}
Next term by \eqref{eq:t2resolve} is
\begin{equation} \label{eq:t20decomp}
T_2(n) = 2 a_n^2 \cdot \widetilde{\sigma}(n), 
\end{equation}
where
\begin{equation} \label{eq:varsigmarednew}
\widetilde{\sigma}(n) =  \sideset{}{'}\sum_{k = 0}^{\infty} \frac{a_k^2}{n - k}.
\end{equation}
The subsum $\widetilde{\sigma}_{\text{odd}}$ by \eqref{eq:t10decomp} and Remark~\ref{rem:sbds}, $\displaystyle \beta = \frac{3}{2} > 1$,
\begin{equation} \label{eq:sigrenewoddbd}
\widetilde{\sigma}_{\text{odd}} = \orderasymp{\frac{1}{n}}, 
\end{equation}
and with the same remarks
\begin{equation} \label{eq:sigrenewevendecomp}
\widetilde{\sigma}_{\text{even}} = \frac{2}{\pi} \sum_{\substack{k = 1 \\ k \text{ even} \\ k \neq n}}^{\infty} \frac{1}{\sqrt{2k}} \cdot \frac{1}{n - k} + \orderasymp{\frac{1}{n}}.
\end{equation}
Now we refer to Lemma~\ref{lem:zetabds}, \eqref{eq:zetabd}, and Lemma~\ref{lem:tauomegadiff}, \eqref{eq:tauomega0diff}, to claim
\begin{equation}\label{eq:sigrenewevenbd}
\widetilde{\sigma}_{\text{even}} = \orderasymp{\frac{1}{n}}.
\end{equation}
This leads us to the following.
\begin{lem} \label{lem:t10complete} If $w(x) = 2 \pointmass{x}$
\begin{enumerate}[label = (\roman*)]
\item \begin{equation} \label{eq:t10decomprepeat}
T_1(n; v_*) = \begin{cases} \displaystyle \frac{4}{\pi} \cdot \frac{1}{\sqrt{2n}} + \rho_{30}(n) , & n \text{ even}, \\
0, & n \text{ odd}.  \end{cases}
\end{equation}
where
\begin{equation} \label{eq:rho30bdrepeat}
\vertpair{\rho_{30}(n)} \leq \frac{C}{n^{3/2}},
\end{equation}
and
\item 
\begin{equation} \label{eq:t20bd}
T_2(n) = \orderasymp{\frac{1}{n^{3/2}}}
\end{equation}
\end{enumerate}
\end{lem}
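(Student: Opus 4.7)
The plan is to reduce both parts to direct consequences of the Hermite parity identity $h_n(0) = 0$ for odd $n$, combined with the trace formulas for the single point mass at the origin and the cancellation estimates already established in Lemmas~\ref{lem:zetabds} and \ref{lem:tauomegadiff}. Since $v_*(x) = 2\delta(x)$ is concentrated at $x = 0$, the matrix elements are $(v_*)_{jk} = 2 a_j a_k$ with $a_n = h_n(0)$. The asymptotic \eqref{eq:hefcnform} at $x = 0$ collapses to $h_n(0) = \frac{2^{1/4}}{\pi^{1/2}} n^{-1/4}\cos(n\pi/2) + O(n^{-5/4})$, hence $a_n^2 = \frac{2}{\pi\sqrt{2n}} + O(n^{-3/2})$ for even $n$, while $a_n = 0$ exactly for odd $n$ by parity of the Hermite polynomials.

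For Part~(i), \eqref{eq:t1even} (or a direct computation) gives $T_1(n; v_*) = (v_*)_{nn} = 2 a_n^2$. Substituting the asymptotics above immediately yields $0$ for odd $n$ and $\frac{4}{\pi\sqrt{2n}} + \rho_{30}(n)$ with $|\rho_{30}(n)| \leq C n^{-3/2}$ for even $n$.

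For Part~(ii), expanding the trace as in Section~\ref{sec:2pt} gives $T_2(n; v_*) = 2 a_n^2\, \widetilde{\sigma}(n)$ with $\widetilde{\sigma}(n) = \sum'_{k \neq n} a_k^2/(n-k)$, which is exactly \eqref{eq:varsigmarednew}. If $n$ is odd then the single factor $a_n^2 = 0$ forces $T_2(n) = 0$. For $n = 2p$ even, only even $k \neq n$ contribute (odd $k$ vanish identically); writing $k = 2m$ and using $a_{2m}^2 = \frac{\sqrt{2}}{\pi\sqrt{2m}} + O(m^{-3/2})$, the $O(m^{-3/2})$ tail contributes $O(1/p)$ via Remark~\ref{rem:sbds} with $\beta = 3/2$, and the main piece is a constant multiple of the sequence $\tau_0(p)$ of \eqref{eq:tau0def}. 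Lemma~\ref{lem:tauomegadiff}(a) yields $|\tau_0(p) - \omega_0(p)| = O(\log p / p^{3/2})$, and $\omega_0(p)$ is a constant multiple of the sequence $\xi(\cdot)$ in \eqref{eq:zetarenew} whose $O(1/n)$ decay is Lemma~\ref{lem:zetabds}. Combining, $\widetilde{\sigma}(n) = O(1/n)$, hence $T_2(n) = 2 a_n^2 \cdot O(1/n) = O(n^{-3/2})$.

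The decisive step is the nontrivial $O(1/n)$ cancellation for the real sum $\sum 1/(\sqrt{k}(n-k))$, established in Lemma~\ref{lem:zetabds} via the explicit identity $I_1 = I_2$ between the integrals of \eqref{eq:i1def} and \eqref{eq:i2def}; that was the main technical obstacle, and it has already been cleared. The simplification relative to $b \neq 0$ is that the oscillatory factors $\cos(2b\sqrt{2n})$ and $\sin(4b\sqrt{2n})$ of Section~\ref{sec:etabdpt1} collapse to constants at $b = 0$, so none of the delicate Riemann-Lebesgue integration-by-parts arguments used there come into play.
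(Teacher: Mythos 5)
Your proof is correct and follows essentially the same route as the paper: reduce Part (i) to the Hermite parity $h_n(0)=0$ for odd $n$ plus the pointwise asymptotics \eqref{eq:hefcnform}, write $T_2(n)=2a_n^2\widetilde{\sigma}(n)$ as in \eqref{eq:t20decomp}, and control $\widetilde{\sigma}(n)=O(1/n)$ by combining Remark~\ref{rem:sbds}, Lemma~\ref{lem:tauomegadiff}(a), and the $I_1=I_2$ cancellation of Lemma~\ref{lem:zetabds}. Your closing remark about the oscillatory factors collapsing at $b=0$ is exactly the reason the paper gives for redoing this case rather than quoting Theorem~\ref{thm:evenasymp} directly.
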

\begin{proof}
Part (i) has been explained by formulas \eqref{eq:an0est} -- \eqref{eq:rho30bd}.  With $a_n^2$ given by \eqref{eq:t10decomp}
\begin{equation}
a_n^2 = \orderasymp{\frac{1}{\sqrt{n}}},
\end{equation}
and 
\begin{equation}
\widetilde{\sigma} = \orderasymp{\frac{1}{n}}
\end{equation}
by \eqref{eq:sigrenewoddbd} and \eqref{eq:sigrenewevenbd}.  Therefore, their product \eqref{eq:t20decomp} is $\displaystyle \orderasymp{\frac{1}{n^{3/2}}}$, i.e., \eqref{eq:t20bd} holds.
\end{proof}
Finally, the term $T_3(n)$ --- see \eqref{eq:tau3pdef},\eqref{eq:et3even} or \eqref{eq:t3even} --- is by \eqref{eq:tau3pdef} -- \eqref{eq:t3leading} and the formulas of the previous subsection is  $\disp \leq C \left[ \frac{1}{\sqrt{n}} \cdot \parenpair{\frac{1}{n}}^2 + \frac{1}{n} \cdot \frac{1}{\sqrt{n}} \right]$
so
\begin{equation} \label{eq:t30bd}
\vertpair{T_3(n)} \leq \frac{C}{n^{3/2}}.
\end{equation}
We have proven the following.
\begin{prop} \label{prop:singleasymp}
If $w = tv_* = 2t\pointmass{x}$ the eigenvalues of the operator \eqref{eq:elldecompintro} for $n \geq N_*$ are 
\begin{align}
\lambda_n & =   (2n + 1) + \frac{4t}{\pi} \cdot \frac{1}{\sqrt{2n}} + \orderasymp{\frac{1}{n^{3/2}}}, & n \text{ even}\\
& =  (2n + 1) + \orderasymp{\frac{1}{n^{3/2}}}, & n \text{ odd}.
\end{align}
\end{prop}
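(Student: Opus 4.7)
\textbf{Proof plan for Proposition~\ref{prop:singleasymp}.}  The strategy is a direct combination of Corollary~\ref{cor:lamest} (the Neumann--Riesz expansion truncated to finitely many traces) with the information already collected in Lemma~\ref{lem:t10complete} and inequality \eqref{eq:t30bd}, together with the homogeneity $T_j(n;tv_*)=t^{j}T_j(n;v_*)$ noted in \eqref{eq:tjscaled}.

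First I would check the hypotheses.  Since $v_*=2\pointmass{x}$ and $\vertpair{h_k(0)}\leq C k^{-1/12}$ (see \eqref{eq:hsizebd} with $x=0$, or the sharper \labelcref{eq:hefcnactsmallp,eq:an0est}), the matrix of $W=tv_*$ satisfies \eqref{eq:wcond} with $\alpha=\tfrac14$ and $C_0$ an absolute multiple of $\vert t\vert$.  Hence for $n\geq N_*=N_*(\vert t\vert;\tfrac14)$ prescribed by \eqref{eq:nstarchoice}, Corollary~\ref{cor:lamest} with $q=3$ applies and gives
\[
\lambda_n=(2n+1)+T_1(n;W)+T_2(n;W)+T_3(n;W)+r_3(n),
\]
with the error estimate, from \eqref{eq:rqest},
\[
\vert r_3(n)\vert \leq 2\Bigl(\vert t\vert \,M(\tfrac14)\,\tfrac{\log en}{\sqrt n}\Bigr)^{4}=O\!\parenpair{\tfrac{(\log n)^{4}}{n^{2}}}=o\!\parenpair{\tfrac{1}{n^{3/2}}}.
\]

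Next I would substitute the explicit data for the three remaining traces.  Using $T_j(n;W)=t^{j}T_j(n;v_*)$, part~(i) of Lemma~\ref{lem:t10complete} gives
\[
T_1(n;W)=\begin{cases}\disp \frac{4t}{\pi}\cdot\frac{1}{\sqrt{2n}}+t\,\rho_{30}(n),&n\text{ even,}\\ 0,& n\text{ odd,}\end{cases}
\qquad\vertpair{t\rho_{30}(n)}\leq \frac{C\vert t\vert}{n^{3/2}};
\]
part~(ii) of the same lemma gives $t^{2}T_2(n;v_*)=O(1/n^{3/2})$; and \eqref{eq:t30bd} gives $t^{3}T_3(n;v_*)=O(1/n^{3/2})$.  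Summing these three contributions with $r_3(n)$ produces exactly the formulas claimed in Proposition~\ref{prop:singleasymp}, separated according to the parity of $n$.

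\textbf{Where the work actually sits.}  There is essentially no residual obstacle in the proof of Proposition~\ref{prop:singleasymp} itself: every ingredient has been prepared in the preceding sections, and the proof is really a short bookkeeping exercise.  The only point requiring a moment of care is the dependence of $N_*$ on $t$: rescaling $v_*$ by $t$ multiplies $C_0$ by $\vert t\vert$, so by \eqref{eq:nstarchoice} one must take $N_*\sim(\vert t\vert \log(e\vert t\vert))^{2}$ for the expansion to be legitimate.  Everything delicate --- the cancellation arguments that drive down the $\log$-factors in $T_1$ and $T_2$ and the absence of the $b$-dependent error terms (as the author notes just before the statement, because $b=0$ makes the problematic terms in \eqref{eq:rho11bd}, \eqref{eq:es2absbd}, \eqref{eq:rho17bd}, \eqref{eq:q1q1pdiff}, \eqref{eq:q2q2pdiff}, \eqref{eq:rho25bd} disappear) --- has already been carried out in Lemma~\ref{lem:t10complete} and the preceding discussion of $T_3$, so the proof reduces to invoking those results.
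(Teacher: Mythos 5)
Your proposal matches the paper's own argument: after Lemma~\ref{lem:t10complete} supplies $T_1(n;v_*)$ explicitly and $T_2(n;v_*)=O(1/n^{3/2})$, the paper obtains $\vert T_3(n)\vert\leq C/n^{3/2}$ (its \eqref{eq:t30bd}) and then invokes the truncated Neumann--Riesz expansion of Corollary~\ref{cor:lamest}, which is exactly your bookkeeping with $q=3$. Your explicit use of the homogeneity $T_j(n;tv_*)=t^jT_j(n;v_*)$ and of the dependence $N_*\sim(\vert t\vert\log(e\vert t\vert))^2$ via \eqref{eq:nstarchoice} merely spells out steps the paper leaves implicit; there is no substantive difference.
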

\subsection{}  Let us conclude our example with a simple case of one-point interaction
\begin{equation} \label{eq:w1pt}
w(x) = t \pointmass{x - b}, \quad b \neq 0.
\end{equation}
Again recall that 
\begin{equation}\label{eq:hermfcnformrenew}
\begin{split}
h_k(x) = \frac{2^{1/4}}{\pi^{1/2}} \cdot \frac{1}{k^{1/4}} &\left[ \cos \left( x \sqrt{2k + 1} - k \frac{\pi}{2} \right)  + \right.\\
 &+ \left. \frac{x^3}{6} \frac{1}{\sqrt{2k + 1}} \sin \left( x \sqrt{2k + 1} - k \frac{\pi}{2} \right) + \orderasymp{\frac{1}{k}} \right] 
\end{split}
\end{equation}
and if $x = b$
\begin{equation}\label{eq:ak2fcnformrenew}
\begin{aligned}
h_k(x) &= \frac{2^{1/2}}{\pi} \cdot \frac{1}{\sqrt{k}} \cdot \frac{1}{2} \cdot &&\left[1 + (-1)^k \cos \left(2 b \sqrt{2k + 1} \right)  + \right.\\
& &&+ \left. \frac{b^3}{6} \frac{(-1)^k}{\sqrt{2k + 1}} \sin \left(2 b \sqrt{2k + 1} \right) + \orderasymp{\frac{1}{k}} \right] \\
&= \frac{1}{\pi} \cdot \frac{1}{\sqrt{2k}} \cdot  &&\left[1 + (-1)^k \cos \left(2 b \sqrt{2k} \right)  - \right.\\
& &&- \left. \frac{b^3}{6} \frac{(-1)^k}{\sqrt{2k}} \cdot b\left( 1 - \frac{b^2}{3} \right) \sin \left(2 b \sqrt{2k } \right) + \orderasymp{\frac{1}{k}} \right]
\end{aligned}
\end{equation}
Then
\begin{align} 
T_1(n) & = W_{nn} = t a_n^2 \label{eq:w1t1} \\
\intertext{and}
T_2(n) & = \frac{t^2}{2} a_n^2 \sum_{k = } \frac{a_k^2}{n - k} \label{eq:w1t2} 
\end{align}
Four terms in brackets on the right side of \eqref{eq:ak2fcnformrenew} give impacts into 
\begin{equation}\label{eq:sigmabardef}
\overline{\sigma}(n) = \sideset{}{^{\prime}} \sum_{k = 0}^{\infty} \frac{a_k^2}{n - k} 
\end{equation}
which are evaluated as
\begin{equation}\label{eq:w1basicsum}
\sideset{}{^{\prime}} \sum_{k = 1}^{\infty} \frac{1}{\sqrt{k}} \cdot \frac{1}{n - k} = \orderasymp{\frac{\log n}{n}} \quad \text{by Lemma~\ref{lem:zetabds}, \eqref{eq:zetabd},}
\end{equation}
\begin{equation} \label{eq:w1basicsum2}
\begin{split}
\sideset{}{^{\prime}}\sum_{k = 1}^{\infty} \frac{(-1)^k}{\sqrt{k}} \cdot \frac{\cos 2b \sqrt{2k}}{n - k} & = (-1)^n \left[ \, \sideset{}{^{\prime}}\sum_{\substack{k = 1 \\ k - n \text{ even}}}^{\infty} - \sum_{\substack{k = 1 \\ k - n \text{ odd}}}^{\infty} \, \right]\\
& = (-1)^n \left[c_* \frac{\sin 2b \sqrt{2n}}{\sqrt{2n}} - c_* \frac{\sin 2 b \sqrt{2n}}{\sqrt{2n}} + \orderasymp{\frac{\log n}{n}}\right] \\
& = \orderasymp{\frac{\log n}{n}}
\end{split} 
\end{equation}
by Prop~\ref{prop:es2bds}, \eqref{eq:etarepeat} or \eqref{eq:es2bd}. [It is important that coefficient $c_*$ is the same for subsums over even and odd $k$.]

The third and fourth terms are evaluated just by absolute value so
\begin{equation}\label{eq:thirdterm}
\sideset{}{^{\prime}} \sum_{k = 1}^{\infty} \frac{1}{\sqrt{k}} \cdot \frac{1}{\sqrt{k}} \cdot \frac{1}{\vert n - k \vert} = \orderasymp{\frac{\log n}{n}} \text{ by Remark~\ref{rem:sbds}, }\beta = 1,
\end{equation}
and
\begin{equation}\label{eq:fourthterm}
\sideset{}{^{\prime}} \sum_{k = 1}^{\infty} \frac{1}{\sqrt{k}} \cdot \frac{1}{k} \cdot \frac{1}{\vert n - k \vert} = \orderasymp{\frac{1}{n}} \text{ by Remark~\ref{rem:sbds}, }\beta = \frac{3}{2}.
\end{equation}
The estimates \eqref{eq:w1basicsum} -- \eqref{eq:fourthterm} explain that
\begin{equation} \label{eq:sigmabarbd}
\overline{\sigma}(n) = \orderasymp{\frac{\log n}{n}}
\end{equation}
so by \eqref{eq:w1t2} and \eqref{eq:ak2fcnformrenew}
\begin{equation}\label{eq:w1t2bd}
T_2(n) = \orderasymp{\frac{\log n}{n^{3/2}}}
\end{equation}
We have proven the following
\begin{prop}\label{prop:w1lamform}
If $w = t \pointmass{x - b}$ then the eigenvalues of the operator \eqref{eq:elldecompintro} for $n \geq N_*$ are
\begin{equation}\label{eq:w1lamform}
\begin{split}
\lambda_n = (2n + 1) &+ \frac{1}{\pi \sqrt{2n}} \left[ 1 + (-1)^n \cos (2b \sqrt{2n}) \right] \\
& - \frac{1}{\pi} \frac{(-1)^n}{2n} b \left( 1 - \frac{b^2}{3} \right) \sin (2b \sqrt{2n}) + \orderasymp{\frac{\log n}{n^{3/2}}}.
\end{split} 
\end{equation}
\end{prop}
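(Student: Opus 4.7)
The plan is to apply the general machinery of Section~\ref{sec:evalperturb} to the rank-one perturbation $W_{mk} = t\,a_m a_k$ induced by $w(x) = t\,\pointmass{x-b}$. First, I would invoke Corollary~\ref{cor:lamest} with $q = 2$ and $\alpha = 1/4$ to obtain
\[
\lambda_n = (2n+1) + T_1(n) + T_2(n) + r_2(n), \qquad |r_2(n)| \leq C\parenpair{\tfrac{\log n}{\sqrt n}}^3,
\]
so it suffices to compute $T_1(n)$ explicitly and show that $T_2(n)$ and $r_2(n)$ are absorbed in the error term.

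Next, I would compute $T_1(n) = W_{nn} = t\,a_n^2$ and expand $a_n^2 = h_n(b)^2$ using \eqref{eq:ak2fcnformrenew}, which after collecting the coefficients of $n^{-1/2}$ and $n^{-1}$ produces exactly the two explicit oscillatory terms in the statement. This is a direct substitution once the asymptotic \eqref{eq:hermfcnformrenew} has been squared, with the Taylor expansion $\sqrt{2k+1}=\sqrt{2k}+\tfrac{1}{2\sqrt{2k}}+O(k^{-3/2})$ used to replace $\sqrt{2k+1}$ by $\sqrt{2k}$ in the trigonometric arguments (cf.\ \eqref{eq:cares}--\eqref{eq:coschange}).

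For $T_2(n)$, I would exploit the rank-one factorization: by \eqref{eq:w1t2}, $T_2(n) = \tfrac{t^2}{2}\,a_n^2\,\overline\sigma(n)$ with $\overline\sigma(n) = \sideset{}{'}\sum_k a_k^2/(n-k)$ from \eqref{eq:sigmabardef}. Substituting \eqref{eq:ak2fcnformrenew} splits $\overline\sigma(n)$ into the four subsums \eqref{eq:w1basicsum}--\eqref{eq:fourthterm}. The non-oscillatory piece is $O(\log n/n)$ by Lemma~\ref{lem:zetabds}; the leading oscillatory piece is handled by splitting by parity of $k$, then applying Proposition~\ref{prop:es2bds} to each parity class. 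The key observation, already pointed out in the excerpt around \eqref{eq:w1basicsum2}, is that the coefficient $c_*$ of $\sin(2b\sqrt{2n})/\sqrt{2n}$ emerging from Proposition~\ref{prop:es2bds} is the \emph{same} for the $k - n$ even and $k - n$ odd subsums; since the $(-1)^k$ factor flips sign between them, the leading oscillatory terms cancel, leaving $O(\log n/n)$. The remaining two subsums fall under Remark~\ref{rem:sbds} with $\beta = 1$ and $\beta = 3/2$. Multiplying by $a_n^2 = O(n^{-1/2})$ gives $T_2(n) = O(\log n / n^{3/2})$.

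The main obstacle is the cancellation in the oscillatory subsum \eqref{eq:w1basicsum2}; the matching of constants $c_*$ between parity classes must be verified carefully from Proposition~\ref{prop:es2bds}, since without this cancellation one would only get $O(\log n/\sqrt n)$ and the whole proof would collapse. A secondary, technical issue is sharpening the remainder $r_2(n)$ from the naive bound $O((\log n)^3/n^{3/2})$ supplied by Corollary~\ref{cor:lamest} down to the stated $O(\log n / n^{3/2})$: this can be accomplished by again exploiting the rank-one structure. One checks that $U_j = t^j S(z)^{j-1}\,|R^0 f\rangle\langle R^0 \bar f|$ with $S(z) = \sum_k a_k^2/(z - z_k)$, so that integration by parts gives $T_j(n) = \tfrac{t^j}{j}\cdot\tfrac{1}{2\pi i}\oint_{\bd{\mathcal{D}_n}} S(z)^j\,dz$; residue calculus expresses $T_j(n)$ as a polynomial in $a_n^2$, $\overline\sigma(n)$ and $\tau'(n)$, each of which has already been bounded sharply, yielding $T_j(n) = O(\log n/n^{3/2})$ uniformly for $j \geq 3$ and summing these into $r_2(n)$ without the spurious $(\log n)^3$.
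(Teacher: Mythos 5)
Your treatment of $T_1$ and $T_2$ matches the paper's route exactly: $T_1(n)=ta_n^2$ is expanded via \eqref{eq:ak2fcnformrenew} (after the $\sqrt{2k+1}\to\sqrt{2k}$ Taylor adjustment, cf.\ \eqref{eq:cares}--\eqref{eq:coschange}), and $T_2(n)=\tfrac{t^2}{2}a_n^2\overline\sigma(n)$ is bounded by the four-subsum split \eqref{eq:w1basicsum}--\eqref{eq:fourthterm}, with the decisive cancellation in \eqref{eq:w1basicsum2} hinging, just as the paper stresses, on the $c_*$-coefficient from Proposition~\ref{prop:es2bds} being the same for the two parity classes. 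Where you depart from the paper is in disposing of $\sum_{j\geq3}T_j(n)$: the paper implicitly re-uses the $v^e$-case machinery (Lemma~\ref{lem:jgbehave} with $j=3$ as in \eqref{eq:t3even}--\eqref{eq:t3leading}, then \eqref{eq:ujsumnorm} with $m=4$ for the tail), whereas you propose the rank-one identity $T_j(n)=\tfrac{t^j}{j}\cdot\tfrac{1}{2\pi i}\oint_{\bd{\mathcal{D}_n}}S(z)^j\,dz$ with $S(z)=\sum_k a_k^2/(z-z_k)$, which is a cleaner way of producing the same residue expansion after writing $S=\tfrac{a_n^2}{z-z_n}+G(z)$. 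Two corrections are needed on that step. First, the residue of $S^j$ for $j\geq4$ involves higher derivatives $G^{(m)}(z_n)$ with $m\geq2$, not only the two quantities $\overline\sigma(n)=2G(z_n)$ and $\tau'(n)=-4G'(z_n)$; these higher derivatives are still $\orderasymp{n^{-1/2}}$ by Remark~\ref{rem:sbds}, so no harm results, but the claim that $T_j$ is a polynomial in $a_n^2$, $\overline\sigma$, $\tau'$ alone is literally false for $j\geq 4$. Second, the assertion that $T_j(n)=\orderasymp{\log n/n^{3/2}}$ ``uniformly for $j\geq3$'' cannot be summed over $j$ as written. What you actually need, and what your residue formula readily gives, is the single estimate $T_3(n)=t^3\bigl[a_n^4G'(z_n)+a_n^2G(z_n)^2\bigr]=\orderasymp{n^{-3/2}}$ (from $G(z_n)=\orderasymp{\log n/n}$, $G'(z_n)=\orderasymp{n^{-1/2}}$, $a_n^2=\orderasymp{n^{-1/2}}$), together with the geometric tail $\sum_{j\geq4}\vertpair{T_j(n)}\leq 4\bigl(M(\tfrac14)\tfrac{\log en}{\sqrt n}\bigr)^4=o(n^{-3/2})$ from \eqref{eq:ujsumnorm}. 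With those repairs the argument is complete, and it arguably packages the remainder more transparently than the paper's own exposition, which never discusses $T_3$ in this subsection at all.
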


\subsection{} \label{subsect:extralog} One more technical adjustment should be to cover the case $p = 2$ in Proposition~\ref{prop:imagw}.  If the condition \eqref{eq:wcond} would be changed to 
\begin{equation} \label{eq:wcondadjust}
\vert w_{jk} \vert \leq C_0 \cdot \frac{\log(e + j)}{(1 + j)^{\alpha}} \cdot \frac{\log(e + k)}{(1 + k)^{\alpha}}
\end{equation}
we could follow the lines of Section~\ref{sec:prelims} but instead of \eqref{eq:nuexpr}, \eqref{eq:mudef} we have to evaluate 
\begin{equation} \label{eq:muadjust}
\mu = \sum_{j = 0}^{\infty} \frac{\log(e + j)}{(1 + j)^{2\alpha}} \cdot \frac{1}{\vertpair{z - z_j}}, \quad z \in \bd{\mathcal{D}_n}
\end{equation}
We omit technicalities which mimick steps \eqref{eq:smallvarianceterms} -- \eqref{eq:malphdef}; they certainl lead us to the estimates (compare \eqref{eq:muest}, \eqref{eq:malphdef} )
\begin{equation} \label{eq:muadjustest}
\mu \leq \frac{G(\alpha)}{n^{2\alpha}} \left[ \log (en) \right]^2, \quad 2\alpha < 1.
\end{equation}
If $\disp \alpha = \frac{1}{8}$ by \eqref{eq:somebd}
\begin{equation}\label{eq:wcondbetter}
\vert w_{jk} \vert \leq C_0 \frac{\log (ej)}{j^{1/8}} \cdot \frac{\log(ek)}{k^{1/8}}
\end{equation}
and 
\[
\mu \leq \frac{G^*}{n^{1/4}} \left[ \log (en) \right]^2 .
\]
Therefore, in Proposition~\ref{prop:imagw}, $p = 2$, $N^*$ should be chosen to guarantee [compare \eqref{eq:somebd}]
\begin{gather}
C_0 G^* \nu \frac{(\log en)^2}{n^{1/4}} \leq \frac{1}{2}, \quad \text{or}\\
\frac{c}{2} \nu^{1/2} \frac{\log en}{n^{1/8}} \leq \frac{1}{2}, \quad c^2 = (2 C_0 G^*).
\end{gather}
By \eqref{eq:tfromndef}, $\beta = \frac{1}{8}$, and Corollary~\ref{cor:xtbehave} $N^*$ could be chosen as
\begin{align*}
X_{1/8} \left( \frac{c}{2} \nu^{1/2} \right) &\leq 2^8 \left( \frac{c}{2} \nu^{1/2} \log \left( 8A \nu^{1/2} \frac{c}{2} \right) \right)^8\\
& \leq C_2 \left( \nu( \log e\nu)^2 \right)^4, \, \exists C_2.
\end{align*}
This explains the claim \eqref{eq:newnstar.pmid} in Proposition~\ref{prop:imagw}.

\section*{Acknowledgements}
The author is indebted to Charles Baker and Petr Siegl for numerous discussions.  Without their support this work would hardly be written, at least in a reasonable period of time.  I am also thankful to Daniel Elton, Paul Nevai, and G\"{u}nter Wunner for valuable comments and information related to topics of this manuscript.  
\printbibliography

\end{document}